
\documentclass[english,a4paper]{article}



\usepackage{ifdraft}

\usepackage[english]{babel}
\usepackage{geometry}
\usepackage{makeidx}
\usepackage{tabto}
\usepackage{fancyhdr}

\usepackage{marvosym}
\usepackage{amsfonts}
\usepackage{amssymb}
\usepackage{stmaryrd}
\usepackage{braket}
\usepackage{bbold}
\usepackage{bbm}

\usepackage{braket}
\usepackage{extarrows}
\usepackage{colonequals}

\usepackage{mathtools}
\usepackage{amsmath}
\usepackage{amsthm}
\usepackage{cases}
\usepackage{multirow, bigdelim}
\usepackage{nicefrac}
\usepackage{relsize}

\usepackage{array}
\usepackage{enumerate}
\usepackage{graphicx}
\usepackage{caption}
\usepackage{subcaption}
\usepackage{tikz}
\usepackage{float}
\usepackage{pdfpages}
\usepackage[nopgf,nographicx,nobookmark]{incgraph}
\usepackage{longtable}

\usepackage{hyperref}

\usepackage{tocbasic}
\usepackage{cite}
\usepackage{url}

\usepackage{xcolor}

\usepackage{marginnote} 
\usepackage[colorinlistoftodos]{todonotes}

\usepackage{textcomp}


\allowdisplaybreaks[1]

\graphicspath{ {images/} }


\setcounter{tocdepth}{1}



\usetikzlibrary{fit}
\usetikzlibrary{arrows}
\usetikzlibrary{petri}
\usetikzlibrary{topaths}
\usetikzlibrary{positioning}
\usetikzlibrary{patterns}
\usetikzlibrary{decorations.pathreplacing}
\usetikzlibrary{calc}
\usetikzlibrary{shapes}
\usetikzlibrary{trees}
\usetikzlibrary{intersections}
\tikzset{>=latex'}


\theoremstyle{plain}
\newtheorem{theorem}             {Theorem}[section]
\newtheorem{lemma}      [theorem]{Lemma}
\newtheorem{corollary}  [theorem]{Corollary}

\theoremstyle{definition}
\newtheorem{definition} [theorem]{Definition}

\theoremstyle{definition}
\newtheorem*{lemma*}     {Lemma}
\newtheorem*{claim*}     {Claim}

\theoremstyle{definition}
\newtheorem*{definition*}{Definition}

\newtheorem*{remark*}    {Remark}


\newcommand{\Nat}{\ensuremath{\mathbb{N}}}
\DeclareMathOperator{\dist}{dist}
\DeclareMathOperator{\Aut}{Aut}
\newcommand{\abs}[1]{\left| #1 \right|}

\newcommand{\restr}[2]{
	\ensuremath{
		\left.\kern-\nulldelimiterspace
		#1
		\vphantom{\big|}
		\right|_{#2}
	}
}



\title{Classification of Finite Highly Regular \\Vertex-Coloured Graphs\footnote{The research leading to these results has received funding from the
		European Research Council (ERC) under the European Union’s Horizon 2020
		research and innovation programme (EngageS: grant agreement No.\ 820148).}}
\author{Irene Heinrich, Thomas Schneider, and Pascal Schweitzer\\
TU Kaiserslautern}


\definecolor{darkgreen}{rgb}{0,0.6,0}
\definecolor{darkred}{RGB}{255, 0, 102}
\definecolor{darkblue}{RGB}{51, 204, 204}
\definecolor{darkyellow}{RGB}{155,135,12}
\definecolor{fuchsia}{RGB}{255,0,255}


\hypersetup{breaklinks=true}
\urlstyle{same}
\Urlmuskip=0mu plus 1mu\relax
\bibliographystyle{alpha}

\definecolor{hrefblue}{rgb}{0.5,0.5,1.0}
\definecolor{hrefred}{rgb}{0.5,0,0}
\definecolor{hrefgreen}{rgb}{0,0.5,0}
\definecolor{hrefblue}{rgb}{0,0,0.5}

\hypersetup{colorlinks,linkcolor=hrefred,filecolor=hrefgreen,urlcolor=hrefred,citecolor=hrefblue}

\def\env@matrix{\hskip -\arraycolsep
	\let\@ifnextchar\new@ifnextchar
	\array{*\c@MaxMatrixCols c}}

\makeatletter
\renewcommand*\env@matrix[1][c]{\hskip -\arraycolsep
	\let\@ifnextchar\new@ifnextchar
	\array{*\c@MaxMatrixCols #1}}
\makeatother


\setcounter{MaxMatrixCols}{12}

\DeclareMathOperator{\sylvester}{Slv}

\DeclareMathOperator{\extendedHadamardGraph}{XHdG}

\newcommand{\tinyqed}{\mathbin{\mathsmaller\blacksquare}}

\newcommand{\highlyRegular}[2]{%
	$#1$-HR\textsuperscript{#2}%
}

\newcommand{\highlyRegularnok}{HR}

\newcommand{\tupleRegular}[1]{%
	$#1$-\text{TR}%
}

\newcommand{\ultraRegular}[1]{%
	$#1$-\text{UH}%
}

\newcommand{\infUltraRegular}{
 \text{UH}
}

\newcommand{\firstRedNbGraph}[2]{
	\ensuremath{
		{#1}\left[N^R(#2) \right]
	}
}

\newcommand{\secondRedNbGraph}[2]{
	\ensuremath{
			{#1}\left[R \setminus N^R(#2) \right]
	}
}
\newcommand{\reverseFunction}[1]{
	\ensuremath{
		#1^{-1}
	}
}
\newcommand{\colouring}{
	\ensuremath{
		\chi
	}
}

\newcommand{\redColour}{
	\ensuremath{\text{red}}
}
\newcommand{\blueColour}{
	\ensuremath{\text{blue}}
}


\newcommand{\hadTwelve}{\ensuremath{\mathrm{H}^{12}}}



\newcommand{\disjointUnionCliques}{\ensuremath{sK_{t}}}

\newcommand{\pentagon}{\ensuremath{C_5}}



\newcommand{\rookGraph}[1]{\ensuremath{R_{#1}}}


\newcommand{\comment}[2][]{%
	{%
		\ifthenelse{\equal{#1}{thomas}}%
		{
			\todo[color={blue!50!green!33},size=\tiny]{
			\textbf{T:}~#2}%
		}%
		{}%
		\ifthenelse{\equal{#1}{irene}}%
		{
			\todo[color={yellow!50!red!33},size=\tiny]{
			\textbf{I:}~#2}%
		}%
		{}%
		\ifthenelse{\equal{#1}{pascal}}%
		{
			\todo[color={blue!50!red!33},size=\tiny]{
			\textbf{P:}~#2}%
		}%
		{}%
	}%
}
\newcommand{\icomment}[2][]{%
	{%
		\ifthenelse{\equal{#1}{thomas}}%
		{
			\todo[inline,color={blue!50!green!33}]{
			\textbf{T:}~#2}%
		}%
		{}%
		\ifthenelse{\equal{#1}{irene}}%
		{
			\todo[inline,color={yellow!50!red!33}]{
			\textbf{I:}~#2}%
		}%
		{}%
		\ifthenelse{\equal{#1}{pascal}}%
		{
			\todo[inline,color={blue!50!red!33}]{
			\textbf{P:}~#2}%
		}%
		{}%
	}%
}


\makeatletter
\newcommand{\pWS}
		   {%
            \@ifnextchar.{}{%
           	  \@ifnextchar,{}{%
           	  	\@ifnextchar!{}{%
           	  	  \@ifnextchar?{}{%
           	  	  	\@ifnextchar:{}{%
           	  	  	  \@ifnextchar;{}{%
           	  	  	  	\@ifnextchar-{}{%
           	  	  	  	  \@ifnextchar){}{%
	       	  	  	  	  	\@ifnextchar'{}{%
							  \@ifnextchar"{}{
							  }
           	  	  	  	  	}
           	  	  	  	  }
           	  	  	  	}
           	  	  	  }
           	  	  	}
           	  	  }
           	  	}
           	  }
            }
           }


\tikzstyle{vertex}=[circle,draw,minimum size=.4mm]
\tikzstyle{empty}=[]
\tikzstyle{edge}=[draw,thick]

\newcommand{\footnotenomark}[2]{%
    \addtocounter{footnote}{1}%
    \footnotetext[\thefootnote]{%
        \addtocounter{footnote}{-1}%
        \refstepcounter{footnote}\label{#1}%
        #2%
    }%
}

\newcommand{\footref}[1]{%
    $^{\ref{#1}}$%
} 
 
\begin{document}

    \maketitle
        
\begin{abstract}
A coloured graph is~$k$-ultrahomogeneous if every isomorphism between two induced subgraphs of order at most~$k$ extends to an automorphism. A coloured graph is~$k$-tuple regular if the number of vertices adjacent to every vertex in a set~$S$ of order at most~$k$ depends only on the isomorphism type of the subgraph induced by~$S$.

We classify the finite vertex-coloured~$k$-ultrahomogeneous graphs and the finite vertex-coloured~$\ell$-tuple regular graphs for~$k\geq 4$ and~$\ell\geq 5$, respectively.
Our theorem in particular classifies finite vertex-coloured ultrahomogeneous graphs, where ultrahomogeneous means the graph is  simultaneously~$k$-ultrahomogeneous for all~$k\in \mathbb{N}$.
\end{abstract}

\section{Introduction}

A structure is \emph{ultrahomogeneous} (UH)\footnote{Some authors use the term homogeneous instead. We use the term ultrahomogeneous to highlight that every and not just some isomorphism between isomorphic substructures must extend to an automorphism.} if every isomorphism between two induced substructures within the structure extends to an automorphism of the entire structure. Ultrahomogeneous structures have been extensively studied in the literature (see related work below), in part because they find important applications in model theory and Ramsey theory in particular due to their construction as 
Fra{\"{\i}}ss{\'{e}} limits (see \cite{MR2800979}). 
In this paper we focus on finite simple graphs.

A graph is~\emph{$k$-ultrahomogeneous} ($k$-UH) if every isomorphism between two induced subgraphs of order at most~$k$ extends to an automorphism. 
Note that a graph is 1-ultrahomogeneous if and only if it is transitive.
Moreover, the 2-ultrahomogeneous graphs are precisely the rank 3 graphs, that is, transitive graphs with an automorphism group that acts transitively on pairs of adjacent vertices and also on pairs of non-adjacent vertices.
Ultrahomogeneity is the same as simultaneous~$k$-ultrahomogeneity for all~$k\in \mathbb{N}$.

Conceptually, all these variants of homogeneity are a form of symmetry of the entire graph. In particular, they constitute a global property of the graph. While this makes the graph truly homogeneous, the property may also be difficult to check algorithmically. In contrast to this, a regularity condition is a local condition that does not necessarily imply global symmetries. 
Following Cameron (see~\cite{cameron:strongly-regular-garphs}), a graph~$G$ is called~\emph{$k$-tuple regular} ($k$-TR)
if for every pair of vertex sets~$S$ and~$S'$ on at most~$k$ vertices that induce isomorphic subgraphs of~$G$, the number of vertices adjacent to every vertex of~$S$ is the same as the number of vertices adjacent to every vertex of~$S'$. 
Note that~$1$-tuple regularity is just ordinary regularity and that~$2$-tuple regularity of a graph precisely means that the graph is strongly regular. Also note that~$k$-regularity is a local property and easily checkable algorithmically.

It is clear from the definition that~$k$-ultrahomogeneity implies~$k$-tuple regularity. Conversely, it is a priori not clear that high regularity should imply any form of ultrahomogeneity. In fact, of course not all graphs that are transitive (i.e.,~1-ultrahomogeneous) are strongly regular (i.e.,~2-tuple regular). However, it turns out that~$(k+1)$-tuple regularity is stronger than~$k$-ultrahomogeneity for~$k\geq 4$.
We are not aware of a direct argument for this phenomenon.
In fact the only available type of argument is via the classifications of the respective classes.

Such classifications are known for~$k$-ultrahomogeneity and~$\ell$-tuple regularity for~$k\geq 2$ and~$\ell\geq 5$, respectively (see Table~\ref{tab:classifiaction}).  Introduced by Sheehan~\cite{Sheehan:SmoothlyEmbeddableSubgraphs}, Gardiner~\cite{Gardiner:HomogeneousGraphs} and independently Gol'fand and Klin~\cite{GolfandKlin1978}  classified all finite ultrahomogeneous simple graphs. They are,
up to complementation, disjoint unions of isomorphic complete graphs, the complements of such graphs, the $3 \times 3$ \emph{rook's graph} and the~5-cycle.
It turns out that~$5$-tuple regular graphs are~$5$-ultrahomogeneous and even~$k$-ultrahomogeneous for all~$k\geq 5$ as shown by Cameron~\cite{cameron:6-transitive-graphs} (and independently but unpublished by Gol'fand). They are therefore ultrahomogeneous. The only known graphs that are~$4$-tuple regular but not~$5$-tuple regular are the Schl\"{a}fi graph, the McLaughlin graph, and their complements, of which only the Schl\"{a}fi graph and its complement are~$4$-ultrahomogeneous. While there are no other~$4$-ultrahomogeneous graphs, it is an open question whether there are other~$4$-tuple regular graphs. Should they exist they must be pseudogeometric graphs (see~\cite{MR3758062} for more information).
 The list of 3-ultrahomogeneous graphs that are not 4-tuple regular contains infinite families and exceptional graphs (see Table~\ref{tab:classifiaction} and Section~\ref{sec:prelims} for descriptions of the graphs). For the  classification of 2-ultrahomogeneous graphs we refer to~\cite{MR818821}.

\begin{table}[h!]\renewcommand{\arraystretch}{1.15}\setlength{\tabcolsep}{4.1pt}
	\centering
	\begin{tabular}{l|c|c|c|c|c}
		&\ultraRegular{3}&\tupleRegular{4}&\ultraRegular{4}&\tupleRegular{5}&\infUltraRegular{}\\
		&\cite{DBLP:journals/jct/CameronM85}&\phantom{[}open\phantom{]}&\cite{buczak1980finite}&\cite{cameron:6-transitive-graphs}{\footref{monochromfootnote}}&\cite{Gardiner:HomogeneousGraphs}{\footref{monochromfootnote}}\\
		\cline{1-6}
		$3\times 3$ rook's graph \rookGraph{3} &\multirow{3}{*}{$\checkmark$}&\multirow{3}{*}{$\checkmark$}&\multirow{3}{*}{$\checkmark$}&\multirow{3}{*}{$\checkmark$}&\multirow{3}{*}{$\checkmark$}\\
		5-cycle $C_5$ &&&&\\
		$\disjointUnionCliques$ $(s,t \geq 1)$ &&&&\\
		\cline{1-6}
		\multirow{2}{*}{Schl\"{a}fi graph} &\multirow{2}{*}{$\checkmark$}&\multirow{2}{*}{$\checkmark$}&\multirow{2}{*}{$\checkmark$}&&\\
		&&&&\\
		\cline{1-6}
		\multirow{2}{*}{McLaughlin graph} &\multirow{2}{*}{$\checkmark$}&\multirow{2}{*}{$\checkmark$}&&&\\
		&&&&\\
		\cline{1-6}
				\multirow{2}{*}{possibly other pseudogeometric graphs} &\multirow{2}{*}{}&\multirow{2}{*}{$\checkmark$}&&&\\	
		&&&&\\
		\cline{1-6}
		Clebsch graph
		&\multirow{5}{*}{$\checkmark$}&&&&\\
		Higman-Sims graph &&&&\\
		$m\times m$ rook's graph~$R_m$ ($m\geq 4$)&&&&\\
		aff.~polar graph $G^\varepsilon(\mathbb{F}_2^{2d})$ ($d\geq 3, \varepsilon\!\in\!\{+,-\}$)&&&&\\
		gen.~quadrangle $G(Q_5^-(q))$ ($q$ prime power)&&&&
	\end{tabular}
	\caption{Classification of finite \ultraRegular{k} graphs with $k \geq 3$ and finite \tupleRegular{\ell} graphs with $\ell \geq 4$ (up to complementation). For the classification of finite \ultraRegular{2} graphs see \cite{MR818821}.}
	\label{tab:classifiaction}
\end{table}
\footnotenomark{monochromfootnote}{The classification of~\tupleRegular{5} was also given by Gol'fand (unpublished, see\cite{cameron:6-transitive-graphs} and~\cite{Klin1988}). The classification of\infUltraRegular{}was also given independently by \cite{GolfandKlin1978}. See also~\cite{Reichard2015, Klin1988}.}

In this paper we are interested in investigating the concepts of~$k$-ultrahomogeneity and~$k$-tuple regularity for finite vertex-coloured graphs. For these, all definitions are precisely the same except that all isomorphisms respect vertex colours. In model theoretic terms this is equivalent to allowing an arbitrary number of unary relations. 

\textbf{Motivation.} While applying the homogeneity and regularity concepts to vertex-coloured graphs may be interesting in its own right, our interest stems from an algorithmic perspective. It can be shown by induction on~$k$ that~$k$-tuple regularity is equivalent to requiring that for each ordered subset~$S$ of~$G$ of at most~$k$ vertices the number of one-vertex extensions of~$S$ of a certain isomorphism type (the type is determined by~$N(v)\cap S$ when~$S$ is extended by~$v$) depends only on the ordered isomorphism type of the induced graph~$G[S]$. From this it can be argued that~$k$-tuple coloured graphs are precisely the graphs for which the~$k$-dimensional Weisfeiler-Leman algorithm stabilizes already on the initial colouring. The Weisfeiler-Leman algorithm, with roots in algebraic graph theory, is a central tool for graph isomorphism testing and automorphism group computations~\cite{KieferThesis}.
In recursive algorithms for the graph isomorphism problem, vertex colourings play an important role. Therefore vertex coloured graphs are the natural graph class to study. The colourings are used to introduce irregularities into the graph and thereby ensure recursive progress~\cite{DBLP:journals/jsc/McKayP14}. For example, in Babai's quasipolynomial time graph isomorphism algorithm this is the case~\cite{DBLP:conf/stoc/Babai16}. The algorithm exploits a local to global approach that is based on the observation that when a graph's regularity is sufficiently high, it will in fact be highly symmetric. This can then be exploited in the algorithm.
For a strengthening of such local to global approaches, 
it is imperative to gain a better understanding under which conditions high regularity implies high symmetry in vertex coloured graphs.

\textbf{Our results.}
In this paper we provide a classification for the~$k$-ultrahomogeneous and the~$\ell$-tuple regular finite vertex coloured graphs for~$k\geq 4$ and~$\ell\geq 5$, respectively. For~$\ell=4$, the proofs also give a classification of~$4$-tuple regular coloured graphs in case there are no other monochromatic~$4$-tuple regular graphs than the Schl\"{a}fi graph, the McLaughlin graph, and their complements.

For the classification result we exploit that there are four simple but generic operations 
that preserve the degree of regularity and homogeneity. Specifically these are complementation between two colour classes or within a colour class, colour disjoint union (i.e.,~disjoint union of graphs whose vertices do not share colours), homogeneous matching extension, and homogeneous blow up. In the homogeneous matching extension, a colour class that forms an independent set is duplicated. The duplicate vertices are all coloured with the same previously unused colour. The duplicate vertices are connected to their originals via a perfect matching. In a homogeneous blow-up, all vertices in a colour class that forms an independent set are replaced by cliques of the same size.
A graph is reduced if it cannot be built from graphs of smaller order using these operations.

Our theorem shows that for~$k\geq 5$ all~$k$-tuple regular graphs are obtained from monochromatic and reduced bichromatic~$k$-tuple regular graphs. The monochromatic graphs are shown in Table~\ref{tab:classifiaction}. For the bichromatic case we prove that the only irreducible graph that is~$k$-tuple regular for some~$k\geq 5$~is the extended Hadamard graph corresponding to the~$2\times 2$ Hadamard Matrix. This  matrix is unique up to equivalence and a Sylvester matrix (see Theorem~\ref{thm:4:reg:irreducible:bichrom} and Table~\ref{tab:classifiaction;bichrom}). 
The graph is ultrahomogeneous and thus~$k$-tuple regular for all~$k$.
The underlying uncoloured graph is isomorphic to the Wagner graph (i.e., the M\"{o}bius ladder on 8 vertices).

Our theorem in particular provides the first classification  of finite vertex-coloured ultrahomogeneous graphs.

\textbf{Organization of the paper.} After proving pointers to related literature (next subsection) and preliminaries (Section~\ref{sec:prelims}) we give a formal definition of the regularity concepts considered in the paper (Section~\ref{sec:homogen:tup:reg:and:regpreserving}). We investigate bichromatic highly regular graphs (Section~\ref{sec:finite-graphs-2colours}) before arguing that in the trichromatic case under certain conditions one of the colour classes must be trivially connected (Section~\ref{sec:finite-graphs-multiple-colours}). We finally assemble the statements to our classification theorems for the multicoloured case (Section~\ref{sec:classfication:thms}), leading to our discussion of future work (Section~\ref{sec:future:work}).

\subsection{Related work}

Regarding high regularity of graphs, the classifications for~$k$-tuple regularity and~$k$-homogeneity are summarized in Table~\ref{tab:classifiaction}. We should remark that some authors use the terms~$k$-isoregular in place of~$k$-tuple regularity. There is also a related concept called the~$t$-vertex condition (see~\cite{DBLP:journals/jct/Reichard00}).

While the literature on~$k$-ultrahomogeneity and~$k$-tuple regularity of graphs is limited, there are numerous publications surrounding ultrahomogeneity. Extensive work has been done on directed graphs, for finite and infinite countable graphs.  
On top of that there are also various 
generalization of the homogeneity concept and classification results towards other structures as well as modified requirements on what precisely should be extendable to automorphisms.   For a general survey with a focus 
on infinite graphs we refer to \cite{MR2800979}.

\textbf{Previous work on coloured graphs.} There are several publications regarding ultrahomogeneity of edge-coloured graphs. 
Lockett and Truss~\cite{truss:homogeneous-coloured-multipartite-graphs} have classified multipartite ultrahomogeneous graphs for which each colour class forms an independent set or a clique.
They also allow multiple edge colours between different parts. Most of the work treats the infinite parts.
Prior to that some special cases had been regarded. In~\cite{DBLP:journals/dm/GoldsternGK96} infinite bipartite graphs are considered and in~\cite{truss:countable-homogeneous-multipartite-graphs} multipartite graphs without edge colourings are studied. In~\cite{RoseThesis} some infinite graphs with two vertex colours are considered, but a finite number of edge colours is allowed for edges with distinctly coloured end vertices.

\textbf{Restricted homogeneity.} Researchers have also considered various variants of homogeneity where the requirement that isomorphisms between substructures are extendable to automorphisms is only asked for certain substructures. For example distance transitive graphs are of this kind (see \cite{DBLP:journals/ejc/Bon07}). Related concepts more akin to tuple regularity are arc-transitivity and are~$k$-transitivity (see\cite{cameron:6-transitive-graphs}).
There are also classification results for~connected homogeneity, where only isomorphisms between connected substructures need to extend (see \cite{Hamann2017}). When this is restricted to substructures of order~$k$ we get~$k$-connected homogeneous graphs, for which there is a recent classification for various cases described in~\cite{DBLP:journals/jcta/DevillersFPZ20}. We also refer to that paper for further pointers to literature. In fact our presentation of 3-ultrahomogeneous graphs follows the one provided there.

\textbf{Other structures and other morphisms.} Beyond graphs, there are numerous classifications of ultrahomogeneous objects. We refer to work of 
Devillers~\cite{DevillersThesis} and Macpherson's survey \cite{MR2800979}.
Another concept that has been studied is homomorphism-homogeneity, where homomorphisms between substructures are required to extend to endomorphisms of the entire structure.
More variants are obtained by considering epimorphisms and monomorphisms. We refer to~\cite{MR2195577,LockettThesis, DBLP:journals/jgt/RusinovS10,  DBLP:journals/dm/LockettT14} for pointers. Combinations of altered concepts are also studied. This leads, for example, to connected-homomorphism-homogeneity~\cite{DBLP:journals/jgt/Lockett15}.

\section{Preliminaries}\label{sec:prelims}
For a set $A$ and a natural number $k \in \mathbb{N}$, we set
$\binom{A}{k} \coloneqq \{B \subseteq A \mid |B| = k\}$.
We denote the identity matrix of rank $r$ by $\mathbb{1}_r$.

\paragraph{Graph basics.}
A (simple) \emph{graph}~$G$ is a pair $(V, E)$ of sets, the \emph{vertices} and \emph{edges}, respectively, with $E \subseteq \binom{V}{2}$ and $V \cap E = \emptyset$. In this paper we are only concerned with finite non-empty graphs, that is, we assume~$V$ is finite and non-empty. 
We refer to the vertex set of $G$ as $V(G)$ and to its edge set as $E(G)$.
We call $G$ \emph{edgeless} if $E(G) = \emptyset$.
An edge $\{u,v\}$ has \emph{end vertices} $u$ and $v$.
We denote the length of a shortest path 
from $u$ to $v$ by $\dist(u,v)$. By~$\overline{G}\coloneqq (V,\binom{V}{2}-E)$ we denote the \emph{complement} of~$G$.

We define the \emph{neighbourhood} of a vertex $u$ in $G$ as $N_G(u) \coloneqq \{v\in V \mid uv \in E\}$.
For $U \subseteq V(G)$, we call the graph $G[U]$ with vertex set $U$ and edge set $E(G)\cap \binom{U}{2}$ the \emph{subgraph induced by $U$.}

Let $G$ and $G'$ be two graphs. A bijective map $\varphi\colon V(G) \to V(G')$ is an \emph{isomorphism} from~$G$ to~$G'$ if for every pair of vertices $u, v \in V(G)$ vertex $u$ is adjacent to $v$ in $G$ if and only if $\varphi(u)$ is adjacent to $\varphi(v)$ in $G'$.
Should an isomorphism exist, then we say that $G$ and $G'$ are \emph{isomorphic} and write $G \cong G'$.
An isomorphism from~$G$ to itself is an \emph{automorphism}.

\paragraph{Connection types of vertex sets.}
Let $R$ and $B$ be disjoint vertex subsets of a graph~$G$.
The sets~$R$ and~$B$ are \emph{homogeneously connected} if either every vertex of~$R$ is adjacent to every vertex of~$B$ or if no edge of $G$ joins a vertex of~$R$ with a vertex of~$B$.
Recall that a \emph{perfect matching} is a subset $M$ of $E(G)$ such that no two edges of $M$ have a common end vertex and every vertex of $V(G)$ is an end vertex of some edge in $M$.
We call $R$ and $B$ \emph{matching-connected} if the edges of $G$ with one end vertex in $R$ and one end vertex in $B$ form a perfect matching of $G[R \cup B]$ or the edges of $\overline{G}$ with one end vertex in $R$ and one end vertex in $B$ form a perfect matching $\overline{G}[R \cup B]$.

\paragraph{Designs.}
A \emph{$t$-$(v,k,\lambda)$-design} is a pair $(P, B)$ where $P$ is a set of \emph{points} with $|P| = v$ and $B \subseteq \binom{P}{k}$ is a multiset of \emph{blocks} with the property that every set of $t$ points is contained in precisely~$\lambda$ blocks.
If $k \in \{0,1,v-1,v\}$, then $(P,B)$ is \emph{degenerate}.
Observe that $(P, B)$ is degenerate if and only if in the corresponding incidence graph the two sets $P$ and $B$ are homogeneously connected or matching-connected.
We frequently use the classic result of Hughes~\cite{Hughes1965} that symmetric~$3$-designs are degenerate. Specifically, \begin{equation}\label{eq: degenerate symm 3-designs}
	\text{if $|B|=v$, then $t \in \{1,2\}$ or $(P,B)$ is degenerate.}
\end{equation}
We refer to~\cite{hughes1988design} as a standard book on design theory.

\paragraph{Regularity and strong regularity.}
Let $d \in \mathbb{N}$.
A graph $G$ of order $n$ is \emph{($d$-)regular} if every vertex of $G$ is of degree $d$.
It is \emph{strongly regular} with parameters $(n, d, \lambda, \mu)$ if additionally every pair of adjacent vertices has $\lambda$ common neighbours and every pair of non-adjacent vertices has $\mu$ common neighbours.
It is well known (see~\cite{Godsil2001}) that a strongly regular graph with parameters $(n, d, \lambda, \mu)$ satisfies
\begin{equation}\label{eq: parameterConditionSRGs}
	d(d-\lambda -1) = (n-d-1)\mu.
\end{equation}
If  both $G$ and its complement are connected, then $G$ is \emph{primitive}.\footnote{Some authors exclude imprimitive graphs in the definition of strong regularity.}
Otherwise, $G$ is \emph{imprimitive}.

\begin{lemma}
	\label{lem: partitionSrgIntoTwoSrgs}
	Let $H$ be a
	strongly regular graph with parameter set $(n, d, \lambda, \mu)$.
	If $(V_1, V_2)$ is a partition of $V(H)$ such that the induced subgraphs $H[V_1]$ and $H[V_2]$ are strongly regular with respective parameter sets $(n_1, d_1, \lambda_1, \mu_1)$ and $(n_2, d_2, \lambda_2, \mu_2)$, then the parameter set of $H[V_1]$ determines the parameter set of $H[V_2]$ as follows:
	\begin{align}
		n_2 &= n - n_1, \label{aln: orderOfSrgPartition}&\\
		d_2 &= d-\frac{(d-d_1) n_1}{n_2},\label{aln: degOfSrgPartitions} \\
		\lambda_2 &= 
		\lambda - \frac{\lambda(d-d_2)}{d_2} + \frac{(\lambda-\lambda_1)d_1n_1}{d_2n_2}~\text{if } d_2 \neq 0,~\text{and}
		\label{aln: lambdaOfSrgPartition} \\
		\mu_2 &= \frac{d_2(d_2-\lambda_2-1)}{(n_2-d_2-1)}~\text{if } d_2 \neq n_2-1. \label{aln: muOfSrgPartition}
	\end{align}
	If $d_2=0$ (respectively $d_2 = n_2-1$), then $\lambda_2$ (respectively $\mu_2$) can be chosen arbitrarily since $H[V_2]$ is empty (respectively complete).
\end{lemma}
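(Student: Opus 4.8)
The plan is to establish the four identities one at a time, in increasing order of difficulty, relying only on double counting of edges and triangles together with the strong regularity of $H$ and of $H[V_1]$, and finally the parameter relation~\eqref{eq: parameterConditionSRGs}.

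The identity~\eqref{aln: orderOfSrgPartition} is immediate since $(V_1,V_2)$ partitions $V(H)$. For~\eqref{aln: degOfSrgPartitions} I would count the edges of $H$ with one end in $V_1$ and one end in $V_2$ in two ways. Since $H$ is $d$-regular and $H[V_1]$ is $d_1$-regular, every vertex of $V_1$ has exactly $d-d_1$ neighbours in $V_2$; symmetrically every vertex of $V_2$ has exactly $d-d_2$ neighbours in $V_1$. Equating the two counts of these cross edges gives $n_1(d-d_1)=n_2(d-d_2)$, which rearranges to~\eqref{aln: degOfSrgPartitions}. The identity~\eqref{aln: muOfSrgPartition} for $\mu_2$ needs no new work: it is exactly the parameter relation~\eqref{eq: parameterConditionSRGs} applied to the strongly regular graph $H[V_2]$, solved for $\mu_2$, which is legitimate precisely when $n_2-d_2-1\neq 0$, that is, $d_2\neq n_2-1$. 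The excluded cases $d_2=0$ and $d_2=n_2-1$ correspond to $H[V_2]$ being edgeless or complete, so that there are no adjacent (respectively non-adjacent) pairs to constrain $\lambda_2$ (respectively $\mu_2$), matching the statement.

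The substantive step is~\eqref{aln: lambdaOfSrgPartition}. Here I would count the triangles of $H$ that have exactly one vertex in $V_1$ and two vertices in $V_2$; call this number $t$. Counting through the edges of $H[V_2]$: each such edge $\{u,u'\}$ lies in $\lambda$ triangles of $H$, of which $\lambda_2$ close up inside $V_2$, so $\lambda-\lambda_2$ have their apex in $V_1$; since $H[V_2]$ has $\tfrac12 n_2 d_2$ edges this yields $t=\tfrac12 n_2 d_2(\lambda-\lambda_2)$. The plan is to obtain a second, independent expression for $t$ from the $V_1$ side and then solve for $\lambda_2$. For this second count I would use the standard fact that in a strongly regular graph the neighbourhood of any vertex spans exactly $\tfrac12 d\lambda$ edges (each of the $d$ neighbours is joined to $\lambda$ others inside the neighbourhood). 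Fixing $w\in V_1$ and splitting $N_H(w)$ into its part in $V_1$ (of size $d_1$, spanning $\tfrac12 d_1\lambda_1$ edges by strong regularity of $H[V_1]$) and its part in $V_2$, and then summing over $w\in V_1$, decomposes $n_1\cdot\tfrac12 d\lambda$ into contributions from the three triangle types meeting $V_1$. Two of these contributions are $t$ (triangles with apex in $V_1$, counted once) and twice the number of triangles with two vertices in $V_1$ and one in $V_2$ (counted once for each of the two possible $V_1$-apices). The latter count I would evaluate directly: each edge of $H[V_1]$ has $\lambda$ common neighbours in $H$ and $\lambda_1$ in $V_1$, hence $\lambda-\lambda_1$ in $V_2$, giving $\tfrac12 n_1 d_1(\lambda-\lambda_1)$ such triangles. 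Equating the two expressions for $t$ and simplifying gives $\lambda_2$; the final cosmetic step, turning the raw expression into the stated form, uses the degree relation $n_1(d-d_1)=n_2(d-d_2)$ from~\eqref{aln: degOfSrgPartitions}.

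The main obstacle, and the only place where care is genuinely required, is the triangle bookkeeping in this second count: one must correctly track that a triangle is recorded once or twice according to whether the apex (the vertex whose neighbourhood is being examined) is uniquely determined, which depends on how its vertices are distributed between $V_1$ and $V_2$. Everything else is double counting, and the closing manipulation is routine once~\eqref{aln: degOfSrgPartitions} is invoked.
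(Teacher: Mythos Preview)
Your proposal is correct and follows essentially the same strategy as the paper: the partition identity for $n_2$, the cross-edge double count for $d_2$, and the parameter relation~\eqref{eq: parameterConditionSRGs} for $\mu_2$ are identical. The only difference is in the triangle count for $\lambda_2$: the paper counts \emph{all} mixed triangles at once via the cross edges (each of the $n_2(d-d_2)$ cross edges lies in $\lambda$ triangles, and every mixed triangle has exactly two cross edges, giving $\tfrac12 n_2(d-d_2)\lambda = \tfrac12 n_1 d_1(\lambda-\lambda_1)+\tfrac12 n_2 d_2(\lambda-\lambda_2)$ directly), whereas you isolate the $(1,2)$-triangles and reach the same equation via the neighbourhood-edge identity $\tfrac12 d\lambda$ summed over $V_1$, then subtract off the all-$V_1$ and $(2,1)$-triangle contributions. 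Both routes are valid double counts arriving at the same linear relation; the paper's is a touch more direct since it bypasses the $\tfrac12 d\lambda$ step and the all-$V_1$ subtraction.
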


\begin{proof}
	Since $(V_1, V_2)$ is a partition of $V(H)$, Equation~\eqref{aln: orderOfSrgPartition} follows.
	
	None of the denominators in Equations~\eqref{aln: degOfSrgPartitions} and \eqref{aln: lambdaOfSrgPartition} is zero since $H$ is primitive and the sets $V_1$ and $V_2$ are non-empty.
	Regarding Equation~\eqref{aln: degOfSrgPartitions} note that both $(d-d_1)n_1$ as well as $(d-d_2)n_2$ count the number of edges  of 
	$H$ that have one end vertex in $V_1$ and one in $V_2$.
	We prove Equation~\eqref{aln: lambdaOfSrgPartition}.
	Double counting the triangles which contain vertices of $V_1$ and $V_2$ yields
	\begin{equation} \label{eq: count triangles}
		\frac{1}{2}(d-d_2)n_2\lambda = \frac{1}{2}n_1d_1(\lambda-\lambda_1) + \frac{1}{2}n_2d_2(\lambda-\lambda_2),
	\end{equation}
	where on the left side, all $(d-d_2)n_2$ edges with one end in $V_1$ and the other end in $V_2$ are considered. Such an edge is contained in $\lambda$ triangles and each desired triangle is counted twice. The summands on the right side correspond to the triangles containing an edge with both ends in $V_1$ or both ends in $V_2$, respectively. Equation~\eqref{aln: lambdaOfSrgPartition} is obtained by rearranging Equation~\eqref{eq: count triangles}.
	Observe that Equation~\eqref{aln: muOfSrgPartition} is a reformulation of Equation~\eqref{eq: parameterConditionSRGs} applied to $H[V_2]$.
\end{proof}

\paragraph{Sporadic strongly regular graphs.}
The \emph{Clebsch graph}, the \emph{Schl\"afli graph},  the \emph{Higman-Sims graph}, and  the \emph{McLaughlin graph} are the unique strongly regular graphs (up to isomorphism) with the parameters $(16,5,0,2)$, $(27, 16, 10, 8)$, $(100, 22, 0, 6)$, and $(275,112,30,56)$, respectively.
See~\cite{Godsil2001} for the uniqueness of the first two graphs, \cite{Gewirtz1969HigmanSimsUnique} and \cite{GoethalsUniqueMcL} for the uniqueness of the Highman-Sims graph and the McLaughlin graph, respectively.

\paragraph{Graph families.}
We denote the \emph{complete graph} of order $t$ by $K_t$ and the \emph{cycle} of order~$t$ by~$C_t$.
The disjoint union of $s$ copies of $K_t$ is denoted by $sK_t$.

Let $m \in \mathbb{N}\setminus \{0\}$.
The $m \times m$ \emph{rook's graph} $\rookGraph{m}$ has vertex set
$\{v_{ij}\mid 1 \leq i,j \leq m  \}$ and edge set $\{v_{ij}v_{i'j'}\mid i=i'~\text{or}~j=j'~\text{and}~(i,j)\neq (i',j')\}$.
For $i \in \{1, \dots, m\}$ we call $\{v_{ij}\mid 1 \leq j \leq m \}$ the \emph{$i$-th row} of $\rookGraph{m}$.
(The \emph{$j$-th column} is defined analogously.)
If a graph $G$ is isomorphic to $\rookGraph{m}$, then we say that $G$ is \emph{an $\rookGraph{m}$}.
The graph  $\rookGraph{m}$ is strongly regular with parameters $(m^2, 2m-2, m-2, 2)$.
Conversely, Shrikhande~\cite{shrikhande1959rook} proved that for every $m\neq 4$, a strongly regular graph with parameters $(m^2, 2m-2, m-2, 2)$ is an $\rookGraph{m}$.
For~$m=4$ there exist exactly two types of strongly regular graphs with parameters $(16, 6, 2, 2)$, the $\rookGraph{4}$ and the Shrikhande graph.
Given a strongly regular graph $G$ with parameter set $(16,6,2,2)$, the following is an easy way to recognize the isomorphism type of~$G$: for $v \in V(G)$, if $G[N_G(v)] \cong 2K_3$, then $G \cong \rookGraph{4}$. Otherwise, $G[N_G(v)] \cong C_6$ and $G$ is a Shrikhande graph.

Suppose $q, d' \in \mathbb{N}_{\geq 1}$ and let $\kappa$ be a quadratic form on $\mathbb{F}_q^{d'}$.
A vector $v\in \mathbb{F}_q^{d'}$ is \emph{singular} if $\kappa(v) = 0$.
A subspace $W \subseteq  \mathbb{F}_q^{d'}$ is \emph{totally singular} if every vector in $W$ is singular. For even dimension~$d'=2d$, with~$d \in \mathbb{N}_\geq 1$, there are up to isometry exactly two types quadratic forms. We say that $(\mathbb{F}_2^{2d}, \kappa)$ \emph{is of type $+$ (respectively of type $-$)} if the maximal totally singular subspaces of $\mathbb{F}_2^{2d}$ are $m$-dimensional (respectively $m-1$-dimensional). 
If $(\mathbb{F}_2^{2d}, \kappa)$ is of type~$\varepsilon \in \{+,-\}$, then the \emph{affine polar graph} $G^\varepsilon(\mathbb{F}_2^{2d})$ has vertex set $\mathbb{F}_2^{2d}$, and two vectors $u, v \in \mathbb{F}_2^{2d}$ are adjacent if and only if $\kappa(u-v) = 0$.

Let $(\mathbb{F}_q^6, \kappa)$ be a quadratic space of type $-$.
The bipartite graph $G(Q_5^-(q))$ has as vertex set all one-dimensional and two-dimensional totally singular subspaces of $\mathbb{F}_q^6$ with respect to $\kappa$, where two subspaces are adjacent whenever one is a subspace of the other. These graphs are examples of generalized quadrangles.

\paragraph{Coloured graphs.} A \emph{colouring} of $G$ is a map $\colouring\colon V(G) \to C$.
The tuple $(G, \colouring)$ is a \emph{coloured graph} with \emph{colours} from $C$.
Note that we allow adjacent vertices to have the same colour.
If  $|\chi(V)| = \ell$, then $G$ is \emph{$\ell$-coloured}.
We call $(G, \colouring)$ \emph{monochromatic}, \emph{bichromatic}, or \emph{trichromatic} if it is 1-coloured, 2-coloured, or 3-coloured, respectively.
Let $(G, \colouring)$ and $(G', \colouring')$ be coloured graphs.
An isomorphism $\varphi\colon V(G) \to V(G')$ is \emph{colour-preserving} if $\colouring'(\varphi(u)) = \colouring(u)$ for each $u \in V(G)$.
For colored graphs, from now on all isomorphisms are required to be color-preserving.
If there exists a (colour-preserving) isomorphism between $V(G)$ and $V(G')$, we say that $(G, \colouring)$ and $(G', \colouring')$ are \emph{isomorphic} and write $(G, \colouring) \cong (G', \colouring')$.
An \emph{automorphism} of $(G, \colouring)$ is a colour-preserving isomorphism from $V(G)$ to $V(G)$.
Suppose $U \subseteq V(G)$.
We call $(G, \colouring)[U] \coloneqq (G[U], \restr{\colouring}{U})$ the \emph{(coloured) subgraph} of $(G, \colouring)$ \emph{induced by $U$}.
The \emph{complement} of a coloured graph $(G, \chi)$ is $(\overline{G}, \chi)$.

\paragraph{Homogeneous blow-ups.}
The homogeneous blow-up operation replaces a colour class forming an independent set by a union of cliques of the same size.
More precisely, let $(H, \chi_H)$ and $(G, \chi_G)$ be coloured graphs.
If there is an independent colour class~$R$ of~$(G, \chi_G)$, then we say that~$(H, \chi_H)$ is a \emph{homogeneous blow-up} of~$(G, \chi_G)$ at $R$ if there exists an integer~$t\geq 2$ so that~$(H, \colouring_H)$ is obtained by replacing each vertex~$r$ of~$R$ in~$G$ by a $t$-clique~$C_r$. The new vertices all obtain the colour in $\colouring(R)$.
Thus the neighbourhood of a new vertex~$r' \in C_r$ in~$H$ of colour~$R$ is~$N_H(r')= N_G(r)\cup (C_r\setminus\{r'\})$.

\paragraph{Hadamard matrices.}
A \emph{Hadamard matrix} is a matrix $H \in \{-1,1\}^{s\times s}$ in which each two distinct rows are orthogonal. Such a matrix has rank~$s$.
If additionally all row sums of $H$ are equal, then $H$ is \emph{regular}.
Two Hadamard matrices  $H_1, H_2 \in \{-1, 1\}^{s \times s}$ are \emph{equivalent} if $H_2$ can be obtained from $H_1$ by a sequence of operations that multiply a subset of the rows and columns with~$-1$ as well as row swaps and columns swaps. We write $H_1 \cong H_2$ to indicate that $H_1$ and $H_2$ are equivalent.
The \emph{Kronecker product} of two matrices $M \in \mathbb{R}^{m\times n}$ and $M' \in \mathbb{R}^{m' \times n'}$ is the matrix $M \otimes M' \in \mathbb{R}^{mm' \times nn'}$ with
$(M \otimes M')_{m'(r-1)+v,n'(s-1)+w} =M_{r,s}M'_{v,w}$.
The \emph{Sylvester matrix} of rank $2$ is \[\sylvester(2)\coloneqq \begin{pmatrix}
	1 & 1\\
	1 & -1
\end{pmatrix} \]
For an integer $s \geq 2$, the \emph{Sylvester matrix} of rank $2^s$ is
$\sylvester(2^s) \coloneqq \sylvester(2) \otimes  \sylvester(2^{s-1})$. Specifically, this means that~$\sylvester(2^s) = \begin{pmatrix}
	\sylvester(2^{s-1}) & \phantom{-}\sylvester(2^{s-1})\\
	\sylvester(2^{s-1}) & -\sylvester(2^{s-1})
\end{pmatrix}$.
Observe that Sylvester matrices are Hadamard matrices.

\paragraph{Extended Hadamard graphs.}
If $H$ is a Hadamard matrix, then the graph $G(H)$ with
\begin{align*}
	V(G(H)) &= \bigcup_{1 \leq i \leq n} \{r_i^+, r_i^-, c_i^+, c_i^-\},\\
	E(G(H)) &= \bigcup_{\substack{1 \leq i,j \leq n\\H_{i,j}=1}} \{r_i^+c_j^+, r_i^-c_j^-\} \cup \bigcup_{\substack{1 \leq i,j \leq n\\H_{i,j}=-1}} \{r_i^+c_j^-, r_i^-c_j^+\}
\end{align*}
is the \emph{Hadamard graph corresponding to $H$}.
It is well-known (cf.~\cite{brouwer1989}) that
\begin{equation} \label{eq: hadGraph amply parameters}
	\text{G(H) is $s$-regular, and $|N_{G(H)}(u) \cap N_{G(H)}(v)| = \nicefrac{s}{2}$ if $\dist(u,v) = 2$}.
\end{equation}
Moreover,
\begin{equation}\label{eq: 3 indep vertices have same nb of nbs}
	|N_{G(H)}(u) \cap N_{G(H)}(v) \cap N_{G(H)}(w)| = \nicefrac{s}{4}~\text{if}~\dist(u,v) = \dist(u,w) = \dist(v,w) = 2.
\end{equation}
We extend the edge set of $G(H)$ by $\bigcup_{1 \leq i \leq n}\{r_i^+r_i^-, c_i^+c_i^-\}$
and equip the resulting graph $G'(H)$ with a 2-colouring $\colouring_H\colon V(G'(H)) \to \{\redColour, \blueColour \}$ such that vertices of the form $r_i^+$ or $r_i^-$ are red and vertices of the form $c_i^+$ or $c_i^-$ are blue.
The coloured graph $\extendedHadamardGraph(H) \coloneqq (G'(H), \colouring_H)$ is the \emph{extended Hadamard graph corresponding to $H$}.
See Figure~\ref{fig: 8cycleWithDiagonals} for illustrations of the extended Hadamard graph corresponding to $\sylvester(2)$.
Let $(F, \colouring_F)$ be a coloured graph.
If there is a bijection $\varphi\colon \colouring_F(V(F)) \to \{\redColour, \blueColour\}$ such that $(F, \varphi\circ \colouring_F)$ is isomorphic to $(G'(H), \colouring_H)$ for some Hadamard matrix $H$, then we say that $(F, \colouring_F)$ is an \emph{extended Hadamard graph}.
It holds that
\begin{equation} \label{eq: equi-matrices-equi-graphs}
	H_1 \cong H_2~\text{if and only if}~\extendedHadamardGraph(H_1) \cong \extendedHadamardGraph(H_2).
\end{equation}

\begin{figure}
	\centering
	\definecolor{darkgreen}{rgb}{0,0.6,0}
	\definecolor{darkred}{RGB}{255, 0, 102}
	\definecolor{darkblue}{RGB}{51, 204, 204}
	\definecolor{darkyellow}{RGB}{155,135,12}
	\definecolor{fuchsia}{RGB}{255,0,255}
	\definecolor{lightgray}{RGB}{211,211,211}
	\tikzstyle{edge}=[draw,thick]
	\tikzstyle{vertex}=[circle,draw,minimum size=.4mm]
	
	\scalebox{0.6}{
		\begin{tikzpicture}[scale=.6]

			\node[vertex, darkblue ,fill= darkblue] (c1p) at (0,0) {};
			\node[vertex, darkblue ,fill= darkblue] (c1m) at (2,0) {};
			\node[vertex, darkblue ,fill= darkblue] (c2p) at (4,0) {};
			\node[vertex, darkblue ,fill= darkblue] (c2m) at (6,0) {};
			\node[vertex, darkred ,fill= darkred] (r1p) at (-2,-2) {};
			\node[vertex, darkred ,fill= darkred] (r1m) at (-2,-4) {};
			\node[vertex, darkred ,fill= darkred] (r2p) at (-2,-6) {};
			\node[vertex, darkred ,fill= darkred] (r2m) at (-2,-8) {};

			\begin{scope}[shift={(0.5,-3.5)}]
				\draw[lightgray, rounded corners, fill=lightgray]  (-0.1,-0.1) rectangle (1.1,1.1);
				\draw[edge] (0,0.75) .. controls (0.25,0.75) and (0.25,0.75) .. (0.25,1);
				\draw[edge] (0,0.25) .. controls (.75,0.25) and (0.75,0.25) .. (0.75,1);
				\node[gray!10!black, fill=white, draw, circle,inner sep=0.4,minimum height=0.4cm] at (1.05,-.05) {1};
			\end{scope}
			
			\begin{scope}[shift={(4.5,-3.5)}]
				\draw[lightgray, rounded corners, fill=lightgray]  (-0.1,-0.1) rectangle (1.1,1.1);
				\draw[edge] (0,0.75) .. controls (0.25,0.75) and (0.25,0.75) .. (0.25,1);
				\draw[edge] (0,0.25) .. controls (.75,0.25) and (0.75,0.25) .. (0.75,1);
				\node[gray!10!black, fill=white, draw, circle,inner sep=0.4,minimum height=0.4cm] at (1.05,-.05) {1};
			\end{scope}
			
			\begin{scope}[shift={(0.5,-7.5)}]
				\draw[lightgray, rounded corners, fill=lightgray]  (-0.1,-0.1) rectangle (1.1,1.1);
				\draw[edge] (0,0.75) .. controls (0.25,0.75) and (0.25,0.75) .. (0.25,1);
				\draw[edge] (0,0.25) .. controls (.75,0.25) and (0.75,0.25) .. (0.75,1);
				\node[gray!10!black, fill=white, draw, circle,inner sep=0.4,minimum height=0.4cm] at (1.05,-.05) {1};
			\end{scope}
			
			\begin{scope}[shift={(4.5,-7.5)}]
				\draw[lightgray, rounded corners, fill=lightgray]  (-0.1,-0.1) rectangle (1.1,1.1);
				\draw[edge] (0,0.75).. controls (0.75,0.75) and (0.75,0.75) .. (0.75,1);
				\draw[edge] (0,0.25) .. controls (.25,0.25) and (0.25,0.25) .. (0.25,1);
				\node[gray!10!black, fill=white, draw, circle,inner sep=0.4,minimum height=0.4cm] at (1.05,-.05) {-1};
			\end{scope}
			
			\foreach \x in {1,2}
			{
				\foreach \y in {r,c}
				{
					\draw[edge] (\y\x p) --  (\y\x m);
				}   
			} 
			\foreach \x/\y in {1/0,2/4}
			{
				\begin{scope}[shift={(0,-\y)}]
					\draw[edge] (r\x p) --  (0.4,-2.75)--  (0.5,-2.75);
					\draw[edge] (r\x m) --  (0.4,-3.25) --  (0.5,-3.25);
					\draw[edge] (r\x p) -- (3,-2) -- (4.4,-2.75) -- (4.5,-2.75);
					\draw[edge] (r\x m) -- (3,-4) -- (4.4,-3.25) -- (4.5,-3.25);
				\end{scope}
			}
			\foreach \x/\y in {1/0,2/4}
			{
				\begin{scope}[shift={(\y,0)}]
					\draw[edge] (c\x p) --  (0.75,-2.4) --  (0.75,-2.5);
					\draw[edge] (c\x m) --  (1.25,-2.4) --  (1.25,-2.5);
					\draw[edge] (c\x p) -- (0,-5)  -- (0.75,-6.4) -- (0.75,-6.5);
					\draw[edge] (c\x m) -- (2,-5) -- (1.25,-6.4) -- (1.25,-6.5);
				\end{scope}
			}
			
	\end{tikzpicture}}\hspace{3cm}
	\scalebox{0.6}{
		\begin{tikzpicture}[scale=1.3]
			\def\krad{2cm}
			\def\angle{360/8}
			\foreach \i in {0,2,4,6}{
				\node[vertex, darkblue ,fill= darkblue] (\i) at (270-\angle/2+\i*\angle:\krad) {};
			}
			\foreach \i in {1,3,5,7}{
				\node[vertex, darkred ,fill= darkred] (\i) at (270-\angle/2+\i*\angle:\krad) {};
			}
			\draw[edge] (0)--(1)--(2)--(3)--(4)--(5)--(6)--(7)--(0)
			(0)--(4) (1)--(5) (2)--(6) (3)--(7);
		\end{tikzpicture}
	}
	
	\caption{Two drawings of the extended Hadamard graph $\extendedHadamardGraph(\sylvester(2))$ corresponding to $\sylvester(2)$.}
	\label{fig: 8cycleWithDiagonals}
\end{figure}

\section{Homogeneity, tuple regularity, and regularity preserving operations}\label{sec:homogen:tup:reg:and:regpreserving}

We formally define the two regularity concepts central for our paper. Both of them capture a form of high regularity in a graph.

\begin{definition}[Ultrahomogeneity and tuple regularity]
	
	A coloured graph $(G, \colouring)$ is $k$-\emph{ultra\-ho\-mo\-ge\-neous}, or \ultraRegular{k}, if every colour-preserving isomorphism between two induced coloured subgraphs  of $(G, \colouring)$ of order at most $k$ extends to a colour-preserving automorphism of $(G, \colouring)$.
	Let $R$ be a colour class of $(G, \colouring)$ and $U \subseteq V(G)$.
	We set
	\begin{align*}
		N^R(U) &\coloneqq N_{(G, \colouring)}^R(U) \coloneqq \bigcap_{u \in U} N_G(u) \cap R\text{, and}\\
		\lambda^R(U) &\coloneqq \lambda_{(G, \colouring)}^R(U) \coloneqq \left| N^R(U)\right|.
	\end{align*}
	The coloured graph  $(G, \colouring)$ is \emph{$k$-tuple regular}, or \tupleRegular{k}, if for every pair of subsets $U, U' \subseteq V(G)$ with $|U| = |U'| \leq t$  and $(G, \colouring)[U] \cong (G, \colouring)[U']$  and for every colour class $R$ it holds that
	$\lambda^R(U) = \lambda^R(U')$.
	A graph is \emph{ultrahomogeneous}, or \emph{UH}, if it is~\ultraRegular{k} for every~$k \in \mathbb{N}$.
\end{definition}

Note that $(G, \colouring)$ is \tupleRegular{k} if it is \ultraRegular{k}.
Also note that a monochromatic graph $(G, \colouring)$ is \tupleRegular{2} if and only if $G$ is strongly regular.

\begin{lemma} \label{lem: safeOperations}
	Let $R$ be a colour class of a graph $(G, \colouring)$.
	\begin{enumerate}[(i)]
		\item \label{itm: induced-graph-has-property-pi}
		If $(G, \colouring)$ is \tupleRegular{k} (respectively \ultraRegular{k}) and $C$ is a union of colour classes of $(G, \colouring)$, then $(G, \colouring)[C]$ is \tupleRegular{k} (respectively \ultraRegular{k}).
		\item \label{itm: properties of colour subconstituents}
		If $(G, \colouring)$ is \tupleRegular{k} for some $k \geq 2$, then $G\left[N^R(b)\right]$ and $G\left[R\setminus N^R(b)\right]$ are \tupleRegular{(k-1)} for each $b \in V(G)\setminus R$.
	\end{enumerate}
\end{lemma}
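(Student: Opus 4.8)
The plan is to treat the two parts separately, in both cases exploiting that since $C$ (respectively $R$) is a union of colour classes, the relevant common neighbourhoods computed inside the induced subgraph agree with those computed inside $(G,\colouring)$.

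For part~(i), the starting observation is that for $S \subseteq C$ one has $(G,\colouring)[C][S] = (G,\colouring)[S]$, so induced subgraphs of $(G,\colouring)[C]$ are literally induced subgraphs of $(G,\colouring)$. In the \ultraRegular{k} case I would take a colour-preserving isomorphism $\varphi$ between two induced subgraphs of $(G,\colouring)[C]$ of order at most $k$, regard it as an isomorphism between induced subgraphs of $(G,\colouring)$, and extend it to a colour-preserving automorphism $\psi$ of $(G,\colouring)$. Because $\psi$ preserves colours and $C$ is a union of colour classes, $\psi(C)=C$, so $\restr{\psi}{C}$ is the desired automorphism of $(G,\colouring)[C]$. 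In the \tupleRegular{k} case I would note that every colour class of $(G,\colouring)[C]$ is a colour class $R' \subseteq C$ of $(G,\colouring)$, and that for $U \subseteq C$ the identity $N^{R'}_{(G,\colouring)[C]}(U) = N^{R'}_{(G,\colouring)}(U)$ holds since $R' \subseteq C$; the conclusion is then immediate from \tupleRegular{k} of $(G,\colouring)$.

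For part~(ii), write $R_1 \coloneqq N^R(b)$ and $R_0 \coloneqq R \setminus N^R(b)$; both $G[R_1]$ and $G[R_0]$ are monochromatic, so their single colour class is the whole vertex set. The key idea is to reinsert $b$ into the test sets, spending one unit of regularity. For $U \subseteq R_1$ the common neighbourhood inside $G[R_1]$ equals $N^R(U \cup \{b\})$, and for $U \subseteq R_0$ it equals $N^R(U) \setminus N_G(b)$, of size $\lambda^R(U) - \lambda^R(U \cup \{b\})$. Given an isomorphism $\varphi$ between the subgraphs induced on $U$ and $U'$ with $|U|=|U'| \leq k-1$, I would extend $\varphi$ by $b \mapsto b$. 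This extension is a colour-preserving isomorphism of $(G,\colouring)[U \cup \{b\}]$ and $(G,\colouring)[U' \cup \{b\}]$ because $b$ is adjacent to every vertex of $R_1$ (so all $b$-edges are present on both sides) in the first case, and non-adjacent to every vertex of $R_0$ (so no $b$-edges occur) in the second. Since $|U \cup \{b\}| \leq k$, \tupleRegular{k} of $(G,\colouring)$ yields $\lambda^R(U \cup \{b\}) = \lambda^R(U' \cup \{b\})$; together with $\lambda^R(U) = \lambda^R(U')$ (immediate, as $U, U'$ are monochromatic of size $\leq k$) this gives equality of the common-neighbourhood counts in $G[R_1]$ and in $G[R_0]$, respectively.

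The argument is largely bookkeeping; the one point requiring care — and the only genuine obstacle — is verifying that $b \mapsto b$ really extends $\varphi$ to an isomorphism of the enlarged induced subgraphs. This is exactly where the homogeneous way in which $b$ attaches to $R$ is used: $b$ is \emph{uniformly} adjacent to all of $R_1$ and \emph{uniformly} non-adjacent to all of $R_0$, so adjoining $b$ can neither create nor destroy an isomorphism. Tracking the set sizes, so that $|U \cup \{b\}| \leq k$, is what consumes the single degree of regularity and explains the shift from $k$ to $k-1$.
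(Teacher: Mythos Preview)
Your proposal is correct and follows essentially the same approach as the paper. For part~(i) the paper simply states that the claim is immediate from the definitions (citing~\cite{truss:countable-homogeneous-multipartite-graphs} for the UH case), while for part~(ii) the paper uses exactly your idea of adjoining~$b$ to the test sets to obtain $\lambda^R_{(G,\chi)[N^R(b)]}(U)=\lambda^R_{(G,\chi)}(U\cup\{b\})$ and then invokes $k$-TR; your write-up is more explicit (you spell out the $R_0$ case via $\lambda^R(U)-\lambda^R(U\cup\{b\})$ rather than saying ``analogously'', and you justify why $b\mapsto b$ extends the isomorphism), but the argument is the same.
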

\begin{proof}
	Claim~\eqref{itm: induced-graph-has-property-pi} for ultrahomogeneity has already been observed in~\cite{truss:countable-homogeneous-multipartite-graphs}.
	It follows immediately from the respective definition of the properties \tupleRegular{k} and \ultraRegular{k}.
	\noindent
	We prove~\eqref{itm: properties of colour subconstituents}.
	Let $b \in B$ and $U, U' \subseteq V\left(\firstRedNbGraph{G}{b}\right)$ with $|U|, |U'| \leq k-1$ and $\firstRedNbGraph{G}{b}[U] \cong \firstRedNbGraph{G}{b}[U']$.
	It holds that
	$G[U\cup \{b\}] \cong G[U' \cup \{b\}]$.
	Since $G$ is \tupleRegular{k},
	\begin{align*}
		\lambda^R_{(G, \colouring)\left[N^R_G(b) \right]}(U) = \lambda^R_{(G, \colouring)}(U \cup \{b\}) =
		\lambda^R_{(G, \colouring)}(U' \cup \{b\}) =
		\lambda^R_{(G, \colouring)\left[N^R_G(b) \right]}(U').
	\end{align*}
	Hence, $\firstRedNbGraph{G}{b}$ is \tupleRegular{(k-1)}.
	The result for $\secondRedNbGraph{G}{b}$ follows analogously.
\end{proof}

\section{Bicoloured graphs}
\label{sec:finite-graphs-2colours}
We adhere to the following convention throughout this section.
Unless stated otherwise, $(G, \colouring)$ is a 2-coloured graph with $\colouring \colon V(G) \to \{\redColour, \blueColour\}$.
We set $R \coloneqq \reverseFunction{\colouring}(\redColour)$ and $B \coloneqq \reverseFunction{\colouring}(\blueColour)$.
A~vertex $v$ of $G$ is \emph{red} (respectively \emph{blue}) if $\colouring(v) = \redColour$ (respectively $\colouring(v) = \blueColour$).

Assume that $(G, \chi)$ is \tupleRegular{3} such that $R$ and $B$ induce \tupleRegular{4} subgraphs.
In this section we prove that $R$ and $B$ are homogeneously connected if one of the colour classes induces a primitive graph.
If both sets induce imprimitive graphs and $R$ and $B$ are not homogeneously connected, then either $(G, \chi)$ can be obtained by a homogeneous-blow-up, or $(G, \chi)$ is an extended Hadamard graph.
We prove that the 2-coloured Wagner graph is the only extended Hadamard graph which is \tupleRegular{4}. Indeed, it is even ultrahomogeneous.

\subsection{Colour classes inducing primitive graphs}
\label{sec: primitive graphs induced by a colour class}

The overall goal of this subsection is to prove the following theorem.

\begin{theorem}\label{thm: 2col primitive graphs main}
	Let $(G, \chi)$ be a \tupleRegular{3} graph.
	If $G[R]$ is primitive and \ultraRegular{4} or it is the McLaughlin graph, then $R$ and $B$ are homogeneously connected.
\end{theorem}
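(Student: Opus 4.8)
The plan is to argue by contradiction and reduce everything to the impossibility of a certain partition of the strongly regular graph $G[R]$. First I would observe that, since $(G,\chi)$ is \tupleRegular{3}, part~\eqref{itm: induced-graph-has-property-pi} of Lemma~\ref{lem: safeOperations} applied to the colour class $R$ shows that the monochromatic graph $G[R]$ is \tupleRegular{3}, hence \tupleRegular{2}, and therefore strongly regular, say with parameter set $(n,d,\lambda,\mu)$. Since all blue vertices induce isomorphic one-vertex subgraphs, \tupleRegular{1} already forces $\rho \coloneqq |N^R(b)|$ to be independent of the choice of $b \in B$. Consequently $R$ and $B$ are homogeneously connected precisely when $\rho \in \{0,n\}$: for $\rho = 0$ no edge joins $R$ and $B$, while for $\rho = n$ every red--blue pair is an edge. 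It thus suffices to derive a contradiction from $0 < \rho < n$.

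Assume $0 < \rho < n$ and fix $b \in B$. Put $V_1 \coloneqq N^R(b)$ and $V_2 \coloneqq R \setminus N^R(b)$, both non-empty. By part~\eqref{itm: properties of colour subconstituents} of Lemma~\ref{lem: safeOperations}, the graphs $G[V_1]$ and $G[V_2]$ are \tupleRegular{2}, and being monochromatic they are strongly regular; hence $(V_1,V_2)$ partitions $G[R]$ into two non-empty induced strongly regular graphs. This partition is moreover equitable: for a red vertex $r$ the number of its red neighbours in $V_1$ equals $\lambda^R(\{r,b\})$, which by \tupleRegular{3} depends only on whether $r$ is adjacent to $b$; thus every vertex of $V_1$ has the same number $d_1$ of neighbours inside $V_1$ and every vertex of $V_2$ has the same number $e$ of neighbours inside $V_1$. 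This is exactly the situation of Lemma~\ref{lem: partitionSrgIntoTwoSrgs}, so the parameter set of $G[V_1]$ determines that of $G[V_2]$ through Equations~\eqref{aln: orderOfSrgPartition}--\eqref{aln: muOfSrgPartition}.

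It remains to show that no such non-trivial partition exists for the graphs allowed by the hypothesis. If $G[R]$ is primitive and \ultraRegular{4}, then by the classification of finite monochromatic \ultraRegular{4} graphs recorded in Table~\ref{tab:classifiaction}, primitivity leaves only $C_5$, the rook's graph $\rookGraph{3}$, and the Schl\"afli graph (up to complementation); the remaining hypothesis is that $G[R]$ is the McLaughlin graph. Since possessing a partition into two induced strongly regular graphs is preserved under complementation, it is enough to treat one graph of each complementary pair. For $C_5$ and $\rookGraph{3}$ I would rule out a non-trivial partition by direct inspection, since deleting a proper non-empty vertex subset from either graph never leaves two complementary regular induced subgraphs of the required degrees. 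For the Schl\"afli and McLaughlin graphs I would instead feed the admissible parameter sets for $G[V_1]$ (with $1 \le n_1 \le n-1$) through Lemma~\ref{lem: partitionSrgIntoTwoSrgs}, and check that the resulting candidate parameters for $G[V_2]$ are never simultaneously integral, non-negative, and compatible with Equation~\eqref{eq: parameterConditionSRGs} and with the eigenvalue constraint imposed by the equitable partition, namely that the non-principal quotient eigenvalue $d_1 - e$ be a non-principal eigenvalue of $G[R]$. This forces $\rho \in \{0,n\}$, contradicting $0 < \rho < n$ and proving that $R$ and $B$ are homogeneously connected.

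The hard part is this last step for the two large graphs: while $C_5$ and $\rookGraph{3}$ succumb to a finite check, for the Schl\"afli and McLaughlin graphs one must exclude \emph{every} admissible size $n_1$. I expect the cleanest route is to isolate a single obstruction — for instance the forced quotient eigenvalue $d_1-e$ together with integrality of the eigenvalue multiplicities of the two parts — that already rules out all non-trivial partitions, rather than grinding through each parameter set produced by Lemma~\ref{lem: partitionSrgIntoTwoSrgs} by hand.
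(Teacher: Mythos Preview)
Your proposal is correct and follows essentially the same route as the paper: reduce via Lemma~\ref{lem: safeOperations}\eqref{itm: properties of colour subconstituents} to the impossibility of partitioning $G[R]$ into two induced strongly regular subgraphs, invoke the classification to list $C_5$, $\rookGraph{3}$, the Schl\"afli graph and the McLaughlin graph (up to complement), and then eliminate each candidate, using Lemma~\ref{lem: partitionSrgIntoTwoSrgs} for the two large ones. The paper packages the small cases into Lemma~\ref{lem:cricle-components} and Corollary~\ref{lem:rook-graph} (the latter resting on Lemma~\ref{lem: inducedSubgraphsOfRook}, which pins down the strongly regular induced subgraphs of a rook's graph), and for Schl\"afli and McLaughlin it does \emph{not} find the single eigenvalue obstruction you hope for --- it carries out the exhaustive integrality check against Brouwer's parameter tables, relegated to an appendix. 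Your equitable-partition remark is correct but not used in the paper's argument.
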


Due to the classification of  \tupleRegular{4} monochromatic primitive graphs (see Table~\ref{tab:classifiaction}), it suffices to prove that $R$ and $B$ are homogeneously connected whenever $G[R]$ or its complement is the $3\times 3$ rook's graph, the 5-cycle, the Schl\"afli graph, or the McLaughlin graph.

\begin{lemma}
	\label{lem:cricle-components}
	If $(G, \colouring)$ is  \tupleRegular{2} with $G[R] \cong \pentagon$, then $R$ and $B$ are homogeneously connected.
\end{lemma}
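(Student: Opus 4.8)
The plan is to exploit that $C_5$ is a small, highly symmetric graph and that $2$-tuple regularity forces the connection pattern between $B$ and the colour class $R$ to respect the symmetry of $G[R]\cong C_5$. First I would fix an arbitrary blue vertex $b\in B$ and study $N^R(b)$, the set of its red neighbours. The key observation is that $2$-tuple regularity guarantees that $\lambda^R(U)$ depends only on the isomorphism type of $(G,\colouring)[U]$ for sets $U$ of size at most $2$. Applying this with $U=\{r\}$ a single red vertex and with $U=\{b,r\}$ for $b\in B$, $r\in R$, lets me argue that the number of blue neighbours is constant across red vertices and that the way $B$ attaches to $R$ is ``locally uniform''.

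The heart of the argument is to pin down $N^R(b)$ as a subset of the five vertices of $C_5$. For any two red vertices $r,r'$, the induced coloured subgraph $(G,\colouring)[\{b,r,r'\}]$ has an isomorphism type determined by whether $r\sim r'$ in $C_5$ and by the colours, which are fixed. By $2$-tuple regularity the quantity $\lambda^{B}(\{r,r'\})=|N_G(r)\cap N_G(r')\cap B|$ depends only on whether $r$ and $r'$ are adjacent or non-adjacent in $C_5$. I would then translate this into a counting/design-type constraint on the trace that $B$ cuts out on $R$: each $b$ picks out a subset $S_b:=N^R(b)\subseteq V(C_5)$, and the pairwise-intersection counts of the family $\{S_b\}_{b\in B}$ are governed by just two numbers (one for adjacent pairs, one for non-adjacent pairs in $C_5$). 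Because $\Aut(C_5)$ is vertex- and edge-transitive, any such uniform family of subsets must itself be $\Aut(C_5)$-invariant as a multiset, so the only possible sizes for $S_b$ are $0,5$ (the homogeneous cases) or a ``symmetric'' size like $2$ or $3$ corresponding to a union of orbits — and a quick case check rules the intermediate sizes out.

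The concrete case analysis is where the real work sits. I would enumerate the possible values of $|S_b|\in\{0,1,2,3,4,5\}$ and for each show that the adjacency/non-adjacency intersection counts forced by $2$-tuple regularity are inconsistent unless $|S_b|\in\{0,5\}$. For instance, if $|S_b|=1$ then picking the unique $r\in S_b$ and comparing an adjacent partner $r'$ with a non-adjacent partner $r''$ (both of which exist in $C_5$ and yield the same type $\{b\text{-blue},\,r\text{-red},\,r'\text{-red}\}$ with a fixed edge relation among the red pair) gives two different values of $\lambda^B$ for configurations of the same isomorphism type, a contradiction; the asymmetry of $C_5$'s neighbourhoods around a single chosen vertex is what breaks it. The remaining sizes are handled by the same comparison technique, using that in $C_5$ no nonempty proper vertex subset is simultaneously ``regular'' with respect to both the edge and the non-edge relation except when it is all of $V(C_5)$. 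Since $|S_b|\in\{0,5\}$ for every $b$, each blue vertex is either adjacent to all of $R$ or to none of $R$; to finish I would invoke $1$-tuple regularity (ordinary regularity within the bipartite structure) to show the two possibilities cannot coexist, forcing either $S_b=R$ for all $b$ or $S_b=\emptyset$ for all $b$, which is exactly homogeneous connection.

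The main obstacle I anticipate is the case where $|S_b|$ is an ``$\Aut(C_5)$-balanced'' looking size, namely $|S_b|=2$ with the two chosen vertices adjacent versus non-adjacent, since $C_5$ has both adjacent and non-adjacent pairs of equal size and naive symmetry counting does not immediately separate them; the delicate point is that a single $b$ realises only one $S_b$, so I must combine information across several blue vertices and use that the \emph{pairwise} intersection counts $|S_b\cap S_{b'}|$ are also constrained (via $2$-tuple regularity applied to pairs of blue vertices) to derive a global design-like contradiction. I expect this to be the step requiring the most careful bookkeeping, though it remains a finite check because everything lives over the five vertices of $C_5$.
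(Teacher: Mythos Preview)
Your overall direction—pin down $S_b:=N^R(b)\subseteq V(C_5)$ by case analysis using $2$-TR—is sound, but the details as sketched have a gap and the argument is more complicated than necessary. In your $|S_b|=1$ case you compare $3$-vertex configurations $\{b,r,r'\}$ and $\{b,r,r''\}$ with $r'\sim r$ and $r''\not\sim r$ and claim they have ``the same type with a fixed edge relation among the red pair''; but one red pair is an edge and the other is not, so the types differ, and in any case only $2$-TR is assumed, so $\lambda$-values of size-$3$ sets are not available to you. The correct comparison uses size-$2$ sets $\{b,r'\}$: for $r'\in S_b$ these are all adjacent blue--red pairs, so $\lambda^R(\{b,r'\})=|S_b\cap N_{C_5}(r')|$ must be constant, i.e.\ $G[S_b]$ is regular; symmetrically, for $r'\notin S_b$ one gets that $G[R\setminus S_b]$ is regular. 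This single observation handles every size $1\le|S_b|\le 4$ at once, including your anticipated ``hard'' case $|S_b|=2$; no $\Aut(C_5)$-orbit bookkeeping and no design-theoretic argument across several blue vertices is needed.

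Once straightened out, this \emph{is} the paper's proof, just unpackaged. The paper invokes Lemma~\ref{lem: safeOperations}\eqref{itm: properties of colour subconstituents} to say directly that $G[N^R(b)]$ and $G[R\setminus N^R(b)]$ are $(k-1)$-TR, hence regular, for every $b\in B$; one then checks in one line that $C_5$ admits no partition into two nonempty sets each inducing a regular subgraph, forcing $N^R(b)\in\{\emptyset,R\}$. Finally $1$-TR applied to a single blue vertex makes all $b\in B$ choose the same option, so $R$ and $B$ are homogeneously connected. Your route is not genuinely different; it is the same idea done by hand, and the abstraction via Lemma~\ref{lem: safeOperations} is what replaces your six-way case split.
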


\begin{proof}
	The $\pentagon$ does not allow for a partition into two non-empty sets such that both sets induce a regular graph.
	It follows with~Lemma~\ref{lem: safeOperations}.\eqref{itm: properties of colour subconstituents} that every $b \in B$ satisfies $N^R(b) \in \{\emptyset, R\}$.
	Since $(G, \colouring)$ is \tupleRegular{2}, we obtain $N^R(b)= N^R(b')$ for all $b, b' \in B$.
\end{proof}

\begin{lemma}
	\label{lem: inducedSubgraphsOfRook}
	A strongly regular induced subgraph of a rook's graph is an $sK_t$ for some $s,t \in \mathbb{N}_{\geq 1}$ or a rook's graph.
\end{lemma}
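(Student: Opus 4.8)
The plan is to view the induced subgraph as the line graph of a bipartite graph and to read strong regularity off the degree pattern of that bipartite graph. Identify the selected cells $S\subseteq V(\rookGraph{m})$ with the edge set of a bipartite graph $G'$ whose two sides are the rows and the columns meeting $S$; two cells are adjacent in $\rookGraph{m}[S]$ exactly when, regarded as edges of $G'$, they share an endpoint, so $\rookGraph{m}[S]$ is the line graph $L(G')$. After deleting empty rows and columns I may assume $G'$ has no isolated vertices. Write $H\coloneqq \rookGraph{m}[S]$ with parameters $(n,d,\lambda,\mu)$. Throughout I will use the standard fact that a strongly regular graph with $\mu=0$ is a disjoint union of equal cliques: if $u\sim v\sim w$ with $u\neq w$ and $u\not\sim w$, then $v$ is a common neighbour of the non-adjacent pair $u,w$, forcing $\mu\geq 1$.

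First I would dispose of the easy cases. If $H$ is disconnected, two vertices in different components are non-adjacent with no common neighbour, so $\mu=0$ and hence $H\cong sK_t$. If $H$ is connected and complete, then $H=K_t=1\cdot K_t$ is again of the form $sK_t$. So I may assume $H$ is connected and not complete; then $\mu\geq 1$ (a connected strongly regular graph with $\mu=0$ would be a single clique, hence complete), and $G'$ is connected (otherwise $L(G')$ would be disconnected). In this regime I must show that $H$ is a rook's graph.

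The core step is biregularity plus the $\lambda$-condition. The degree of the edge $\{x,y\}$ in $L(G')$ is $\deg_{G'}(x)+\deg_{G'}(y)-2$, so $d$-regularity says the endpoint-degree sum equals the constant $d+2$ along every edge; propagating this equality through the connected graph $G'$ shows that $G'$ is biregular, with every row-vertex of degree $r$ and every column-vertex of degree $c$, where $r+c=d+2$. Two edges sharing their row-endpoint have exactly $r-2$ common neighbours in $L(G')$, while two sharing a column-endpoint have $c-2$; since every adjacent pair is of one of these two kinds, constancy of $\lambda$ forces $r=c=:k$ as soon as both $r\geq 2$ and $c\geq 2$. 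The degenerate options $r=1$ or $c=1$ make $G'$ a star and $H$ complete, which is excluded here, so $r=c=k\geq 2$; counting edges then gives equal parts of size $N\geq k$.

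It remains to force $G'=K_{k,k}$ (whence $H=L(K_{k,k})=\rookGraph{k}$), and this pinning-down is the main obstacle. Suppose $N>k$. Since no column is isolated, the rows cannot all share a common $k$-element neighbourhood, so there are rows $i,i'$ and columns $j\in N_{G'}(i)\setminus N_{G'}(i')$ and $j'\in N_{G'}(i')\setminus N_{G'}(i)$. The edges $\{i,j\}$ and $\{i',j'\}$ are non-adjacent in $H$, and their only possible common neighbours $\{i,j'\}$ and $\{i',j\}$ are both absent, so this non-adjacent pair has $0$ common neighbours, contradicting $\mu\geq 1$. Hence $N=k$, every row is adjacent to all $k$ columns, $G'=K_{k,k}$, and $H\cong\rookGraph{k}$. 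The only genuine subtlety is keeping the case distinction exhaustive and repeatedly invoking the characterisation of $\mu=0$ as a disjoint union of cliques.
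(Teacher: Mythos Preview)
Your proof is correct and takes a genuinely different route from the paper's. The paper works directly with the grid coordinates: it shows that once some vertex has neighbours in both its row and its column one gets $d=2\lambda+2$, then uses the ambient bound $\mu\le 2$ together with Shrikhande's uniqueness theorem to identify the $\mu=2$ case as a rook's graph (checking that the Shrikhande graph cannot occur), and disposes of $\mu=1$ by an ad hoc contradiction. Your argument instead exploits the identification $\rookGraph{m}=L(K_{m,m})$ to view $H$ as the line graph of a bipartite graph $G'$; regularity of $H$ forces $G'$ to be biregular, the $\lambda$-condition forces the two side-degrees to coincide, and the $\mu\ge 1$ condition forces $G'=K_{k,k}$. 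This is more self-contained---you never invoke Shrikhande's theorem or any external classification---and the case analysis is shorter. The price is that the line-graph viewpoint is a genuine extra idea one has to supply, whereas the paper's argument proceeds entirely inside the coordinate description of $\rookGraph{m}$.
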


\begin{proof}
	Let~$m \in \mathbb{N}_{\geq 1}$ and let $A \subseteq V(\rookGraph{m})$ be a set of vertices such that $G[A]$ is strongly regular with parameters $(n,d,\lambda, \mu)$.
	Assume that $G[A]$ is not empty.
	If all neighbours of $v_{i,j}$ in $G[A]$ are in row $i$ (respectively in column $j$), then  $d = \lambda +1$, which implies $G[A] \cong sK_d$ for some $s \in \mathbb{N}_{\geq 1}$.
	Otherwise, there are $i', j' \in \{1, \dots, m\}$ with $i' \neq i$ and $j' \neq j$ 
	such that $v_{i', j}, v_{i, j'} \in A$. 
	Every additional neighbour of $v_{i,j}$ in $G[A]-\{v_{i,j'},v_{i',j}\}$ is either in row $i$ and, hence, a common neighbour of $v_{i,j}$ and $v_{i,j'}$ or in column $j$ and, hence, a common neighbour of $v_{i,j}$ and $v_{i',j}$.
	Consequently,
	\[d = |N_{G[A]}(v_{i,j})\cap N_{G[A]}(v_{i,j'})| + |N_{G[A]}(v_{i,j})\cap N_{G[A]}(v_{i',j})| + 2 =  2\lambda + 2.\]
	Since $G[A]$ is a subgraph of G we have $\mu \in \{0,1,2\}$ and by Equation~\eqref{eq: parameterConditionSRGs} $\mu \neq 0$.
	If $\mu = 2$, then $n = (\lambda + 1)^2$ and $G[A]$ is a $\rookGraph{\lambda+1}$ by Shrikhande's Theorem~\cite{shrikhande1959rook} (note that the graph induced by six distinct neighbours of a vertex in a rook's graph is of girth~3, whereas the neighbourhood of a vertex in the Shrinkhande graph induces a $C_6$).
	If $\mu = 1$, then $v_{i', j'} \notin A$ and, hence, $v_{i', j''} \in A$ for some $j'' \notin \{j, j'\}$.
	Since $v_{i', j''}$ has a common neighbour with $v_{i,j'}$ in $A$, we obtain $v_{i, j''} \in A$. This is a contradiction since $v_{i,j}$ and $v_{i', j''}$ have two common neighbours in~$G[A]$.
\end{proof}

\begin{corollary}
	\label{lem:rook-graph}
	If $(G, \colouring)$ is \tupleRegular{3} and $G[R]$ is an $\rookGraph{m}$ for some $m \geq 3$, then $R$ and $B$ are homogeneously connected.
\end{corollary}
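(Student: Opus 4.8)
The plan is to mirror the argument of Lemma~\ref{lem:cricle-components}: I will show that for every blue vertex~$b$ the red neighbourhood~$N^R(b)$ is either empty or all of~$R$, and then read off homogeneous connectedness from regularity. Fix $b \in B$. Since $(G, \colouring)$ is \tupleRegular{3}, Lemma~\ref{lem: safeOperations}.\eqref{itm: properties of colour subconstituents} (with $k=3$) shows that both $\firstRedNbGraph{G}{b}$ and $\secondRedNbGraph{G}{b}$ are \tupleRegular{2}, hence, being monochromatic, strongly regular. Thus, writing $A \coloneqq N^R(b)$, the pair $(A, R \setminus A)$ is a partition of $V(\rookGraph{m})$ into two induced strongly regular subgraphs. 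The heart of the proof is therefore the claim that for $m \geq 3$ the rook's graph $\rookGraph{m}$ admits no partition into two non-empty induced strongly regular subgraphs; granting this, $A \in \{\emptyset, R\}$. To establish the claim I assume $\emptyset \neq A \neq R$ and derive a contradiction, using Lemma~\ref{lem: inducedSubgraphsOfRook} to note that each of $G[A]$ and $G[R\setminus A]$ is an $sK_t$ or a rook's graph. (Alternatively, one could feed the admissible parameter sets into Lemma~\ref{lem: partitionSrgIntoTwoSrgs} and reduce the claim to a finite check; I prefer the geometric route below.)

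First I would dispose of the case that $G[A]$ is a rook's graph $\rookGraph{m_1}$ with $m_1 \geq 2$. Using that every clique of $\rookGraph{m}$ lies inside a single row or column, one checks that such an induced $\rookGraph{m_1}$ occupies exactly $m_1$ rows and $m_1$ columns, that is, $A$ is an $m_1 \times m_1$ subgrid. But then the cells of $R \setminus A$ lying in none of these rows and none of these columns have degree $2m-2$ in $R\setminus A$, whereas the cells lying in one of the $m_1$ chosen rows but an unchosen column have degree $2m-2-m_1$; both types occur since $1 \leq m_1 < m$, so $R \setminus A$ is not even regular, a contradiction. By symmetry this also rules out $G[R\setminus A]$ being a rook's graph with at least two rows, so both parts must be of the form $sK_t$.

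To treat the case where both parts are $sK_t$, I would record for each row index $i$ and column index $j$ the numbers $\rho_i$ and $\gamma_j$ of vertices of $A$ in row $i$ and column $j$. Since the neighbours of a cell $(i,j) \notin A$ inside $\rookGraph{m}$ are exactly the remaining cells of its row and column, such a cell has precisely $\rho_i + \gamma_j$ neighbours in $A$; as $\rookGraph{m}$ is $(2m-2)$-regular and $R\setminus A$ is regular, the sum $\rho_i + \gamma_j$ is constant over all $(i,j)\notin A$. Exploiting that in an $sK_t$ the neighbourhood of every vertex is a clique, and hence lies within a single row or column, one finds $\rho_i,\gamma_j \in \{0,1,t\}$ and, pushing the constancy of $\rho_i+\gamma_j$ through the possible values, concludes that $t = 1$ and that $A$ is a transversal meeting every row and every column exactly once. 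The final step is to observe that deleting a transversal does not leave a strongly regular graph: two non-adjacent cells $(i_1,j_1),(i_2,j_2) \in R\setminus A$ have in $\rookGraph{m}$ exactly the two common neighbours $(i_1,j_2)$ and $(i_2,j_1)$, hence $2 - |A \cap \{(i_1,j_2),(i_2,j_1)\}|$ common neighbours in $R\setminus A$, and for $m\geq 3$ this count is not the same for all such pairs. This contradicts strong regularity and proves the claim.

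With the claim in hand, $N^R(b) \in \{\emptyset, R\}$ for every $b \in B$. Since \tupleRegular{3} implies \tupleRegular{1}, all blue vertices have the same number $|N^R(b)|$ of red neighbours, so either $N^R(b) = \emptyset$ for all $b$ or $N^R(b) = R$ for all $b$; in both cases $R$ and $B$ are homogeneously connected. The main obstacle is the claim, and within it the $sK_t$ case: the transversal configuration survives mere regularity of $R\setminus A$ and is killed only by its \emph{strong} regularity, which is precisely where the \tupleRegular{3} hypothesis (rather than just \tupleRegular{2}) enters through Lemma~\ref{lem: safeOperations}.\eqref{itm: properties of colour subconstituents}.
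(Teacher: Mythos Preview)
Your overall strategy matches the paper's: reduce to showing that $R_m$ (for $m\geq 3$) admits no partition into two induced strongly regular subgraphs, invoke Lemma~\ref{lem: inducedSubgraphsOfRook}, and eliminate the resulting cases. Your Case~1 (one part an induced $R_{m_1}$ with $m_1\geq 2$) is handled correctly and is in fact more explicit than the paper's proof, which only asserts the both-$sK_t$ impossibility and passes over the rook's-graph case in silence.

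There is, however, a genuine gap in your Case~2. From regularity of $R\setminus A$ you correctly deduce that $\rho_i+\gamma_j$ is constant on cells outside $A$, and from $G[A]\cong sK_t$ that $\rho_i,\gamma_j\in\{0,1,t\}$. But these two ingredients alone do \emph{not} force $t=1$. Take $A$ to be a single full row of $R_m$: then $G[A]=K_m$ (so $s=1$, $t=m$), one has $\rho_1=m$, $\rho_i=0$ for $i\geq 2$, $\gamma_j=1$ for all $j$, and hence $\rho_i+\gamma_j=1$ for every $(i,j)\notin A$; the complement $R\setminus A$ is $(2m-3)$-regular. So ``pushing the constancy through the possible values'' does not exclude $t=m$. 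What actually kills the full-row configuration is the hypothesis you have available but have not invoked in this step: that $G[R\setminus A]$ is itself of the form $s'K_{t'}$ (the complement of a full row contains induced paths of length two). Equivalently, you must use the symmetric constraints $m-\rho_i,\, m-\gamma_j\in\{0,1,t'\}$ alongside $\rho_i,\gamma_j\in\{0,1,t\}$. With both in hand the argument can be completed, though the shape changes: one finds that $t,t'\geq 2$ forces $t=t'=m-1$ and then a direct contradiction, while $t=1$ with $R\setminus A$ of the form $s'K_{t'}$ is already impossible (the complement of a transversal is connected but not complete). Your closing $\mu$-computation for the transversal is correct but is not, in the end, where the contradiction lands.
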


\begin{proof}
	Towards a contradiction, suppose that there exists $b \in B$ with $\emptyset \subsetneq R \cap N_G(b) \subsetneq R$.
	By Part~\eqref{itm: properties of colour subconstituents} of Lemma~\ref{lem: safeOperations} the red vertices can be partitioned into two sets $A$ and $B$ such that both graphs, $G[A]$ and $G[B]$, are \tupleRegular{2}.
	By Lemma~\ref{lem: inducedSubgraphsOfRook}, $G[A]$ and $G[B]$ each are a disjoint union of cliques or a rook's graph.
	However, a rook's graph $R_m$ with $m \geq 3$ cannot be partitioned into two induced graphs of the form $sK_t$ and $s'K_{t'}$, respectively.
\end{proof}

\begin{lemma}
	\label{lem: Sch HS McL do not allow for regular partition}
	If $H$ is a McLaughlin graph or a Schl\"afli graph and $(V_1, V_2)$ is a partition of $V(H)$, then one of the graphs $H[V_1]$ and $H[V_2]$ is not strongly regular.
\end{lemma}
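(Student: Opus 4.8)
The plan is to assume, for contradiction, that both $H[V_1]$ and $H[V_2]$ are strongly regular, say with parameters $(n_i,d_i,\lambda_i,\mu_i)$, and to show the situation is over-determined. I will use that the Schl\"afli graph $(27,16,10,8)$ has eigenvalues $16,4,-2$ and the McLaughlin graph $(275,112,30,56)$ has eigenvalues $112,2,-28$ (computed from Equation~\eqref{eq: parameterConditionSRGs} and the standard quadratic for the restricted eigenvalues).

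First I would observe that the partition is equitable. Since $H$ is $d$-regular and $H[V_i]$ is $d_i$-regular, every vertex of $V_i$ has exactly $d-d_i$ neighbours in the opposite class; this is the edge count underlying Equation~\eqref{aln: degOfSrgPartitions}. Hence the quotient matrix $\left(\begin{smallmatrix} d_1 & d-d_1\\ d-d_2 & d_2\end{smallmatrix}\right)$ has both eigenvalues, namely $d$ and $\theta:=d_1+d_2-d$, among the eigenvalues of $H$. As $H$ is primitive, $d$ is simple, so $\theta\in\{r,s\}$ with $r>s$ the two non-principal eigenvalues. This yields the single relation $d_1+d_2=d+\theta$; together with $n_1+n_2=n$ and $(d-d_1)n_1=(d-d_2)n_2$ from Equation~\eqref{aln: degOfSrgPartitions}, the quantities $n_1,d_1,n_2,d_2$ are confined to a short explicit list of integer tuples for each of the at most two values of $\theta$.

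Next I would invoke Cauchy interlacing: the second-largest eigenvalue of each $H[V_i]$ is at most $r$ and its least eigenvalue is at least $s$. For the Schl\"afli graph this forces the least eigenvalue of each part into $\{-1,-2\}$, so each $H[V_i]$ is a disjoint union of cliques or a connected strongly regular graph with least eigenvalue exactly $-2$; for the McLaughlin graph it forces the second eigenvalue of each part into $\{0,1,2\}$. Combined with the tuple $(n_i,d_i)$ and the feasibility conditions for strongly regular graphs (Equation~\eqref{eq: parameterConditionSRGs} plus integrality of the eigenvalue multiplicities), this pins each part down to one of finitely many known strongly regular graphs or rules it out. For every surviving candidate I would then compute $\lambda_2$ and $\mu_2$ via Equations~\eqref{aln: lambdaOfSrgPartition} and~\eqref{aln: muOfSrgPartition} of Lemma~\ref{lem: partitionSrgIntoTwoSrgs} and check integrality.

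For the Schl\"afli graph this is short: the value $\theta=4$ forces one part to be $K_9$, which is impossible since the Schl\"afli graph has clique number $6$; the value $\theta=-2$ leaves only four tuples (up to swapping the parts), in which $H[V_1]$ is forced to be an independent $3$-set, $2K_3$, the $3\times3$ rook's graph, or a graph interlacing forces to be $K_{6,6}$, and in each case Equation~\eqref{aln: lambdaOfSrgPartition} returns a non-integral $\lambda_2$ (or, for $K_{6,6}$, a least eigenvalue below $-2$). The McLaughlin graph is the main obstacle: the Diophantine filter of the second step leaves many tuples, and the decisive leverage is the interlacing bound that the second eigenvalue of each part is at most $2$ (equivalently, in the complement, least eigenvalue at least $-3$), which confines each part to a bounded list of strongly regular graphs --- complete multipartite graphs, complements of the graphs with least eigenvalue $-2$ classified by Seidel, and the corresponding least-eigenvalue $-3$ family. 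Feeding each through Lemma~\ref{lem: partitionSrgIntoTwoSrgs} produces a non-integral $\lambda_2$ or $\mu_2$ in every case. I expect the real work to be the bookkeeping of these McLaughlin subcases rather than any single conceptual step.
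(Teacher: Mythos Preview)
Your approach is correct but genuinely different from the paper's. The paper proceeds by brute force: it lists every feasible strongly regular parameter set with $n_1\le \lfloor n/2\rfloor$ from Brouwer's tables, together with all imprimitive graphs $sK_t$ and $\overline{sK_t}$ bounded by the clique and independence numbers, and for each candidate computes $(n_2,d_2,\lambda_2,\mu_2)$ via Lemma~\ref{lem: partitionSrgIntoTwoSrgs} until an integrality condition fails. This yields about $30$ cases for the Schl\"afli graph and about $370$ for the McLaughlin graph (relegated to an appendix and double-checked by computer). Your route is more structural: observing that the partition is equitable pins the second quotient eigenvalue $\theta=d_1+d_2-d$ to one of the two non-principal eigenvalues of $H$, and the three linear relations then leave only a handful of $(n_i,d_i)$ pairs; interlacing further restricts the spectra of the parts. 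Your Schl\"afli analysis is essentially complete, and the reduction for McLaughlin to roughly $30$ tuples is a real gain over the paper's enumeration.

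One caution on the McLaughlin sketch: your appeal to a ``least-eigenvalue $-3$ family'' is imprecise. Seidel's classification covers only least eigenvalue $\ge -2$; for $-3$ one has Neumaier's theorem, which gives two infinite families (Latin-square and Steiner graphs) plus finitely many exceptions, not a bounded list. This is not fatal, because your equitable-partition step has already reduced to finitely many explicit $(n_i,d_i)$ pairs, so you can simply determine all strongly regular graphs with each such $(n_i,d_i)$ and second eigenvalue at most $2$ directly and feed them through Lemma~\ref{lem: partitionSrgIntoTwoSrgs}. The bookkeeping is still substantially lighter than the paper's, but you should not present the eigenvalue bound alone as delivering a finite list.
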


\begin{proof}[Proof sketch.]
	Assume that $H$ is the Schl\"afli graph.
	Suppose that $V(H)$ can be partitioned into two sets $V_1$ and $V_2$ such that both graphs $H[V_1]$ and $H[V_2]$ are strongly regular.
	We denote the respective parameter sets by $(n_1,d_1,\lambda_1,\mu_1)$ and $(n_2,d_2,\lambda_2,\mu_2)$.
	The Schl\"afli graph is of order~27, and hence, we may assume by symmetry that $n_1 \leq 13$.
	
	According to Brouwer's list (see~\cite[Subsection 9.9]{MR2882891} and~\cite{BrouwerParamter:online})
	of feasible parameter sets for strongly regular graphs, there are five parameter sets of primitive strongly regular graphs of order at most~13, namely
	$(5,2,0,1)$, $(9,4,1,2)$, $(10,3,0,1)$, $(10,6,3,4)$, and $(13,6,2,3)$.
	Choosing one of these for the parameters of $H[V_1]$ we can compute the parameters of $H[V_2]$ using Lemma~\ref{lem: partitionSrgIntoTwoSrgs}. 
	However, all possible choices for the parameters of $H[V_1]$ lead to a violation of the integrality conditions for the parameters of $H[V_2]$ (see also Appendix~\ref{sec:appendix}).
	It remains to consider parameter sets $(n_1, d_1, \lambda_1, \mu_1)$ with $n_1 \leq 13$ which correspond to imprimitive graphs.
	Since the clique number of the Schl\"afli graph is~6 and its largest independent set is of cardinality~3, we may restrict ourselves to the following graphs:
	\begin{itemize}
		\item $sK_t$ with $s \leq 3$, $t \leq 6$, and $st \leq 13$,
		\item $\overline{s'K_{t'}}$ with $s'\leq 6$, $t'\leq 3$ and $s't' \leq 13$.
	\end{itemize}
	There are 24 of these imprimitive graphs in total.
	Again, all of possible parameter sets lead to a violation of the integrality conditions via Lemma~\ref{lem: partitionSrgIntoTwoSrgs} (see Appendix~\ref{sec:appendix}).
	Altogether, there is no partition of the Schl\"afli into two strongly regular graphs.
	
	With the same approach, we prove that the McLaughlin graph does not allow for such a partition either.
	We refer to Appendix~\ref{sec:appendix} for the detailed calculations.
\end{proof}

\begin{corollary}
	\label{lem:sporadic-graphs}
	If $(G, \colouring)$ is \tupleRegular{3} and $G[R]$ is a McLaughlin graph or a Schl\"afli graph, then $R$ and $B$ are homogeneously connected.
\end{corollary}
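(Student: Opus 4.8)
The plan is to argue by contradiction in exact analogy with the proof of Corollary~\ref{lem:rook-graph}. First I would suppose that $R$ and $B$ are not homogeneously connected and show that this forces a blue vertex whose red neighbourhood splits $R$ non-trivially. Since $(G, \chi)$ is \tupleRegular{3} it is in particular \tupleRegular{2}, so the single-vertex count $\lambda^R(b) = |N^R(b)|$ is the same for all $b \in B$. If no blue vertex satisfied $\emptyset \subsetneq N^R(b) \subsetneq R$, then every blue vertex would have $N^R(b) \in \{\emptyset, R\}$, and by constancy of this count either $N^R(b) = \emptyset$ for all $b$ or $N^R(b) = R$ for all $b$; both cases say that $R$ and $B$ are homogeneously connected. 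Hence, failing homogeneous connection, there is some $b \in B$ with $\emptyset \subsetneq N^R(b) \subsetneq R$.

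Next I would feed this $b$ into Part~\eqref{itm: properties of colour subconstituents} of Lemma~\ref{lem: safeOperations}. Because $(G, \chi)$ is \tupleRegular{3} and $3 \geq 2$, that lemma yields that the induced monochromatic subgraphs $G[N^R(b)]$ and $G[R \setminus N^R(b)]$ are both \tupleRegular{2}, that is, strongly regular (allowing the degenerate complete and edgeless cases permitted by the definition used here). Since $\emptyset \subsetneq N^R(b) \subsetneq R$, the pair $(N^R(b), R \setminus N^R(b))$ is a partition of $V(G[R])$ into two nonempty sets, each inducing a strongly regular graph.

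Finally I would invoke Lemma~\ref{lem: Sch HS McL do not allow for regular partition}, which states precisely that neither the McLaughlin graph nor the Schl\"afli graph admits a partition into two strongly regular induced subgraphs. As $G[R]$ is one of these two graphs by hypothesis, this contradicts the partition produced above, so no such $b$ exists and the claim follows.

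The corollary itself is a short formal deduction; essentially all of the substantive work has been discharged into Lemma~\ref{lem: Sch HS McL do not allow for regular partition}, whose proof carries out the parameter-feasibility analysis through Lemma~\ref{lem: partitionSrgIntoTwoSrgs}. Within the corollary the only point demanding a little care is the first step --- translating ``not homogeneously connected'' into the existence of a single blue vertex with a proper red neighbourhood --- which relies on the constancy of $\lambda^R(b)$ across $b \in B$ guaranteed by \tupleRegular{2} regularity; beyond this bookkeeping I expect no genuine obstacle.
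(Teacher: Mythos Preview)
Your argument is correct and follows exactly the paper's approach: assume a blue vertex $b$ with $\emptyset \subsetneq N^R(b) \subsetneq R$, apply Lemma~\ref{lem: safeOperations}\eqref{itm: properties of colour subconstituents} to get a partition of $G[R]$ into two strongly regular parts, and derive a contradiction from Lemma~\ref{lem: Sch HS McL do not allow for regular partition}. You even spell out, more carefully than the paper, why such a $b$ must exist (constancy of $\lambda^R(\{b\})$ over $b\in B$, which in fact already follows from \tupleRegular{1}).
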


\begin{proof}
	Suppose that there exists some blue vertex $b \in B$ with $\emptyset \subsetneq N^R(b) \subseteq R$.
	By Lemma~\ref{lem: safeOperations}\eqref{itm: properties of colour subconstituents}, both graphs $G[N^R(b)]$ and $G[R \setminus N^R(b)]$ are strongly regular.
	This contradicts Lemma~\ref{lem: Sch HS McL do not allow for regular partition}.
\end{proof}

\begin{proof}[Proof of Theorem~\ref{thm: 2col primitive graphs main}]
	Assume that $G[R]$ is primitive and \tupleRegular{4}.
	According to the classification of \tupleRegular{4} graphs (cf.\ \cite{cameron:strongly-regular-garphs}), either $G[R]$ or its complement is isomorphic to $C_5$, $R_3$, the Schl\"afli graph, or the McLaughlin graph.
	The claim follows by applying Lemma~\ref{lem:cricle-components}, Corollary~\ref{lem:rook-graph}, or Corollary~\ref{lem:sporadic-graphs} to $(G, \chi)$ or its complement.
\end{proof}

\subsection{Colour classes inducing imprimitive graphs}
\label{sec: imprimitive graphs induced by colour classes}

In this subsection, we prove the following theorem.
\begin{theorem}\label{thm: imprimitive graphs two colours main}
	If $(G, \chi)$ is \tupleRegular{3} with $G[R] \cong s_RK_{t_R}$ and $G[B] \cong s_BK_{t_B}$, then
	\begin{enumerate}[(i)]
		\item $R$ and $B$ are homogeneously connected, or
		\item $G[R]$ and $G[B]$ each are edgeless or complete and $R$ and $B$ are matching-connected, or \label{itm: match}
		\item \label{itm: hom blowup} $(G, \chi)$ is a homogeneous blow-up, or
		\item $(G, \chi)$ is an extended Hadamard graph.
	\end{enumerate}
\end{theorem}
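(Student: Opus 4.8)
The plan is to analyse how the two clique systems $G[R]\cong s_RK_{t_R}$ and $G[B]\cong s_BK_{t_B}$ can be joined, and to separate the four outcomes according to how rigid the bipartite connection between red and blue is. First I would dispose of case~(i): assume $R$ and $B$ are not homogeneously connected, so there is a blue vertex $b$ with $\emptyset\subsetneq N^R(b)\subsetneq R$. Applying Lemma~\ref{lem: safeOperations}\eqref{itm: properties of colour subconstituents}, both $G[N^R(b)]$ and $G[R\setminus N^R(b)]$ are \tupleRegular{2}, hence strongly regular induced subgraphs of $s_RK_{t_R}$; by an argument analogous to Lemma~\ref{lem: inducedSubgraphsOfRook} each of them must again be a disjoint union of equal-sized cliques. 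Since the cliques of $N^R(b)$ sit inside the cliques of $G[R]$, this forces a uniform intersection pattern: $b$ meets each red clique in either $0$ or exactly $a$ vertices, and moreover either $a=t_R$ (so $b$ sees whole cliques) or $b$ meets all $s_R$ red cliques. A short count using that every blue vertex has the same number of red neighbours then shows $a$ is a global constant, and the symmetric statement holds for each red vertex and the blue cliques with a constant $a'$.

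Next I would separate off the blow-up case~(iii). The relevant dichotomy is whether the vertices inside a red clique are distinguishable by their blue neighbourhoods. If all vertices of each red clique have the same neighbourhood outside $R$, then contracting each red clique to a single vertex yields a coloured graph in which the red class is independent, and $(G,\chi)$ is exactly its homogeneous blow-up (using $t_R\geq 2$); symmetrically for $B$. Hence from now on I may assume that $(G,\chi)$ is not a blow-up, i.e.\ no colour clique is homogeneously attached. Examining the traces of the blue vertices on a single red clique $C$ in this reduced situation, \tupleRegular{2}- and \tupleRegular{3}-regularity turn these traces into a $2$- and in fact a $3$-design on the $t_R$ points of $C$ with block size $a$; solving the design equations together with the integrality conditions coming from \eqref{eq: parameterConditionSRGs} should force $a=1$ and $t_R=2$, so that each red clique is a single edge whose two endpoints have disjoint, complementary blue neighbourhoods. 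By symmetry $t_B=2$ as well, unless one of the classes is independent.

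The heart of the argument is then a design-theoretic reduction using Hughes' theorem \eqref{eq: degenerate symm 3-designs}. In the purely bipartite case $t_R=t_B=1$ I would form the incidence structure whose points are $R$ and whose blocks are $\{N^R(b)\mid b\in B\}$. Here \tupleRegular{2}-regularity makes this a $2$-design, while the corresponding statement on the blue side says that any two blocks meet in a constant number of points; together these characterise a \emph{symmetric} $2$-design, and \tupleRegular{3}-regularity upgrades it to a $3$-design. By \eqref{eq: degenerate symm 3-designs} a symmetric $3$-design is degenerate, and degeneracy means precisely that $R$ and $B$ are homogeneously connected or matching-connected, giving cases~(i) and~(ii). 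In the remaining case $t_R=t_B=2$ with complementary edge-cliques, the choice of which of $c_j^+,c_j^-$ a given red vertex $r_i^+$ sees defines a sign matrix $H\in\{-1,1\}^{n\times n}$; the amply-regular parameters \eqref{eq: hadGraph amply parameters} and the triple-intersection count \eqref{eq: 3 indep vertices have same nb of nbs} forced by \tupleRegular{3}-regularity translate into pairwise orthogonality of the rows of $H$, so $H$ is a Hadamard matrix and $(G,\chi)\cong\extendedHadamardGraph(H)$, which is case~(iv).

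The step I expect to be the main obstacle is the reduction to clique size at most $2$ and the reconstruction of the Hadamard matrix in case~(iv): one must rule out all mixed or larger clique sizes ($t_R\geq 3$, or $t_R\neq t_B$) by pushing the design and integrality constraints, and then verify that the complementary-neighbourhood structure of the size-$2$ cliques genuinely assembles into an orthogonal $\pm1$ matrix and that the recovered extension edges $r_i^+r_i^-$ and $c_j^+c_j^-$ match those of $\extendedHadamardGraph(H)$. A secondary subtlety is bookkeeping the two pairing structures simultaneously (the red cliques and the blue cliques playing the role of the groups of a finer incidence structure), so that Hughes' theorem is applied to the correct symmetric design rather than to the finer clique-refined one.
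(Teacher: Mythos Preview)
Your overall case split is the same as the paper's, and your treatment of cases~(i)--(iii) is essentially identical: cases~(i) and~(ii) go through the symmetric $3$-design on $R$ with blocks $\{N^R(b):b\in B\}$ and Hughes' theorem, and case~(iii) is exactly the observation that if each blue vertex sees only whole red cliques then one may contract. Where your proposal diverges from the paper, and where there is a genuine gap, is the reduction to $t_R=2$ in case~(iv).

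Your claim that the traces $\{N(b)\cap C:b\in B\}$ on a single red clique $C$ form a $3$-design is correct (three vertices of $C$ induce a red triangle, so $\lambda^B$ is constant). But this design is not symmetric: it has $t_R$ points and $|B|=s_Bt_B$ blocks, so neither Hughes' theorem nor the usual ``design equations plus integrality'' forces $a=1$ or $t_R=2$; there are non-degenerate $3$-designs of arbitrarily large order. The paper does \emph{not} obtain the Hadamard conclusion via a symmetric $3$-design. Instead it restricts to one red clique $C^R$ and one blue clique $C^B$, obtains a symmetric $2$-design (Fisher gives $t_R=t_B$), and then observes that this design is \emph{not} in general a $3$-design, because non-adjacent blue vertices from different cliques do not have a uniform common-neighbour count in $C^R$. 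It is precisely this failure (Claim~2 of Lemma~\ref{lem: das Lemma mit den Claims}) that drives the argument: from it one deduces $t=t_R/2$ (Claim~3), then that two non-adjacent blue vertices have either identical or disjoint traces in every red clique (Claim~4), which forces $\lambda^R(U_4)=0$ and hence $t=1$, $t_R=2$ (Claim~5). Your outline misses this mechanism entirely; replacing it by ``solving the design equations'' will not work, and the step you flag as the main obstacle is exactly where the proposal currently has no argument.
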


We first consider the case that both colour classes are edgeless or complete.
This allows us to interpret the graph $(G, \chi)$ as an incidence graph of a structure. 

\begin{lemma}\label{lem: imprimitive both edgeless or complete}
	Let $(G, \chi)$ be \tupleRegular{3}.
	If each of the graphs $G[R]$ and $G[B]$ is edgeless or complete, then $R$ and $B$ are either homogeneously connected or matching-connected.
\end{lemma}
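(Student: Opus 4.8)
The plan is to model the bipartite adjacency between $R$ and $B$ as a combinatorial design and then invoke Hughes' theorem on symmetric $3$-designs. The first step is to observe that, because $G[R]$ and $G[B]$ are each edgeless or complete, any two subsets of $R$ (respectively of $B$) of the same cardinality induce isomorphic coloured subgraphs. Consequently, once $(G,\chi)$ is \tupleRegular{3}, the values $\lambda^B(U)$ for $U \subseteq R$ and $\lambda^R(W)$ for $W \subseteq B$ with $|U|,|W|\le 3$ depend only on the cardinalities of $U$ and $W$. I therefore pass to the incidence structure $\mathcal{D}$ whose point set is $P = B$ and which has one block $N^B(r) = N_G(r)\cap B$ for every $r \in R$ (as a multiset). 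Since the connection type of $R$ and $B$ depends only on the edges between the two classes, the incidence graph of $\mathcal{D}$ is exactly the bipartite part of $G[R\cup B]$; by the observation following the definition of designs, $\mathcal{D}$ is degenerate if and only if $R$ and $B$ are homogeneously connected or matching-connected, which is precisely the desired conclusion.

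Next I read off the parameters of $\mathcal{D}$. Applying \tupleRegular{1}, \tupleRegular{2} and \tupleRegular{3} to single red vertices, to pairs of blue vertices, and to triples of blue vertices (all isomorphic as coloured subgraphs by the observation above) shows that all blocks share a common size $k = \lambda^B(\{r\})$, that every point lies in a common number of blocks, that every two points lie in a common number $\lambda_2$ of blocks, and that every three points lie in a common number $\lambda_3$ of blocks. If $k \in \{0,|B|\}$ the two classes are homogeneously connected, and if $k \in \{1,|B|-1\}$ the structure $\mathcal{D}$ is degenerate; in either case we are done. Hence I may assume $2 \le k \le |B|-2$, whence any block of size $k\ge 2$ witnesses $\lambda_2 \geq 1$, so $\mathcal{D}$ is a genuine $2$-design, and if $k \geq 3$ it is even a genuine $3$-design with $\lambda_3 \geq 1$.

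To apply Hughes' theorem I must show that $\mathcal{D}$ is \emph{symmetric}, that is, that the number of blocks equals the number of points, $|R| = |B|$. Running the entire construction with the roles of $R$ and $B$ interchanged yields a dual incidence structure which, by the same regularity argument, is also a non-trivial $2$-design (the \tupleRegular{3} hypothesis forces the dual replication number away from its degenerate values and yields a positive dual block-intersection number). Fisher's inequality applied to both structures then gives $|R| \geq |B|$ and $|B| \geq |R|$, hence $|R| = |B| =: v$ and $\mathcal{D}$ is symmetric. Now if $k \geq 3$, then $\mathcal{D}$ is a symmetric $3$-design, so by \eqref{eq: degenerate symm 3-designs} it must be degenerate, contradicting $2 \le k \le v-2$; and the remaining value $k=2$ is excluded by the symmetric $2$-design identity $\lambda_2(v-1)=k(k-1)=2$, which is impossible for $v \geq 4$, while $v \leq 3$ forces $k$ into the degenerate range. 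Thus $2 \le k \le v-2$ is untenable, $k$ is degenerate, and $R$ and $B$ are homogeneously connected or matching-connected.

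The main obstacle I anticipate is the passage to a symmetric design: obtaining $|R|=|B|$ requires that both $\mathcal{D}$ and its dual be non-trivial $2$-designs, which in turn forces a careful treatment of the boundary parameters $k \in \{1,2,v-2,v-1\}$ so that Fisher's inequality and Hughes' theorem are only ever applied to genuine designs. Everything else — translating \tupleRegular{3} into block sizes, replication numbers, and pairwise and triple-wise intersection numbers — is routine once the edgeless-or-complete hypothesis is used to ensure that equicardinal vertex subsets of a colour class are always isomorphic.
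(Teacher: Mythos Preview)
Your proposal is correct and follows the same approach as the paper: interpret the bipartite adjacency as an incidence structure and invoke Hughes' theorem on symmetric $3$-designs. The paper's proof is terser---it simply asserts that $3$-tuple regularity makes the structure a symmetric $3$-design and then reads off $k\in\{0,1,|R|-1,|R|\}$---whereas you explicitly supply the missing justification for symmetry (Fisher's inequality applied to the structure and its dual) and separately dispose of the boundary value $k=2$; both additions are sound and arguably make the argument more complete than the paper's version.
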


\begin{proof}
	Consider the incidence structure $\mathcal{I}$ with point set
	$R$ and block multiset~$\{\!\{N^R(b) \mid b \in B\}\!\}$.
	The 3-tuple regularity of $(G, \colouring)$ implies that $\mathcal{I}$ is a symmetric 3-design.
	It follows with~\eqref{eq: degenerate symm 3-designs} that $k \in \{0,1,|R|-1,|R|\}$, where $k$ denotes the size of a block of $\mathcal{I}$.
	If $k \in \{0, |R|\}$, then $R$ and $B$ are homogeneously connected.
	Otherwise $k \in \{1, |R|-1\}$, that is, the classes $R$ and $B$ are matching-connected.
\end{proof}

Suppose $b \in B$.
If $R$ and $B$ are not homogeneously connected, then both graphs $G[N^R(b)]$ and $G[R\setminus N^R(b)]$ exist and are strongly regular
by Lemma~\ref{lem: safeOperations}.\eqref{itm: properties of colour subconstituents}.
It follows that $G[N^R(b)]$ and $G[R\setminus N^R(b)]$ are disjoint unions of cliques.
In particular, either
\begin{align}
	&G[N^R(b)] \cong sK_{t_R}~\text{for some $0 < s < s_R$},~\text{or} \label{eq: t1_eq_t2} \\
	&G[N^R(b)] \cong s_RK_t~\text{for some $0 <t < t_R$}. \label{eq: s1_eq_s2}
\end{align}
Note that the values $s$ and $t$ are independent of the choice of $b$ since $(G, \chi)$ is \tupleRegular{3}.
In the following Lemma, we prove that Theorem~\ref{thm: imprimitive graphs two colours main} holds if we assume~\eqref{eq: t1_eq_t2}.

\begin{lemma}\label{lem: bichrom blowup}
	Let $(G, \colouring)$ be \tupleRegular{3} with $G[R] \cong s_RK_{t_R}$ and $G[B] \cong s_BK_{t_B}$.
	Suppose $b \in B$.
	If $\firstRedNbGraph{G}{b} \cong sK_{t_R}$ for some $0 < s < s_R$, then $(G, \colouring)$ can be obtained by a homogeneous blow-up of the independent red colour class of some graph $(H, \colouring_H)$.
\end{lemma}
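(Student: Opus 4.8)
The plan is to contract each red clique to a single vertex and to show that blowing the resulting graph up again reproduces $(G, \colouring)$. The decisive first step is to establish that \emph{every} blue vertex connects homogeneously to every red clique. By the remark preceding the lemma, which is a consequence of \tupleRegular{3}, the parameter $s$ does not depend on the chosen vertex, so every $b \in B$ satisfies $\firstRedNbGraph{G}{b} \cong sK_{t_R}$. Since each connected component of $G[R] \cong s_RK_{t_R}$ is a clique on exactly $t_R$ vertices, and each of the $s$ cliques of $\firstRedNbGraph{G}{b}$ is a $t_R$-clique contained in $R$, every clique of $\firstRedNbGraph{G}{b}$ coincides with a full red component. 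Hence $N^R(b)$ is a union of $s$ red cliques; equivalently, for every blue vertex $b$ and every red clique $C$ we have $C \subseteq N_G(b)$ or $C \cap N_G(b) = \emptyset$. We may assume $t_R \geq 2$, since a homogeneous blow-up replaces vertices by cliques of size at least $2$ and the degenerate case $t_R = 1$, in which $G[R]$ is edgeless, is not at issue.

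Next I would build the witness graph $(H, \colouring_H)$. Let $C_1, \dots, C_{s_R}$ be the red cliques, that is, the components of $G[R]$. I set $V(H) \coloneqq \{\bar C_1, \dots, \bar C_{s_R}\} \cup B$, colour each $\bar C_i$ red and retain the colours on $B$, and define the edges of $H$ as follows: there are no edges among the $\bar C_i$, so that the red class of $H$ is independent; $H[B] \coloneqq G[B]$; and $\bar C_i b \in E(H)$ precisely when $C_i \subseteq N_G(b)$. By the first step this rule is well defined, as membership is independent of the chosen representative of $C_i$.

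It then remains to verify that the homogeneous blow-up of $(H, \colouring_H)$ at its red colour class with clique size $t = t_R$ returns $(G, \colouring)$. I identify the $t_R$-clique replacing $\bar C_i$ with the original clique $C_i$. The blow-up turns each $C_i$ into a clique and keeps distinct cliques $C_i, C_j$ non-adjacent, which matches the adjacencies among red vertices of $G$; the adjacencies among blue vertices are unchanged because $H[B] = G[B]$; and for $r' \in C_i$ the blow-up prescribes the neighbourhood $N_H(\bar C_i) \cup (C_i \setminus \{r'\})$, where $N_H(\bar C_i) = \{b \in B : C_i \subseteq N_G(b)\} = N^B(r')$ by the homogeneous connection and $C_i \setminus \{r'\} = N^R(r')$. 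Thus every vertex has the same neighbourhood in the blow-up as in $G$, and the two coloured graphs coincide.

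The main obstacle is the first step, namely upgrading the single-vertex hypothesis to the statement that \emph{all} blue vertices connect homogeneously to the red cliques. Counting degrees and common neighbours with \tupleRegular{1} and \tupleRegular{2} only yields $\lambda^B(\{r_1,r_2\}) \leq \lambda^B(\{r_1\})$ for a red edge $\{r_1,r_2\}$; this inequality is an equality exactly when homogeneity holds, but the first two levels of regularity do not force equality. It is precisely here that the full strength of \tupleRegular{3}, encoded in the uniformity remark, is needed. Once uniform homogeneous connection is available, the construction and the verification above are routine bookkeeping.
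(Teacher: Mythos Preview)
Your proof is correct and follows essentially the same route as the paper's argument: first use~\tupleRegular{3} (via the remark preceding the lemma) to conclude that every blue vertex sees the red cliques homogeneously, then pass to a graph with one red vertex per clique and recognise $(G,\chi)$ as the blow-up. The only cosmetic difference is that the paper selects a representative $r_C\in C$ for each red clique and sets $(H,\chi_H)\coloneqq (G,\chi)[B\cup R_H]$, whereas you form an explicit quotient with abstract vertices $\bar C_i$; the two constructions yield isomorphic $(H,\chi_H)$, and your verification of the blow-up identity spells out what the paper leaves to the reader.
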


\begin{proof}
	Since $(G, \colouring)$ is \tupleRegular{3}, it holds that $\firstRedNbGraph{G}{b'} \cong sK_{t_R}$ for each $b' \in B$.
	For each maximal red clique $C$ of $(G, \colouring)$ choose a vertex $r_C \in C$.
	Set $R_H \coloneqq \{r_C \mid \text{$C$ is a maximal red clique}\}$.
	The graph $(H, \colouring_H) \coloneqq (G, \colouring)[B \cup R_H]$ satisfies the required conditions.
\end{proof}

Finally, we assume~\eqref{eq: s1_eq_s2} and prove that $(G, \chi)$ is an extended Hadamard graph:

\begin{lemma}\label{lem: das Lemma mit den Claims}
	Let $(G, \colouring)$ be \tupleRegular{3} with
	$G[R] \cong s_RK_{t_R}$ and $G[B] \cong s_BK_{t_B}$.
	If $\firstRedNbGraph{G}{b} \cong s_RK_{t}$ for some $b \in B$ and $0 < t <t_R$,
	then $(G, \colouring)$ is an extended Hadamard graph.
\end{lemma}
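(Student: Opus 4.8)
The plan is to show that $(G,\colouring)$ has exactly the skeleton of an extended Hadamard graph: both colour classes are disjoint unions of \emph{edges} (so $t_R=t_B=2$), each blue vertex meets every red edge in exactly one endpoint and each red vertex meets every blue edge in exactly one endpoint (so $t=1$ and its blue analogue $t'=1$), the two classes contain the same number $n\coloneqq s_R=s_B$ of edges, and the resulting sign pattern is a Hadamard matrix. First I would record that the hypothesis is symmetric in the two colours. By Lemma~\ref{lem: safeOperations}\eqref{itm: properties of colour subconstituents} with the roles of $R$ and $B$ exchanged, $G[N^B(p)]$ is \tupleRegular{2} for every red vertex $p$, and being an induced subgraph of $G[B]\cong s_BK_{t_B}$ it is a disjoint union of cliques, hence isomorphic to $s'K_{t'}$. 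If $s'<s_B$ we are in the blue analogue of~\eqref{eq: t1_eq_t2}, and Lemma~\ref{lem: bichrom blowup} applied to the blue class exhibits $(G,\colouring)$ as a homogeneous blow-up; since an extended Hadamard graph is reduced this case is subsumed under conclusion~\eqref{itm: hom blowup} of Theorem~\ref{thm: imprimitive graphs two colours main}, so we may assume the blue analogue of~\eqref{eq: s1_eq_s2}, namely $G[N^B(p)]\cong s_BK_{t'}$ with $0<t'<t_B$. Counting the edges between one red and one blue clique then gives $t_Rt'=t_Bt$.

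The heart of the proof --- and the step I expect to be the main obstacle --- is to force $t=t'=1$ and $t_R=t_B=2$. For a fixed red clique $C$ the assignment $b\mapsto N^R(b)\cap C$ makes the blue vertices the blocks of a design on the $t_R$ points of $C$, and since all pairs and triples inside $C$ are edges, respectively triangles, \tupleRegular{3} prescribes its pair- and triple-indices. The engine throughout is that two induced triples of the same coloured isomorphism type must have equal $\lambda^R$- and $\lambda^B$-values. Comparing independent triples $\{p,p',q\}$ with $p,p'$ red in distinct cliques and $q$ blue adjacent to neither shows that the number of common blue neighbours of $p,p'$ inside any one blue clique is either $0$ or a fixed positive constant, and symmetrically for blue pairs; feeding this ``concentration'' back into the within-clique design forces its blocks to be degenerate, so $t\in\{1,t_R-1\}$. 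Complementing inside the red colour class (a regularity-preserving operation) we may take $t=1$, and likewise $t'=1$, whence $t_R=t_B$ by the edge count. With $t=t'=1$ the bipartite graph between any red clique $C$ and any blue clique $D$ is a perfect matching, so each pair $(C,D)$ defines a bijection of their vertex sets. A last comparison of isomorphic triples rules out $t_R=t_B\ge 3$: for such cliques one can exhibit two independent red--red--blue triples, one in which the two red vertices are matched into a common vertex of $q$'s blue clique and one in which they are not, and these isomorphic triples receive different values of $\lambda^B$ --- contradicting \tupleRegular{3}. Hence $t_R=t_B=2$ and $t=t'=1$.

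With the skeleton in hand, reading off the matrix is routine. Writing each red clique as $C_i=\{r_i^+,r_i^-\}$ and each blue clique as $D_j=\{c_j^+,c_j^-\}$, set $H_{ij}=1$ if the matching between $C_i$ and $D_j$ pairs $r_i^+$ with $c_j^+$ (equivalently $r_i^-$ with $c_j^-$), and $H_{ij}=-1$ otherwise; by construction the red--red, blue--blue, and red--blue edges of $(G,\colouring)$ are exactly those of $G'(H)$ with the $r$-vertices red and the $c$-vertices blue. For distinct red cliques the non-adjacent pair $\{r_i^+,r_{i'}^+\}$ has $|\{j:H_{ij}=H_{i'j}\}|$ common blue neighbours while $\{r_i^+,r_{i'}^-\}$ has $|\{j:H_{ij}\neq H_{i'j}\}|$; these two pairs have the same coloured isomorphism type, so \tupleRegular{2} forces $|\{j:H_{ij}=H_{i'j}\}|=|\{j:H_{ij}\neq H_{i'j}\}|$, i.e.\ the rows of $H$ are orthogonal and $H$ is a Hadamard matrix. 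Finally, orthogonality of the rows gives $s_R\le s_B$ and orthogonality of the columns gives $s_B\le s_R$, so $H$ is square and $(G,\colouring)\cong\extendedHadamardGraph(H)$, as required.
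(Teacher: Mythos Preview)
Your overall plan and the endgame are sound: once you have $t=t'=1$, the independent red--red--blue triple comparison you describe does rule out $t_R\ge 3$, and the orthogonality argument for $H$ is essentially the paper's. The difficulties are earlier, and there are two genuine gaps.

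\textbf{The blue symmetry.} You split on whether $G[N^B(r)]\cong s'K_{t_B}$ with $s'<s_B$ or $G[N^B(r)]\cong s_BK_{t'}$, and dispose of the first alternative by saying it is a blow-up and hence ``subsumed under conclusion~\eqref{itm: hom blowup} of Theorem~\ref{thm: imprimitive graphs two colours main}''. But Theorem~\ref{thm: imprimitive graphs two colours main} is proved \emph{using} the present lemma, and in any case the lemma asserts unconditionally that $(G,\chi)$ is an extended Hadamard graph: if the blow-up case could occur here the lemma would be false, not merely covered elsewhere. The paper avoids the split entirely. For fixed maximal cliques $C^R,C^B$ it observes that the incidence structure with point set $C^B$ and block multiset $\{\!\{N(r)\cap C^B : r\in C^R\}\!\}$ is a $2$-design with $t_R$ blocks; Fisher's inequality gives $t_B\le t_R$, and by symmetry $t_B=t_R$, so the design is symmetric with block size $t$ and hence $G[N^B(r)]\cong s_BK_t$ for all $r\in R$ (Claim~1).

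\textbf{Forcing $t\in\{1,t_R-1\}$.} Your design with blocks $\{N^R(b)\cap C:b\in B\}$ is indeed a $3$-design by \tupleRegular{3}, but it has $s_Bt_B$ blocks on $t_R$ points and is not symmetric, so Hughes' theorem~\eqref{eq: degenerate symm 3-designs} does not apply; restricting the blocks to a single blue clique gives a symmetric design, but only a $2$-design. The ``concentration'' you invoke for non-adjacent red pairs across cliques is also not free: it needs proof, and it does not obviously feed back into degeneracy of a within-clique design. The paper's route here is quite different. It first proves (Claim~2) that there is \emph{no} constant $\lambda_n$ such that every non-adjacent blue pair has exactly $\lambda_n$ common neighbours in every red clique, via a double count yielding $t=t_R(\lambda_n-\lambda^R(U_4))+\lambda^R(U_4)$ and a short case check. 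This forces $t=t_R/2$ (Claim~3), then an all-or-nothing dichotomy $|C^R\cap N(b_1)\cap N(b_2)|\in\{0,t_R/2\}$ (Claim~4), and finally $\lambda^R(U_4)=0$, whence $t(t-1)=(t_R-1)\lambda^R(U_4)=0$ and $t=1$, $t_R=2$ (Claim~5). Incidentally, ``complementing inside the red colour class'' would turn $G[R]$ into a complete multipartite graph and leave the lemma's hypotheses; complementing between $R$ and $B$ swaps $t\leftrightarrow t_R-t$ and $t'\leftrightarrow t_B-t'$ simultaneously, so it cannot set $t$ and $t'$ to $1$ independently.
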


\begin{proof}
	For $i \in \{1, \dots, 8\}$ let $U_i$ be a subset that induces a graph as depicted in Figure~\ref{fig: types}.
	Since $(G, \colouring)$ is \tupleRegular{3}, the values $\lambda^R(U_i)$ and $\lambda^B(U_i)$ are determined by the isomorphism type of $(G, \colouring)[U_i]$.
	\begin{figure}[h!]
		\centering
		\scalebox{0.8}{
			\begin{tikzpicture}[scale=1]
				\begin{scope}
					\draw
					(0,0) node (r1) [vertex, fill=darkred, color=darkred] {}
					(1,0) node (b1) [vertex, fill=darkblue, color=darkblue] {}
					(.5, -.7) node (lG1) {$(G, \colouring)[U_1]$}
					;
				\end{scope}
				
				\begin{scope}[shift = {(3,0)}]
					\draw
					(0,0) node (r1) [vertex, fill=darkred, color=darkred] {}
					(1,0) node (b1) [vertex, fill=darkblue, color=darkblue] {}
					(.5, -.7) node (lG1) {$(G, \colouring)[U_2]$};
					\draw[thick] (r1)--(b1);
				\end{scope}
				
				\begin{scope}[shift = {(6,0)}]
					\draw
					(0,0) node (r1) [vertex, fill=darkred, color=darkred] {}
					(0,.6) node (r2) [vertex, fill=darkred, color=darkred] {}
					(1,0) node (b1) [vertex, fill=darkblue, color=darkblue] {}
					(.5, -.7) node (lG2) {$(G, \colouring)[U_3]$}
					;
					\draw[thick] (r2)--(r1)--(b1);
				\end{scope}
				
				\begin{scope}[shift = {(9,0)}]
					\draw
					(0,0) node (r1) [vertex, fill=darkred, color=darkred] {}
					(1,0) node (b1) [vertex, fill=darkblue, color=darkblue] {}
					(1,.6) node (b2) [vertex, fill=darkblue, color=darkblue] {}
					(.5, -.7) node (lG2) {$(G, \colouring)[U_4]$};
					\draw[thick] (r1)--(b1)--(b2);
				\end{scope}
				
				\begin{scope}[shift = {(12,0)}]
					\draw
					(0,0) node (r1) [vertex, fill=darkred, color=darkred] {}
					(1,0) node (b1) [vertex, fill=darkblue, color=darkblue] {}
					(1,.6) node (b2) [vertex, fill=darkblue, color=darkblue] {}
					(.5, -.7) node (lG2) {$(G, \colouring)[U_5]$};
					\draw[thick] (b1)--(r1)--(b2);
				\end{scope}
				
				\begin{scope}[shift = {(0,-3)}]
					\draw
					(0,0) node (r1) [vertex, fill=darkred, color=darkred] {}
					(1,0) node (b1) [vertex, fill=darkblue, color=darkblue] {}
					(1,.6) node (b2) [vertex, fill=darkblue, color=darkblue] {}
					(.5, -.7) node (lG2) {$(G, \colouring)[U_6]$};
				\end{scope}
				
				\begin{scope}[shift = {(3,-3)}]
					\draw
					(1,0) node (b1) [vertex, fill=darkblue, color=darkblue] {}
					(1,.6) node (b2) [vertex, fill=darkblue, color=darkblue] {}
					(1,1.2) node () [vertex, fill=darkblue, color=darkblue] {}
					(.5, -.7) node (lG2) {$(G, \colouring)[U_7]$};
				\end{scope}
				
				\begin{scope}[shift = {(6,-3)}]
					\draw
					(1,0) node (b1) [vertex, fill=darkblue, color=darkblue] {}
					(1,.6) node (b2) [vertex, fill=darkblue, color=darkblue] {}
					(.5, -.7) node (lG2) {$(G, \colouring)[U_8]$};
				\end{scope}
				
				\begin{scope}[shift = {(9,-3)}]
					\draw
					(0,0) node (r1) [vertex, fill=darkred, color=darkred] {}
					(1,0) node (b1) [vertex, fill=darkblue, color=darkblue] {}
					(1,.6) node (b2) [vertex, fill=darkblue, color=darkblue] {}
					(.5, -.7) node (lG2) {$(G, \colouring)[U_9]$};
					\draw[thick] (r1)--(b1);
				\end{scope}
				
			\end{tikzpicture}
		}
		\caption{Some induced subgraphs of $(G, \colouring)$.}
		\label{fig: types}
	\end{figure}
	
	\medskip
	
	\noindent \textbf{Claim 1:} 
	For $X \in \{B, R\}$ let $C^X \subseteq X$ be a maximal clique in $G[X]$.
	The design with points $C^B$ and blocks $\{\!\{N(r)\cap C^B \mid r \in C^R\}\!\}$ is a symmetric 2-$(t_R, t, \lambda^R(U_4))$-design.
	In particular,~$t_B = t_R$ and $G[N^B(r)] \cong s_BK_{t}$ for each $r \in R$.
	
	\medskip
	
	\noindent \textit{Proof of Claim 1:}
	Let $b_1, b_2 \in C^B$ and  $r_1, r_2 \in C^R$ be four distinct vertices.
	It holds that
	\begin{align*}
		|N_G(r_1) \cap C^B| &= \lambda^B(U_1) = \lambda^B(U_2) + 1~\text{and}\\
		t = |N_G(b_1) \cap C^R| &= \lambda^R(U_1) = \lambda^R(U_2) + 1.
	\end{align*}
	Observe that $N_G(b_1) \cap C^R \neq N_G(b_2) \cap C^R$.
	(Otherwise all blue vertices adjacent to $b_1$ have the exactly same neighbours in $C^R$ which implies $t \in \{0,t_R\}$).
	Consequently, there exists a red vertex $r' \in C^R$ such that $(G, \colouring)[\{b_1, b_2, r'\}] \cong (G, \colouring)[U_4]$.
	Hence
	\begin{equation}
		|N_G(b_1) \cap N_G(b_2) \cap C^R| = \lambda^R(U_4).
	\end{equation}
	Interchanging the roles of $R$ and $B$, we obtain
	\begin{equation}
		|N_G(r_1) \cap N_G(r_2) \cap C^B| = \lambda^B(U_3).
	\end{equation}
	Altogether, the considered structure is a symmetric 2-$(t_R, t, \lambda^R(U_4))$-design.
	It follows with Fisher's inequality that $t_B = |C^B| = |C^R| = t_R$ and $G[N^B(r)] \cong s_BK_{t}$ for each $r \in R$.
	$\tinyqed$
	
	\medskip
	
	\noindent \textbf{Claim 2:} There does not exist a parameter $\lambda_n$ such that every pair of non-adjacent blue vertices has exactly $\lambda_n$ common neighbours in every maximal red clique.
	
	\medskip
	
	\noindent \textit{Proof of Claim 2:}
	Suppose towards a contradiction that every pair of non-adjacent blue vertices has $\lambda_n$ common neighbours in every maximal red clique.
	Let $C^B_1$ and $C^B_2$ be two distinct maximal blue cliques and let $C^R$ be a maximal red clique.
	Fix some $b_1 \in C^B_1$.
	Double-counting all vertex sets~$\{b_1,b_1',r\}$ that induce a triangle with $b_1' \in C^B_1$, $r \in C^R$ and all vertex sets~$\{b_1,r,b_2\}$ that induce a subgraph isomorphic to $(G, \colouring)[U_5]$ with $r \in C^R$ and $b_2 \in C^B_2$ yields
	\begin{equation} \label{eq: doublecounting triangles and paths}
		t_R\lambda_n = t^2~\text{and}~(t_R-1)\lambda^R(U_4) = t(t-1),
	\end{equation}
	respectively.
	This implies
	\[t = t_R(\lambda_n-\lambda^R(U_4)) + \lambda^R(U_4).\]
	If $\lambda_n > \lambda^R(U_4)$, then $t \geq t_R$ which contradicts the assumption $t < t_R$.
	If $\lambda_n = \lambda^R(U_4)$, then $t = \lambda^R(U_4)$ which implies that the two blue vertices in $U_4$ are false twins.
	This is a contradiction since the red vertex in $U_4$ is adjacent to exactly one of the two blue vertices.
	Finally, if $\lambda_n < \lambda^R(U_4)$, then $t \leq 0$ which contradicts the assumption $t > 1$.
	$\tinyqed$
	
	\medskip
	
	\noindent \textbf{Claim 3:}
	$t = \nicefrac{t_R}{2}$.
	
	\medskip
	\noindent \textit{Proof of Claim 3:}
	If $t < \frac{t_R}{2}$, then for each two non-adjacent blue vertices $b_1$ and $b_2$ and each maximal red clique $C^R$ there exists $r' \in C^R$ such that $(G, \colouring)[\{b_1, b_2, r'\}] \cong (G, \colouring)[U_6]$ and, hence, $|C^R \cap N_G(b_1) \cap N_G(b_2)| = \lambda^R(U_6)$ which contradicts Claim~2.
	If $t > \nicefrac{t_R}{2}$, then for each two non-adjacent blue vertices $b_1$ and $b_2$ and each maximal red clique $C^R$ it holds that $|C^R \cap N_G(b_1) \cap N_G(b_2)| = \lambda^R(U_5) + 1$, which is a contradiction to Claim~2.
	$\tinyqed$
	
	\medskip
	\noindent \textbf{Claim 4:} If $b_1$ and $b_2$ are two non-adjacent blue vertices and $C^R$ is a maximal red clique, then $|C^R \cap N_G(b_1) \cap N_G(b_2)| \in \left\{0, \nicefrac{t_R}{2}\right\}$.
	
	\medskip
	\noindent \textit{Proof of Claim~4:}
	It follows from Claim~1 that $N_G(b_1) \cap C^R \neq \emptyset$.
	For every $r' \in N_G(b_1) \cap C^R$ it holds that
	\begin{equation} \label{eq: claim4}
		|C^R \cap N_G(b_1) \cap N_G(b_2)| =
		\begin{cases}
			\lambda^R(U_5)+1 &\text{if~$r'$ is adjacent to $b_2$,}\\
			\lambda^R(U_9) &\text{otherwise.}	
		\end{cases}
	\end{equation}
	By Claim~2 we may conclude that
	\[\lambda^R(U_9) \neq \lambda^R(U_5)+1.\]
	This implies, that all $r' \in N_G(b_1)\cap C^R$ satisfy the same condition on the right side of~\eqref{eq: claim4}.
	Altogether, if $N_G(b_1) \cap N_G(b_2) \cap C^R \neq \emptyset$, then $N_G(b_1) \cap C^R = N_G(b_2) \cap C^R$. Claim~4 follows since $|N_G(b_1)\cap C^R| = \nicefrac{t_R}{2}$ by Claim~3.
	$\tinyqed$
	
	\medskip
	\noindent \textbf{Claim 5:} $t_R =2$, $t = 1$.
	
	\medskip
	\noindent \textit{Proof of Claim~5:}
	Let $C^R$ be a maximal red clique and $C^B_1$, $C^B_2$ be two distinct maximal blue cliques.
	Further let $b_1 \in C^B_1$.
	By Claims~2 and~4, there exist vertices $b_2$ and $b_2'$ in $C^B_2$ such that
	$C^R \cap N_G(b_1) = C^R \cap N_G(b_2)$ and
	$C^R \cap N_G(b_1) = C^R \setminus N_G(b_2')$.
	It follows that $\lambda^R(U_4) = 0$.
	As argued in the proof of Claim~1 (see Equation~\eqref{eq: doublecounting triangles and paths}), we have
	\[0 = \lambda^R(U_4)(t_R-1) = t(t-1).\]
	Hence, $t = 1$ and it follows with Claim~3 that $t_R=2$.
	$\tinyqed$
	
	\bigskip
	\noindent
	We obtain that $G[R] \cong s_RK_2$ and $G[B] \cong s_BK_2$ and the edges that join a blue 2-clique with a red 2-clique form a perfect matching.
	
	\bigskip
	\noindent
	It remains to show that $(G, \colouring)$ is an extended Hadamard graph.
	Choose an ordering $C_1^R, \dots, C_{s_R}^R$ of the red 2-cliques and an ordering $C_1^B, \dots, C_{s_B}^B$ of the blue 2-cliques of $(G, \colouring)$.
	Further, relabel the vertices of $G$ so that $C_i^R = \{r_i^+, r_i^-\}$ and $C_i^B = \{c_i^+, c_i^-\}$ for each $i \in \{1, \dots, s\}$.
	Consider the matrix $H \in \{-1, 1\}^{s \times s}$ with
	\[H_{i,j} = \begin{cases}
		\phantom{-}1 &\text{if $c_i^+$ is adjacent to $r_j^+$,} \\
		-1 &\text{otherwise}
	\end{cases} \]
	for all $i, j \in \{1, \dots, s\}$.
	Observe that
	\[|N_G(c_i^+) \cap N_G(c_j^+)| = \lambda^R(U_8) = |N_G(c_i^+) \cap N_G(c_j^-)|.\]
	Hence, the rows of $H$ are pairwise orthogonal.
	Interchanging the roles of $R$ and $B$, we obtain that the columns of $H$ are pairwise orthogonal.
	In particular, $s_R = s_B$, matrix $H$ is a Hadamard matrix, and $(G, \colouring)$ is isomorphic to the extended Hadamard graph of $H$.
\end{proof}

\begin{proof}[Proof of Theorem~\ref{thm: imprimitive graphs two colours main}]
	Let $(G, \chi)$ be \tupleRegular{3} with $G[R] \cong s_RK_{t_R}$ and $G[B] \cong s_BK_{t_B}$.
	Assume that $R$ and $B$ are not homogeneously connected.
	If both induced subgraphs $G[R]$ and $G[B]$ are edgeless or complete, then Lemma~\ref{lem: imprimitive both edgeless or complete} implies that $R$ and $B$ are matching-connected.
	Therefore, we may assume that $G[R]$ is neither edgeless nor complete, that is, $s_r, t_R \geq 2$.
	As argued above, either $G[N^R(b)]$ is isomorphic to $sK_{t_R}$ for some $0 < s < s_R$ or to $s_RK_t$ for some $0 < t < t_R$.
	In the first case, Lemma~\ref{lem: bichrom blowup} implies that $(G, \chi)$ is a blow-up.
	In the second case, $(G, \chi)$ is an extended Hadamard graph by Lemma~\ref{lem: das Lemma mit den Claims}.
\end{proof}

\subsection{Extended Hadamard graphs}

In this subsection we first classify the extended Hadamard graphs which are \ultraRegular{3} (Theorem~\ref{thm:high-regularity-of-hadamard-graphs}).
Our strategy for this is as follows. 
We use the classification of {\'O}~Cath\'ain~\cite{cathain2011} of Hadamard graphs whose row permutation groups are 3-transitive. The 3-transitivity of the automorphism group's action on rows (respectively columns) of the Hadamard matrix $H$ implies 3-transitivity of the induced action of~$\Aut(G(H))$ on the red (respectively blue) cliques of $G(H)$.
However, we need to analyse the action on the vertices rather than on the cliques. 

First, we need to show that every triple of independent red vertices can be mapped to every triple of independent red vertices. Due to the 3-transitive action on cliques, for this, we only need to show that within the subgroup of automorphisms fixing the first three cliques as sets, we can arbitrarily permute the vertices within each clique.
It thus suffices to find for each of the first 3 cliques an automorphism that swaps the two vertices within the clique but fixes vertices within the other two cliques. (The action on cliques other than the first three can be arbitrary.)
For Hadamard matrices that are Sylvester matrices this is shown in Lemma~\ref{lem: aut of sylverster} and for the matrix of rank 12 this is shown in Lemma~\ref{lem:automorphism-had12}\eqref{lem:automorphism-had12:3rows}).

Second, we need to show that for every pair of triples, each consisting of two non-adjacent red vertices and a blue vertex, there is an automorphism mapping one triple to the other.
Once we have shown transitivity of the automorphism group's action on pairs of non-adjacent red vertices in the first step, it suffices to show that among the subgroup of automorphisms that fix all points in the first two cliques pointwise, every blue vertex~$b$ can be mapped to all blue vertices that have the same neighbourhood within the first two red cliques as~$b$. For Sylvester matrices this is shown in Lemma~\ref{lem:aut-of-sylvester:2rows1column} and for the Matrix of rank 12 this is shown in Lemma~\ref{lem:automorphism-had12}\eqref{lem:automorphism-had12:2rows1column}.

We present our proof in terms of Hadamard matrices. A square matrix is \emph{monomial} if each row and each column contains exactly one non-zero entry.
We denote the set of all monomial matrices in $\{-1, 1\}^{s \times s}$ by $\mathcal{M}_s$.
Then $\mathcal{M}_s \times \mathcal{M}_s$ acts on the order-$s$ Hadamard matrices via $(A,B)H \coloneqq AHB^{-1}$.
If $H$ is a Hadamard matrix, then the \emph{automorphism group} $\Aut(H)$ of~$H$ is
the stabiliser of $H$ under this action.
An element of $\Aut(H)$ is an \emph{automorphism} of $H$.
Observe that $A \in \mathcal{M}_s$ has a unique factorisation $A = D_AP_A$ into a diagonal matrix $D_A$ and a permutation matrix $P_A$.
For a Hadamard matrix $H$ and $(A, B) \in \Aut(H)$ set $\nu(A,B) = P_A$. 
Set $\mathcal{A}(H) \coloneqq \nu(\Aut(H))$ to be the induced row permutation group.

\begin{theorem}[\hspace{1sp}\cite{cathain2011}]
	\label{thm: 3-transitive-hadamard-matrices}
	If $H \in \{-1,1\}^{s\times s}$ is a Hadamard matrix such that $\mathcal{A}(H)$ is 3-transitive, then $H$ is equivalent to a Sylvester matrix or $s = 12$.
\end{theorem}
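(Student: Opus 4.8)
The plan is to derive the theorem from the Classification of Finite Simple Groups, via the resulting classification of finite $3$-transitive permutation groups, and then to eliminate all but the two asserted families using arithmetic and structural constraints coming from the Hadamard condition. Observe first that $\mathcal{A}(H)$ acts $3$-transitively on the $s$ rows, and that $s$ is the order of a Hadamard matrix; hence $s \in \{1,2\}$ or $4 \mid s$. The cases $s \in \{1,2\}$ are immediate, since the only Hadamard matrix of such order is (equivalent to) a Sylvester matrix, so I would assume $s \geq 4$ and $4 \mid s$ from here on.

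By the classification of $3$-transitive groups, $\mathcal{A}(H)$ is one of: the symmetric group $S_s$ or the alternating group $A_s$; an affine group with $\mathrm{AGL}(d,2) \trianglelefteq \mathcal{A}(H) \leq \mathrm{A\Gamma L}(d,2)$ acting on $\mathbb{F}_2^{d}$, so $s = 2^{d}$; a projective group with $\mathrm{PGL}(2,q) \trianglelefteq \mathcal{A}(H) \leq \mathrm{P\Gamma L}(2,q)$ of degree $s = q+1$; or one of the Mathieu groups $M_{11}$ (degree $11$ or $12$), $M_{12}$ (degree $12$), $M_{22}$ or $M_{22}{:}2$ (degree $22$), $M_{23}$ (degree $23$), $M_{24}$ (degree $24$). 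The divisibility $4 \mid s$ eliminates the Mathieu groups of degree $11$, $22$, and $23$ at once, and restricts the projective family to $q \equiv 3 \pmod 4$; among the Mathieu groups only degrees $12$ and $24$ remain.

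The heart of the argument is then to eliminate the families not appearing in the conclusion and to pin down those that do. For the affine groups I would show that a Hadamard matrix admitting $\mathrm{AGL}(d,2)$ on its rows is forced, up to equivalence, to be the Sylvester matrix $\sylvester(2^{d})$: the $\mathbb{F}_2^{d}$-structure turns the rows into the additive group, the regular normal subgroup of translations identifies the matrix (after sign normalisation) with a group-developed matrix over $\mathbb{F}_2^{d}$, and orthogonality of rows forces the defining function to be a nondegenerate character pairing, whose associated sign pattern is exactly the Walsh--Hadamard one, i.e.\ $\sylvester(2^{d})$. The degree-$12$ case yields $s = 12$, the second admissible conclusion. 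It remains to rule out $S_s$ and $A_s$ for $s > 4$ (note that for $s = 4$ one has $\mathrm{AGL}(2,2) \cong S_4$, which is already the Sylvester case), the projective groups of degree $q+1$ with $q \equiv 3 \pmod 4$, and $M_{24}$ of degree $24$. Each of these I would exclude by a feasibility and counting argument on the induced action on the associated Hadamard $3$-design with parameters $3$-$(s,\tfrac{s}{2},\tfrac{s}{4}-1)$ (equivalently, on the $\pm$-pairs of rows): a group that is $3$-transitive but ``too large'', such as $A_s$, $S_s$, or $M_{24}$, forces more regularity than these design parameters admit, while for $\mathrm{PGL}(2,q)$ the point-stabiliser structure is incompatible with the block structure of the design except in degenerate cases.

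I expect the main obstacle to be twofold. First, establishing rigorously that the affine action forces the Sylvester matrix: one must pass carefully from the abstract $3$-transitive action to the concrete $\pm 1$ entries and use orthogonality to rule out every non-Sylvester group-developed candidate, which is where the representation-theoretic input—that the only suitable $\mathbb{F}_2^{d}$-invariant sign pattern is the Walsh--Hadamard one—must be made precise. Second, eliminating the projective and $M_{24}$ cases cleanly, since here divisibility alone does not suffice and one must instead invoke the incompatibility of the stabiliser structure of these groups with the combinatorics of the associated symmetric design; this step is the most dependent on detailed properties of the specific groups and of Hadamard $3$-designs.
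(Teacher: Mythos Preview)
The paper does not prove this theorem; it is quoted verbatim from \cite{cathain2011} and used as a black box in the proof of Theorem~\ref{thm:high-regularity-of-hadamard-graphs}. So there is no ``paper's own proof'' to compare against.

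Your outline is nonetheless the right strategy and matches the published argument in spirit: one invokes CFSG via the classification of finite $3$-transitive permutation groups, discards degrees not divisible by~$4$, identifies the affine case with the Sylvester matrices, and treats degree~$12$ separately. Two remarks on the details. First, your list of $3$-transitive groups omits the sporadic affine example $2^4{:}A_7$ of degree~$16$; this sits inside $\mathrm{AGL}(4,2)$, so it is absorbed by your affine analysis provided that analysis is carried out for any $3$-transitive group with an elementary abelian regular normal subgroup rather than for $\mathrm{AGL}(d,2)$ specifically. Second, your sketch for eliminating $A_s$, $S_s$, $\mathrm{PGL}(2,q)$, and $M_{24}$ is the genuinely hard part and is where your write-up is vaguest: the Hadamard $3$-design argument is the correct lever, but ``too much regularity'' and ``incompatible stabiliser structure'' need to be made concrete (for instance, for $A_s$ and $S_s$ with $s>4$ one must show no Hadamard matrix has its full row set permuted by the symmetric group, which is a nontrivial calculation on the kernel of $\nu$). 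These are exactly the obstacles you flagged, and resolving them cleanly is the content of \cite{cathain2011}.
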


By the theorem, we only need to consider graphs which are constructed from Hadamard matrices or from the  (up to equivalence) unique Hadamard matrix of rank~12. By symmetry, these are also precisely the Hadamard whose column permutation groups act 3-transitively.

\begin{lemma}\label{lem: kronecker automorphisms of sylvester matrices}
	If $H \in \{-1, 1\}^{\{s\times s\}}$ is a Hadamard matrix and $(A, B) \in \Aut(H)$, then
	\begin{enumerate}[(i)]
		\item \label{itm: blow up aut dont change anything}
		$\left(\begin{pmatrix}
			1 & 0\\
			0 & 1	
		\end{pmatrix}
		\otimes
		A,
		\begin{pmatrix}
			1 & 0\\
			0 & 1	
		\end{pmatrix}
		\otimes
		B\right) \in \Aut\left(
		\sylvester(2) \otimes H
		\right)$ and
		\item \label{itm: blow up aut change the last columns with first colums}
		$\left(\begin{pmatrix}
			1 & 0\\
			0 & -1	
		\end{pmatrix}
		\otimes
		A,
		\begin{pmatrix}
			0 & 1\\
			1 & 0	
		\end{pmatrix}
		\otimes
		B\right) \in \Aut\left(
		\sylvester(2) \otimes H
		\right)$.
	\end{enumerate}
\end{lemma}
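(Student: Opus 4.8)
The plan is to verify both membership claims directly from the definition of $\Aut(H)$ together with the mixed-product property of the Kronecker product. Recall that $(A,B)\in\Aut(H)$ unpacks to $A,B\in\mathcal{M}_s$ and $AHB^{-1}=H$, which I would rewrite as the matrix identity $AH=HB$. More generally, a pair $(X,Y)$ belongs to $\Aut(M)$ for a Hadamard matrix $M$ exactly when $X,Y\in\mathcal{M}_{2s}$ and $XM=MY$. So for each of the two statements there are two things to check: that the displayed pair consists of monomial $\pm1$ matrices, and that it satisfies the defining identity with $M=\sylvester(2)\otimes H$.

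The membership in $\mathcal{M}_{2s}$ is the routine part. The Kronecker product of two monomial matrices is again monomial, and the Kronecker product of two matrices with entries in $\{-1,1\}$ again has all entries in $\{-1,1\}$. Since $A,B\in\mathcal{M}_s$ and each of the three $2\times 2$ factors $\bigl(\begin{smallmatrix}1&0\\0&1\end{smallmatrix}\bigr)$, $\bigl(\begin{smallmatrix}1&0\\0&-1\end{smallmatrix}\bigr)$ and $\bigl(\begin{smallmatrix}0&1\\1&0\end{smallmatrix}\bigr)$ lies in $\mathcal{M}_2$, all four Kronecker products appearing in the statement lie in $\mathcal{M}_{2s}$. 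For the defining identities I would then apply $(X_1\otimes X_2)(Y_1\otimes Y_2)=(X_1Y_1)\otimes(X_2Y_2)$, noting that the dimensions are compatible ($2\times 2$ against $s\times s$ throughout). For part~\eqref{itm: blow up aut dont change anything} this gives
\[
\Bigl(\bigl(\begin{smallmatrix}1&0\\0&1\end{smallmatrix}\bigr)\otimes A\Bigr)\bigl(\sylvester(2)\otimes H\bigr)=\sylvester(2)\otimes(AH),\qquad
\bigl(\sylvester(2)\otimes H\bigr)\Bigl(\bigl(\begin{smallmatrix}1&0\\0&1\end{smallmatrix}\bigr)\otimes B\Bigr)=\sylvester(2)\otimes(HB),
\]
and the two right-hand sides agree because $AH=HB$. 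For part~\eqref{itm: blow up aut change the last columns with first colums} the same computation yields $\bigl(\bigl(\begin{smallmatrix}1&0\\0&-1\end{smallmatrix}\bigr)\sylvester(2)\bigr)\otimes(AH)$ on the left and $\bigl(\sylvester(2)\bigl(\begin{smallmatrix}0&1\\1&0\end{smallmatrix}\bigr)\bigr)\otimes(HB)$ on the right.

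The only step that is not pure bookkeeping is the $2\times 2$ identity $\bigl(\begin{smallmatrix}1&0\\0&-1\end{smallmatrix}\bigr)\sylvester(2)=\sylvester(2)\bigl(\begin{smallmatrix}0&1\\1&0\end{smallmatrix}\bigr)$, since both products equal $\bigl(\begin{smallmatrix}1&1\\-1&1\end{smallmatrix}\bigr)$; conceptually it says that negating the bottom block-row of $\sylvester(2)\otimes H$ has the same effect as swapping its two block-columns. Once this is observed, the mixed-product rule together with $AH=HB$ forces the left and right factors to coincide, establishing the defining identity in both cases. I expect no genuine obstacle here: the lemma is a clean verification whose content is exactly the compatibility of the block structure $\sylvester(2)=\bigl(\begin{smallmatrix}1&1\\1&-1\end{smallmatrix}\bigr)$ with the automorphisms of the smaller matrix $H$, which is precisely what makes the Sylvester construction inductive.
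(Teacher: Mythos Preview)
Your proof is correct and follows essentially the same approach as the paper: both are direct verifications of the defining identity for $\Aut(\sylvester(2)\otimes H)$. You phrase the computation via the mixed-product rule $(X_1\otimes X_2)(Y_1\otimes Y_2)=(X_1Y_1)\otimes(X_2Y_2)$ and the rewriting $AH=HB$, whereas the paper expands the Kronecker products as $2\times 2$ block matrices and checks $XMY^{-1}=M$ directly; these are the same calculation in two notations.
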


\begin{proof}The statement follows with
	\begin{align*}
		\begin{pmatrix}
			A & 0\\
			0 & A
		\end{pmatrix}
		\begin{pmatrix}
			H & H\\
			H & -H
		\end{pmatrix}
		\begin{pmatrix}
			B & 0\\
			0 & B
		\end{pmatrix}^{-1}
		=
		\begin{pmatrix}
			AHB^{-1} & AHB^{-1}\\
			AHB^{-1} & -AHB^{-1}
		\end{pmatrix}
		=
		\begin{pmatrix}
			H & H\\
			H & -H
		\end{pmatrix}
	\end{align*} and
	\begin{align*}
		\begin{pmatrix}
			A & 0\\
			0 & -A
		\end{pmatrix}
		\begin{pmatrix}
			H & H\\
			H & -H
		\end{pmatrix}
		\begin{pmatrix}
			0 & B\\
			B & 0
		\end{pmatrix}^{-1}
		=
		\begin{pmatrix}
			AHB^{-1} & AHB^{-1}\\
			AHB^{-1} & -AHB^{-1}
		\end{pmatrix}
		=
		\begin{pmatrix}
			H & H\\
			H & -H
		\end{pmatrix}.
	\end{align*}
\end{proof}

\begin{lemma} \label{lem: aut of sylverster}
	Let $H=\sylvester(2^t)$ for some $t \geq 2$.
	For each $i \in \{1,2,3\}$, there exists an automorphism $(A, B) \in \Aut(H)$ such that $A_{i,i} = -1$ and $A_{j,j} = 1$ for all $j \in \{1,2,3\}\setminus \{i\}$.
\end{lemma}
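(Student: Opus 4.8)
The plan is to induct on~$t$, using the Kronecker construction of Sylvester matrices together with part~(i) of Lemma~\ref{lem: kronecker automorphisms of sylvester matrices}. Recall that a diagonal entry $A_{i,i}=-1$ encodes precisely the automorphism of the associated extended Hadamard graph that swaps the two vertices of the $i$-th red clique, so the lemma asks, for each of the first three cliques, for an automorphism of $H$ that flips exactly that clique among the first three. The structural fact I would exploit is that $\sylvester(2^t)=\sylvester(2)\otimes\sylvester(2^{t-1})$ places the first $2^{t-1}$ rows (hence, for $t\geq 3$, all of rows $1,2,3$) entirely inside the top Kronecker block, where a block-diagonal automorphism does not disturb them.

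For the induction step, assume $t\geq 3$ and that the statement holds for $\sylvester(2^{t-1})$. Fix $i\in\{1,2,3\}$ and take, by the induction hypothesis, an automorphism $(A',B')\in\Aut(\sylvester(2^{t-1}))$ with $A'_{i,i}=-1$ and $A'_{j,j}=1$ for $j\in\{1,2,3\}\setminus\{i\}$. By Lemma~\ref{lem: kronecker automorphisms of sylvester matrices}~(i) the pair $(\mathbb{1}_2\otimes A',\,\mathbb{1}_2\otimes B')$ lies in $\Aut(\sylvester(2)\otimes\sylvester(2^{t-1}))=\Aut(\sylvester(2^t))$. Writing $\mathbb{1}_2\otimes A'=\left(\begin{smallmatrix}A'&0\\0&A'\end{smallmatrix}\right)$ and using $2^{t-1}\geq 4>3$, the entries in positions $(1,1),(2,2),(3,3)$ of $\mathbb{1}_2\otimes A'$ are exactly $A'_{1,1},A'_{2,2},A'_{3,3}$. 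Hence $A\coloneqq\mathbb{1}_2\otimes A'$ realises the required sign pattern, which finishes the step.

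This reduces everything to the base case $t=2$, i.e.\ to $\sylvester(4)$, which I expect to be the only place requiring actual computation and therefore the main obstacle. The subtlety is that one cannot simply negate a single row: since row~$1$ of $\sylvester(4)$ is all ones, negating only row~$1$ shifts every column sum by $2$, turning the column-sum multiset $\{4,0,0,0\}$ into $\{2,2,2,2\}$, so the result is not column-equivalent to $\sylvester(4)$ and $\mathrm{diag}(-1,1,1,1)$ is part of no automorphism. The fix is to use translation automorphisms: indexing rows and columns of $\sylvester(4)$ by $x,y\in\mathbb{F}_2^2$ via $H_{x,y}=(-1)^{\langle x,y\rangle}$ (so that, in the Kronecker ordering, rows $1,2,3,4$ correspond to $00,01,10,11$), the identity $(-1)^{\langle x,y\oplus a\rangle}=(-1)^{\langle x,a\rangle}(-1)^{\langle x,y\rangle}$ shows $(D_a,P_a)\in\Aut(\sylvester(4))$, where $D_a=\mathrm{diag}\bigl((-1)^{\langle x,a\rangle}\bigr)_x$ and $P_a$ is the column permutation $y\mapsto y\oplus a$. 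Taking $(D_{01},P_{01})$ and $(D_{10},P_{10})$ gives the diagonals $(1,-1,1,-1)$ and $(1,1,-1,-1)$, settling $i=2$ and $i=3$; composing $(D_{11},P_{11})$ with the global automorphism $(-\mathbb{1}_4,-\mathbb{1}_4)\in\Aut(\sylvester(4))$ gives the diagonal $(-1,1,1,-1)$, settling $i=1$. The same translation-plus-global-negation recipe works verbatim for every $t\geq 2$, so one could bypass the induction entirely; I would keep the inductive presentation only because it reuses Lemma~\ref{lem: kronecker automorphisms of sylvester matrices} directly.
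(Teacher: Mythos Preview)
Your argument is correct and follows the same inductive skeleton as the paper: the induction step is identical (apply part~(i) of Lemma~\ref{lem: kronecker automorphisms of sylvester matrices} to lift $(A',B')$ to $(\mathbb{1}_2\otimes A',\mathbb{1}_2\otimes B')$, noting that rows $1,2,3$ sit in the top block). The only difference is the base case $t=2$: the paper builds the three automorphisms of $\sylvester(4)$ by tensoring explicit automorphisms of $\sylvester(2)$ via \emph{both} parts~(i) and~(ii) of Lemma~\ref{lem: kronecker automorphisms of sylvester matrices}, whereas you use the character-table description $H_{x,y}=(-1)^{\langle x,y\rangle}$ and the translation automorphisms $(D_a,P_a)$ (plus one global sign flip). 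These two routes produce literally the same diagonal matrices $\mathrm{diag}(-1,1,1,-1)$, $\mathrm{diag}(1,-1,1,-1)$, $\mathrm{diag}(1,1,-1,-1)$, so the difference is purely presentational. Your closing remark that the $\mathbb{F}_2^t$ translation trick works uniformly for all $t\ge 2$ and renders the induction unnecessary is a genuine simplification over the paper's proof.
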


\begin{proof}
	It is easy to see that
	\[ \left(\begin{pmatrix}
		-1 & 0\\
		0 & 1
	\end{pmatrix},
	\begin{pmatrix}
		0 & -1\\
		-1 & 0
	\end{pmatrix}\right)
	,~
	\left(\begin{pmatrix}
		1 & 0\\
		0 & -1
	\end{pmatrix},
	\begin{pmatrix}
		0 & 1\\
		1 & 0
	\end{pmatrix}\right),~\text{and}~
	\left(\mathbb{1}_2,
	\mathbb{1}_2\right).
	\]
	are automorphisms of $\sylvester(2)$.
	We prove the lemma by induction  on $t$.
	If $t=2$, then  by Lemma~\ref{lem: kronecker automorphisms of sylvester matrices}
	\begin{align*}
		&\left(\begin{pmatrix}
			1 & 0\\
			0 & -1	
		\end{pmatrix}
		\otimes
		\begin{pmatrix}
			-1 & 0\\
			0 & 1
		\end{pmatrix},
		\begin{pmatrix}
			0 & 1\\
			1 & 0	
		\end{pmatrix}
		\otimes
		\begin{pmatrix}
			0 & -1\\
			-1 & 0
		\end{pmatrix}\right),\\
		&\left(\mathbb{1}_2
		\otimes
		\begin{pmatrix}
			1 & 0\\
			0 & -1
		\end{pmatrix},
		\mathbb{1}_2
		\otimes
		\begin{pmatrix}
			0 & 1\\
			1 & 0
		\end{pmatrix}\right),~\text{and} \\
		&\left(\begin{pmatrix}
			1 & 0\\
			0 & -1	
		\end{pmatrix}
		\otimes
		\mathbb{1}_2,
		\begin{pmatrix}
			0 & 1\\
			1 & 0	
		\end{pmatrix}
		\otimes
		\mathbb{1}_2\right)
	\end{align*}
	are automorphisms of $\sylvester(4)$ with the respective desired properties.
	Assume that the statement is satisfied for some $t\geq 2$ and for $i \in \{1,2,3\}$ suppose $(A^{(i)}, B^{(i)}) \in \Aut(\sylvester(2^t))$ such that $A^{(i)}_{i,i} = -1$ and $A^{(i)}_{j,j}=1$ for each $j \in \{1,2,3\}\setminus \{i\}$.
	By Lemma~\ref{lem: kronecker automorphisms of sylvester matrices}, we have $\left(\mathbb{1}_2 \otimes A^{(i)}, \mathbb{1}_2 \otimes B^{(i)} \right) \in \Aut(\sylvester(2^{t+1}))$ and
	$(\mathbb{1}_2 \otimes A^{(i)})_{i,i} = -1$ whereas
	$(\mathbb{1}_2 \otimes A^{(i)})_{j,j} = 1$.
\end{proof}

Next, we search automorphisms for a Sylvester matrix that act on some of the columns in a particular manner while fixing the first and the second row.
Additionally we want the coefficients with which the first two rows are multiplied to be 1. 

Given a column $c$ of a Hadamard matrix $H \in \{-1,1\}^{s \times s}$, let~$\omega_c(H)$ be the set of columns whose first two entries agree with the first two entries of~$c$ or which are their inverses.
More formally, we define 
\[	\omega_c(H)= \{c' \in \{1, \dots, s\} \mid (H_{1,c'}, H_{2,c'}) \in \{(H_{1,c}, H_{2,c}), (-H_{1,c}, -H_{2,c}) \}\}.
\]

\begin{lemma}
	\label{lem:aut-of-sylvester:2rows1column}
	Let $t \in \mathbb{N}_{\geq 2}$ and let $c$ be a column of $\sylvester(2^t)$.
	For each $j \in \omega_c(\sylvester(2^t))$, there exists an automorphism $(A,B)\in \Aut(\sylvester(2^t))$
	such that
	$A_{1,1}=A_{2,2} = 1$ and $B_{c,j} \in \{-1,1\}$.
\end{lemma}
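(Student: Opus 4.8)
The plan is to induct on $t$, peeling off the outermost Kronecker factor and using Lemma~\ref{lem: kronecker automorphisms of sylvester matrices}. Write $H=\sylvester(2^t)=\sylvester(2)\otimes H'$ with $H'\coloneqq\sylvester(2^{t-1})$, so that the columns of $H$ split into a left half and a right half according to the outer factor; index a column by a pair $(a,b)$ with $a\in\{1,2\}$ selecting the half and $b$ a column of $H'$ within it. The key preliminary observation is that $\omega_c(H)$ is controlled solely by the inner index. Indeed, the first row of a Sylvester matrix is all ones, so the negation alternative in the definition of $\omega_c$ never occurs and $\omega_c(H)=\{j : H_{2,j}=H_{2,c}\}$; moreover the top-left and top-right blocks of $H$ are both $H'$, so the second-row entry of column $(a,b)$ equals that of column $b$ in $H'$, independently of $a$. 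Hence, writing $c=(a_c,b_c)$,
\[\omega_c(H)=\{(a,b) : a\in\{1,2\},\ b\in\omega_{b_c}(H')\}.\]

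For the base of the induction, note that for $\sylvester(2)$ each set $\omega_c$ is a singleton, so the identity automorphism suffices. For the inductive step ($t\geq 2$), take $j=(a_j,b_j)\in\omega_c(H)$, so that $b_j\in\omega_{b_c}(H')$, and let $(A',B')\in\Aut(H')$ be the automorphism supplied by the induction hypothesis, fixing the first two rows ($A'_{1,1}=A'_{2,2}=1$) and satisfying $B'_{b_c,b_j}\neq 0$ (i.e.\ $B'$ sends the inner index $b_j$ to $b_c$). If $a_j=a_c$, I lift $(A',B')$ by part~\eqref{itm: blow up aut dont change anything} of Lemma~\ref{lem: kronecker automorphisms of sylvester matrices} to $(\mathbb{1}_2\otimes A',\ \mathbb{1}_2\otimes B')\in\Aut(H)$; its row part acts as $A'$ on the top rows and so still fixes rows $1$ and $2$, while its column part keeps the half fixed and applies $B'$ inside it, sending $j=(a_c,b_j)$ to $(a_c,b_c)=c$. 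If $a_j\neq a_c$, I first apply the half-swap automorphism obtained from part~\eqref{itm: blow up aut change the last columns with first colums} with $(A',B')$ taken to be the identity, namely $\bigl(\left(\begin{smallmatrix}1&0\\0&-1\end{smallmatrix}\right)\otimes\mathbb{1}_{2^{t-1}},\ \left(\begin{smallmatrix}0&1\\1&0\end{smallmatrix}\right)\otimes\mathbb{1}_{2^{t-1}}\bigr)$; its row part negates only the bottom half and therefore fixes rows $1$ and $2$, while its column part exchanges the two halves, sending $(a_j,b_j)$ to $(a_c,b_j)$. Composing with the lifted automorphism of the previous case then sends $j$ to $c$.

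The composite $(A,B)$ is an automorphism of $H$ because the group operation on this action is componentwise, $(A_1,B_1)(A_2,B_2)=(A_1A_2,B_1B_2)$; it fixes rows $1$ and $2$ because the matrices fixing the first two rows form a subgroup and both factors do; and its column permutation sends $j$ to $c$, that is $B_{c,j}\neq 0$, which is exactly the required condition. The main point to get right is this compatibility of the recursion with $\omega_c$: although $\omega_c$ is read off from the second row, rows $1$ and $2$ both lie in the top half, so neither the within-half lift nor the half-swap ever disturbs them (the half-swap only rescales the bottom rows), and the second-row pattern is identical in the two halves, leaving the outer bit of a column entirely free while the inner index is controlled by induction. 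The remaining checks — that the two displayed pairs are genuinely automorphisms — are immediate from Lemma~\ref{lem: kronecker automorphisms of sylvester matrices}.
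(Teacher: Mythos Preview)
Your proof is correct and follows essentially the same inductive approach as the paper, lifting automorphisms through the Kronecker factorisation via Lemma~\ref{lem: kronecker automorphisms of sylvester matrices}. The only cosmetic differences are that you anchor the induction at $\sylvester(2)$ rather than $\sylvester(4)$, and in the cross-half case you compose the half-swap with a separate within-half lift instead of applying part~\eqref{itm: blow up aut change the last columns with first colums} directly to the inductively obtained $(A',B')$.
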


\begin{proof}
	We prove the statement by induction on $t$.
	If $t=2$, then $|\omega_c(\sylvester(2^t))|=1$ and the identity automorphism satisfies the claim.
	Assume from now on that the claim holds true for some $t \geq 2$.
	Let $j$ be a column of $\sylvester(2^{t+1})$ with $j \in \omega_c(\sylvester(2^{t+1}))$.
	Observe that $(j \bmod 2^t+1) \in \omega_{(c \bmod 2^t+1)}(\sylvester(2^t))$ by the construction of the Sylvester matrix.
	By assumption, there exists $(A, B) \in \Aut(\sylvester(2^t))$ with $A_{1,1}= A_{2,2}=1$ and $\{B_{c \bmod 2^t + 1,j \bmod 2^t + 1}\}\in \{-1,1\}$.
	If $\max\{c, j\} \leq 2^t$ or $\min \{c \bmod 2^t+1, j \bmod 2^t+1\} \geq 2^t+1$, then
	$\left(\mathbb{1}_2
	\otimes
	A,
	\mathbb{1}_2
	\otimes
	B\right)$ is an automorphism of $H$ which satisfies the claim.
	Otherwise, 
	$\left(\begin{pmatrix}
		1 & 0\\
		0 & -1	
	\end{pmatrix}
	\otimes
	A,
	\begin{pmatrix}
		0 & 1\\
		1 & 0	
	\end{pmatrix}
	\otimes
	B\right)$ is an automorphism of $H$ with the desired property.
\end{proof}

We now execute our proof strategy again for the Hadamard matrix $\hadTwelve$ of rank~12.
Up to equivalence this matrix is given by
\begin{equation*}
	\hadTwelve = 
	\begin{pmatrix}[r]
		1 &  1 &  1 &  1 &  1 &  1 &  1 &  1 &  1 &  1 &  1 &  1\\
		1 &  1 &  1 & -1 & -1 & -1 & -1 & -1 & -1 &  1 &  1 &  1\\
		1 &  1 &  1 & -1 & -1 & -1 &  1 &  1 &  1 & -1 & -1 & -1\\
		1 & -1 & -1 &  1 & -1 & -1 & -1 &  1 &  1 & -1 &  1 &  1\\
		1 & -1 & -1 & -1 &  1 & -1 &  1 & -1 &  1 &  1 & -1 &  1\\
		1 & -1 & -1 & -1 & -1 &  1 &  1 &  1 & -1 &  1 &  1 & -1\\
		1 & -1 &  1 & -1 &  1 &  1 & -1 &  1 & -1 & -1 & -1 &  1\\
		1 & -1 &  1 &  1 & -1 &  1 & -1 & -1 &  1 &  1 & -1 & -1\\
		1 & -1 &  1 &  1 &  1 & -1 &  1 & -1 & -1 & -1 &  1 & -1\\
		1 &  1 & -1 & -1 &  1 &  1 & -1 & -1 &  1 & -1 &  1 & -1\\
		1 &  1 & -1 &  1 & -1 &  1 &  1 & -1 & -1 & -1 & -1 &  1\\
		1 &  1 & -1 &  1 &  1 & -1 & -1 &  1 & -1 &  1 & -1 & -1
	\end{pmatrix}.
\end{equation*}

\begin{lemma}
	\label{lem:automorphism-had12} \hfill
	\begin{enumerate}[(i)]
		\item \label{lem:automorphism-had12:3rows}
		For each $i \in \{1,2,3\}$, there exists an automorphism $(A, B) \in \Aut(\hadTwelve)$ such that $A_{i,i} = -1$ and $A_{j,j} = 1$ for all $j \in \{1,2,3\}\setminus \{i\}$.
		\item \label{lem:automorphism-had12:2rows1column}
		Let $c$ be a column.
		For each $j \in \omega_c(\hadTwelve)$, there exists an automorphism $(A, B) \in \Aut(\hadTwelve)$ with $A_{1,1} = A_{2,2}=1$ and $B_{c,j}\in \{1,-1\}$.
	\end{enumerate}
\end{lemma}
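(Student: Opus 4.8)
The plan is to prove both parts by exhibiting explicit automorphisms $(A,B)$ of $\hadTwelve$ and checking the defining identity $A\,\hadTwelve\,B^{-1}=\hadTwelve$; the abstract structure is used only to locate the right candidates and to organise the sign bookkeeping. In contrast to the Sylvester case (Lemma~\ref{lem: aut of sylverster} and Lemma~\ref{lem:aut-of-sylvester:2rows1column}), $\hadTwelve$ has no Kronecker factorisation, so Lemma~\ref{lem: kronecker automorphisms of sylvester matrices} is unavailable and one must work directly with the given $12\times 12$ matrix. The organising device is a \emph{signature} analysis: for a set $I$ of row indices and a column $k$ write $\mathrm{sig}_I(k)=(\hadTwelve_{l,k})_{l\in I}$. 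If $(A,B)\in\Aut(\hadTwelve)$ has permutation part fixing $I$ pointwise with diagonal signs $(\epsilon_l)_{l\in I}$, then restricting $A\,\hadTwelve\,B^{-1}=\hadTwelve$ to the rows of $I$ forces the signed column permutation encoded by $B$ to send the $\pm$-signature class of each column $k$ to the class obtained by the coordinatewise action of $(\epsilon_l)_{l\in I}$. This determines which columns $B$ may permute, and in part (ii) it identifies $\omega_c(\hadTwelve)$ as exactly the signature class of $c$ relative to $I=\{1,2\}$.

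First I would handle part (i). Inspecting rows $1,2,3$ of $\hadTwelve$ shows the twelve columns split into four size-three classes $\{1,2,3\},\{4,5,6\},\{7,8,9\},\{10,11,12\}$ with $(\mathrm{row}_2,\mathrm{row}_3)$-patterns $(+,+),(-,-),(-,+),(+,-)$. For $i=1$ the target signs $\epsilon=(-1,1,1)$ force $B$ to interchange $\{1,2,3\}\leftrightarrow\{4,5,6\}$ and $\{7,8,9\}\leftrightarrow\{10,11,12\}$; for $i=2$ it forces $\{1,2,3\}\leftrightarrow\{7,8,9\}$ and $\{4,5,6\}\leftrightarrow\{10,11,12\}$, and for $i=3$ it forces $\{1,2,3\}\leftrightarrow\{10,11,12\}$ and $\{4,5,6\}\leftrightarrow\{7,8,9\}$. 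Using that $\mathcal{A}(\hadTwelve)\cong M_{12}$ is $5$-transitive (which is why order $12$ appears in Theorem~\ref{thm: 3-transitive-hadamard-matrices}), one finds a row permutation fixing $1,2,3$ that realises the required column pairing; reading off the forced column signs and the action on rows $4,\dots,12$ then produces a concrete pair $(A,B)$, and one verifies $A_{i,i}=-1$, $A_{j,j}=1$ for $j\in\{1,2,3\}\setminus\{i\}$ together with $A\,\hadTwelve\,B^{-1}=\hadTwelve$ by direct computation.

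For part (ii) I would take $I=\{1,2\}$. From rows $1$ and $2$ the columns fall into two signature classes, $\{1,2,3,10,11,12\}$ with pattern $(+,+)$ and $\{4,5,6,7,8,9\}$ with pattern $(+,-)$, each of size six, and $\omega_c(\hadTwelve)$ is precisely the class containing $c$. The condition $A_{1,1}=A_{2,2}=1$ corresponds to $\epsilon=(1,1)$, so by the signature analysis the admissible automorphisms are exactly those whose column permutation preserves each class setwise with trivial sign on rows $1,2$. It then remains to show that this subgroup acts transitively on each six-element class: the $5$-transitivity of $M_{12}$ makes the two-point stabiliser highly transitive, and one checks that the induced column action within a class is realised by automorphisms with $A_{1,1}=A_{2,2}=1$. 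Concretely I would exhibit, for each $j$ in the class of $c$, an automorphism sending $c$ to $j$ up to sign with $A_{1,1}=A_{2,2}=1$, and verify the defining identity.

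The main obstacle is the sign bookkeeping. Transitivity of $\mathcal{A}(\hadTwelve)$ hands us row permutations with the correct \emph{position} behaviour for free, but says nothing about the diagonal matrix $D_A$, and it is exactly the prescribed signs on the first two or three positions that the lemma requires. Because no recursive structure is available to propagate these signs (as it was for Sylvester matrices), the sign conditions must be settled by explicit construction against the given matrix; the signature analysis guarantees such automorphisms can only permute columns within the prescribed classes, so the finite search is guided and the remaining verification is routine.
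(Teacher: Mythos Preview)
Your plan is sound and aligns with the paper's approach: both reduce to exhibiting explicit automorphisms of $\hadTwelve$ and verifying $A\,\hadTwelve\,B^{-1}=\hadTwelve$. Your signature analysis is a genuinely useful organising device that the paper does not make explicit; it correctly predicts, for instance, that in part~(i) with $i=1$ all column signs must be $-1$ and the column permutation must interchange $\{1,2,3\}\leftrightarrow\{4,5,6\}$ and $\{7,8,9\}\leftrightarrow\{10,11,12\}$, which is exactly what the paper's tabulated automorphism does.

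Where the paper differs is that it simply \emph{writes down} the automorphisms: for part~(i) it gives three explicit pairs $(\sigma^A,\mathcal{I}^A,\sigma^B,\mathcal{I}^B)$, one per $i$, and for part~(ii) it gives twelve explicit automorphisms, one sending column~$1$ to each column~$j$, all with $A_{1,1}=1$. It then uses a composition trick you do not mention: for general $c$ and $j\in\omega_c(\hadTwelve)$, the pair $\bigl((A^{(c)})^{-1}A^{(j)},(B^{(c)})^{-1}B^{(j)}\bigr)$ works, because your signature argument forces $A^{(j)}_{2,2}=+1$ precisely when $j\in\omega_1(\hadTwelve)$, so the signs on row~$2$ cancel when $c$ and $j$ lie in the same class. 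This reduces the verification from all pairs $(c,j)$ to just twelve base cases.

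The gap in your proposal is that it remains a plan: the appeal to $5$-transitivity of $M_{12}$ only controls the permutation part $P_A$, and as you yourself note, the diagonal signs are not determined by it. You never actually produce a single automorphism. Since the lemma is a finite existence statement about one fixed matrix with no recursive structure, the proof is not complete until the witnesses are written down (or a computer verification is cited). Your framework would let you \emph{find} them efficiently, but the paper's proof is the step you still owe.
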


\begin{proof}
	For notational brevity, we describe an automorphism $(A,B) \in \Aut(\hadTwelve)$ by a row permutation $\sigma^A$, a row inversion set $\mathcal{I}^A$, a column permutation $\sigma^B$, and a column inversion set $\mathcal{I}^B$ such that  for $\Theta \in \{A,B\}$ we have
	\begin{equation*}
		\Theta_{k,l} \coloneqq
		\begin{cases}
			-1, &\text{ if $\sigma^\Theta(l) = k$ and $l \in \mathcal{I}^\Theta$,}\\
			\phantom{-}1,  &\text{ if $\sigma^\Theta(l) = k$ and $l \notin \mathcal{I}^\Theta$,}\\
			\phantom{-}0,  &\text{ otherwise.}
		\end{cases}
	\end{equation*}
	
	For~\eqref{lem:automorphism-had12:3rows}, the table below specifies for each $i \in \{1,2,3\}$ an automorphism $(A^{(i)},B^{(i)})$ satisfying the required conditions.
	
	\begin{longtable}{lllll} 
		$i$ & $\sigma^{A^{(i)}}$ & $\mathcal{I}^{A^{(i)}}$ & $\sigma^{B^{(i)}}$ & $\mathcal{I}^{B^{(i)}}$ \\ \hline
		1 & $(4\:12\:5\:9)(7\:10\:11\:8)$ & $\{1, 6, 7, 8, 10, 11\}$ & $(1\:6)(2\:4\:3\:5)(7\:10\:8\:11)(9\:12)$ & $\{1, \dots, 12\}$\\
		2 & $(4\:10\:5\:8)(7\:9\:11\:12)$ & $\{2, 6, 7, 9, 11, 12\}$ & $(1\:9)(2\:7\:3\:8)(4\:11\:5\:10)(6\:12)$ & $\{ \}$\\
		3 & $(4\:11\:5\:7)(8\:12\:10\:9)$ & $\{3, 6, 8, 9, 10, 12\}$ & $(1\:12)(2\:10\:3\:11)(4\:7\:5\:8)(6\:9)$ & $\{ \}$
	\end{longtable}
	
	For~\eqref{lem:automorphism-had12:2rows1column}, the table below specifies for each $j \in \{1,\dots,12\}$ an automorphism~$(A^{(j)},B^{(j)})$ that maps the first column to the $j$-th column.
	Given $j \in \omega_c(\hadTwelve)$ for some $c \in \{1,\dots,12\}$, the automorphism $({A^{(c)}}^{-1} A^{(j)},{B^{(c)}}^{-1} B^{(j)})$ satisfies the required conditions:
	if $j,c \notin \omega_1(\hadTwelve)$, then we have ${A^{(c)}}^{-1}_{2,2} = A^{(j)}_{2,2} = -1$ while ${A^{(c)}}^{-1}_{1,1} = A^{(j)}_{1,1} = 1$ and ${B^{(c)}}^{-1}_{c,1} = B^{(j)}_{1,j} = 1$ if $j,c \in \omega_1(\hadTwelve)$.
	Thus, we have $({A^{(c)}}^{-1} A^{(j)})_{1,1} = ({A^{(c)}}^{-1} A^{(j)})_{2,2} = ({B^{(c)}}^{-1} B^{(j)})_{c,j} = 1$.
	A similar  argument applies if $j,c \in \omega_c(\hadTwelve)$.
	
	\begin{longtable}{lllll} 
		$j$ & $\sigma^{A^{(j)}}$ & $\mathcal{I}^{A^{(j)}}$ & $\sigma^{B^{(j)}}$ & $\mathcal{I}^{B^{(j)}}$ \\ \hline
		1 & $(3\:12\:11)(4\:9\:6)(5\:7\:8)$ & $\{ \}$ & $(3\:10\:12)(4\:7\:8)(5\:6\:9)$ & $\{ \}$\\
		2 & $(3\:12)(4\:8)(5\:6)(7\:9)$ & $\{4, 5, 6, 7, 8, 9\}$ & $(1\:2)(3\:10)(4\:7)(5\:9)$ & $\{ \}$\\
		3 & $(3\:9\:7\:8)(4\:10\:12\:5)$ & $\{4, 5, 6, 10, 11, 12\}$ & $(1\:3)(2\:11\:12\:10)(4\:8)(5\:6\:9\:7)$ & $\{ \}$\\
		4 & $(3\:10\:5\:6)(4\:8\:9\:11)$ & $\{2, 3, 5, 6, 7, 10\}$ & $(1\:4)(2\:8\:10\:5)(3\:7\:12\:9)(6\:11)$ & $\{ \}$\\
		5 & $(3\:11\:8\:6)(5\:9\:10\:7)$ & $\{2, 3, 4, 6, 8, 11\}$ & $(1\:5)(2\:8\:10\:4)(3\:9)(6\:12\:7\:11)$ & $\{ \}$\\
		6 & $(3\:12\:4\:5)(6\:7\:8\:10)$ & $\{2, 3, 4, 5, 9, 12\}$ & $(1\:6)(2\:7\:12\:4)(3\:9\:11\:8)(5\:10)$ & $\{ \}$\\
		7 & $(3\:11\:9\:6)(4\:8\:12\:7)$ & $\{2, 4, 7, 8, 10, 12\}$ & $(1\:7)(2\:4\:11\:8)(3\:6)(5\:10\:9\:12)$ & $\{ \}$\\
		8 & $(3\:12\:4\:6)(5\:9\:8\:11)$ & $\{2, 5, 8, 9, 10, 11\}$ & $(1\:8)(2\:4\:11\:7)(3\:5\:12\:6)(9\:10)$ & $\{ \}$\\
		9 & $(3\:10\:8\:5)(6\:7\:11\:9)$ & $\{2, 6, 7, 9, 11, 12\}$ & $(1\:9)(2\:6\:10\:7)(3\:5)(4\:12\:8\:11)$ & $\{ \}$\\
		10 & $(3\:11\:4\:9)(5\:12\:8\:6)$ & $\{3, 4, 7, 9, 10, 11\}$ & $(1\:10)(2\:3\:11\:12)(4\:6\:7\:5)(8\:9)$ & $\{ \}$\\
		11 & $(3\:12)(4\:10)(5\:7)(8\:11)$ & $\{3, 5, 7, 8, 11, 12\}$ & $(1\:11)(2\:12)(4\:5)(6\:8)$ & $\{ \}$\\
		12 & $(3\:12)(4\:11)(6\:9)(8\:10)$ & $\{3, 6, 8, 9, 10, 12\}$ & $(1\:12)(2\:11)(6\:8)(7\:9)$ & $\{ \}$
	\end{longtable}
\end{proof}

Having determined which extended Hadamard graphs are~$3$-ultrahomogeneous, we can precisely classify the degree of regularity of all extended Hadamard graphs.

\begin{theorem}
	\label{thm:high-regularity-of-hadamard-graphs}
	Let $H \in \{-1,1\}^{s \times s}$ be a Hadamard matrix and $(G, \colouring)$ be the corresponding extended Hadamard graph.
	\begin{enumerate}[(i)]
		\item \label{itm: hadamard-graphs-are-3tu} $(G, \colouring)$ is \tupleRegular{3}.
		\item \label{itm: 3ultra-hadamard-graphs} $(G, \colouring)$ is \ultraRegular{3} if and only if $H$ is equivalent to a Sylvester matrix or $s=12$.
		\item \label{itm: 4tuple-regular-hadamard-graphs} $(G, \colouring)$ is \tupleRegular{4} if and only if $(G, \colouring) \cong \extendedHadamardGraph(\sylvester(2))$. Moreover, $\extendedHadamardGraph(\sylvester(2))$ is UH.
	\end{enumerate}
\end{theorem}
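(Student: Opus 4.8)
The statement has three parts, which I would treat in order, reusing the structural facts already collected for extended Hadamard graphs: that $G[R]\cong sK_2$ and $G[B]\cong sK_2$, that every vertex has exactly one same-coloured (twin) neighbour, and the identities~\eqref{eq: hadGraph amply parameters} and~\eqref{eq: 3 indep vertices have same nb of nbs}. For~\eqref{itm: hadamard-graphs-are-3tu} the plan is to verify \tupleRegular{3} by a finite case analysis over the colour-preserving isomorphism types of induced subgraphs on at most three vertices, checking that $\lambda^R$ and $\lambda^B$ are determined by the type. Degree-regularity handles singletons. For a pair the relevant cases are: two twins (no common neighbour of either colour); two same-coloured non-twins, which lie in distinct cliques, hence are at distance $2$ in $G(H)$ and so have $\nicefrac{s}{2}$ common opposite-coloured and no common same-coloured neighbours by~\eqref{eq: hadGraph amply parameters}; and an adjacent resp.\ non-adjacent bichromatic pair, giving $0$ resp.\ $1$ common neighbour of each colour via the twin edges. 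For triples the only genuinely new quantity is the number of common opposite-coloured neighbours of three pairwise non-adjacent same-coloured vertices, which is $\nicefrac{s}{4}$ by~\eqref{eq: 3 indep vertices have same nb of nbs}; every remaining triple type collapses to the pair computations, since a twin pair already has no common neighbours. This part is bookkeeping rather than a real obstacle.

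For~\eqref{itm: 3ultra-hadamard-graphs} I would use that a colour-preserving automorphism of $(G,\colouring)$ permutes the maximal red cliques (the rows) and the maximal blue cliques (the columns) and corresponds to an element of $\Aut(H)$, the induced action on rows being exactly $\mathcal{A}(H)$. The key reduction is that $(G,\colouring)$ is \ultraRegular{3} if and only if $\Aut(G,\colouring)$ acts transitively on the ordered realisations of each isomorphism type of coloured subgraph on at most three vertices. For the ``only if'' direction, transitivity on ordered triples of pairwise non-adjacent red vertices implies $3$-transitivity of $\mathcal{A}(H)$ on rows, so Theorem~\ref{thm: 3-transitive-hadamard-matrices} forces $H$ to be equivalent to a Sylvester matrix or $s=12$. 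For the ``if'' direction I would run the strategy outlined before the theorem: $3$-transitivity of $\mathcal{A}(H)$ on rows and columns gives a $3$-transitive action on red and on blue cliques, and Lemmas~\ref{lem: aut of sylverster} and~\ref{lem:automorphism-had12}\eqref{lem:automorphism-had12:3rows} supply, for the first three cliques, automorphisms swapping the two vertices of one clique while fixing the other two cliques pointwise; together these yield transitivity on ordered triples of independent red vertices. Lemmas~\ref{lem:aut-of-sylvester:2rows1column} and~\ref{lem:automorphism-had12}\eqref{lem:automorphism-had12:2rows1column} then cover the mixed types by moving a blue vertex to any blue vertex with the same neighbourhood in the first two red cliques. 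The obstacle here is the careful enumeration ensuring every $\le 3$-vertex type is reached, after normalising to the first few cliques via the transitive clique action.

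The heart of the theorem is the forward direction of~\eqref{itm: 4tuple-regular-hadamard-graphs}. Suppose $(G,\colouring)=\extendedHadamardGraph(H)$ is \tupleRegular{4}; by~\eqref{eq: equi-matrices-equi-graphs} I may assume $H$ is normalised with an all-ones first row. If $s\ge 4$, four pairwise non-adjacent red vertices $r_a^{\epsilon_a},r_b^{\epsilon_b},r_c^{\epsilon_c},r_d^{\epsilon_d}$ exist, and their number of common blue neighbours is the number of columns $j$ with $\epsilon_aH_{aj}=\epsilon_bH_{bj}=\epsilon_cH_{cj}=\epsilon_dH_{dj}$. A short character sum over $\{\pm 1\}^3$ in the products $H_{aj}H_{bj},H_{bj}H_{cj},H_{cj}H_{dj}$, using only pairwise orthogonality of the distinct rows, evaluates this count as $\tfrac18\bigl(s+\epsilon_a\epsilon_b\epsilon_c\epsilon_d\,\tau_{abcd}\bigr)$, where $\tau_{abcd}=\sum_jH_{aj}H_{bj}H_{cj}H_{dj}$. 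Since all four vertices induce the same type regardless of the signs, \tupleRegular{4} forces the count to be independent of $\epsilon_a\epsilon_b\epsilon_c\epsilon_d$, whence $\tau_{abcd}=0$ for every four distinct rows. Normalisation now gives $\tau_{1bce}=\langle H_b\circ H_c, H_e\rangle$ for the entrywise product $H_b\circ H_c$, so $H_b\circ H_c$ is orthogonal to $H_1$ and to every $H_e$ with $e\notin\{b,c\}$; as the rows form an orthogonal basis this forces $H_b\circ H_c\in\operatorname{span}\{H_b,H_c\}$, and pairing with $H_b$ and $H_c$ (using $\langle H_b\circ H_c,H_b\rangle=\langle H_c,H_1\rangle=0$) gives $H_b\circ H_c=0$, a contradiction. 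Hence $s\le 3$, so $s=2$ and $H\cong\sylvester(2)$. I expect this span argument to be the crux: turning \tupleRegular{4} into the vanishing of all four-row correlations, and then exploiting the all-ones row to collapse a product of two rows into their span.

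It remains to prove the ``moreover'', namely that $\extendedHadamardGraph(\sylvester(2))$ is UH, which also gives the backward direction of~\eqref{itm: 4tuple-regular-hadamard-graphs}. This graph has only eight vertices (the $2$-coloured Wagner graph), so UH is equivalent to \ultraRegular{8}, a finite check. I would carry it out by determining the colour-preserving automorphism group explicitly and verifying that it acts transitively on the ordered realisations of every isomorphism type of induced subgraph; since the graph is already \ultraRegular{3} by~\eqref{itm: 3ultra-hadamard-graphs}, only the types on four to eight vertices remain, and the high symmetry of this small graph makes the verification routine.
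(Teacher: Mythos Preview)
Your treatment of parts~\eqref{itm: hadamard-graphs-are-3tu} and~\eqref{itm: 3ultra-hadamard-graphs} follows the paper's proof essentially verbatim: the same case analysis on independent sets of size at most three for \tupleRegular{3}, and the same use of Theorem~\ref{thm: 3-transitive-hadamard-matrices} together with Lemmas~\ref{lem: aut of sylverster}, \ref{lem:aut-of-sylvester:2rows1column}, and~\ref{lem:automorphism-had12} for \ultraRegular{3}.

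For the forward direction of~\eqref{itm: 4tuple-regular-hadamard-graphs} you take a genuinely different route. The paper fixes an adjacent pair $r\in R$, $b\in B$, builds the incidence structure on $N^R(b)\setminus\{r\}$ with blocks coming from $N^B(r)\setminus\{b\}$, and observes that \tupleRegular{4} makes this a symmetric $3$-design; Hughes' theorem~\eqref{eq: degenerate symm 3-designs} then forces degeneracy, and comparing the block size with the known value $s/2$ from~\eqref{eq: hadGraph amply parameters} pins down $s$. Your argument instead extracts from \tupleRegular{4} the vanishing of every four-row correlation $\tau_{abcd}=\sum_jH_{aj}H_{bj}H_{cj}H_{dj}$, and then uses the orthogonal-basis property of the rows (after normalising the first row to all ones) to show that the entrywise product of any two non-first rows would have to be the zero vector. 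Both arguments are short and correct; yours is more self-contained in that it avoids the design-theoretic black box, while the paper's makes the structural reason---a hidden symmetric $3$-design being forced to be trivial---more visible and reuses a tool already invoked elsewhere. One minor quibble, shared with the paper's argument: your conclusion ``$s\le 3$, so $s=2$'' silently discards $s=1$.

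For the ``moreover'' your plan of a brute-force check on eight vertices certainly works, but the paper's argument is slicker and worth knowing: in $\extendedHadamardGraph(\sylvester(2))$, once two non-adjacent vertices $v_1,v_2$ are fixed, every remaining vertex is uniquely determined by its colour together with its adjacencies to $v_1$ and $v_2$. Hence any isomorphism between induced subgraphs of order at least~$4$ (which necessarily contain a non-adjacent pair) is already determined by its restriction to such a pair, and the automorphism produced by~\ultraRegular{3} from that restriction must agree with the given isomorphism everywhere. This reduces UH to the already-established~\ultraRegular{3} with no further enumeration.
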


\begin{proof}
	We first prove~\eqref{itm: hadamard-graphs-are-3tu}.
	Suppose $U \subseteq V(G)$.
	If $U$ is not an independent set, then $\lambda^R(U) = \lambda^B(U) = 0$.
	Therefore, let $U$ be independent with at most three vertices.
	We set $r_U \coloneqq |U \cap R|$ and $b_U \coloneqq |U \cap B|$.
	If $r_U \geq 2$, then $\lambda^R(U) = 0$.
	If $r_U = 1$, then the vertex $u_r \in U\cap R$ has exactly one red neighbour $v_r$ in $(G, \colouring)$.
	Since $U$ is independent and every blue vertex has a neighbour in $\{u_r, v_r\}$, we obtain $\lambda^R(U) = 1$.
	For the remaining cases we claim that
	\[\lambda^R(U) = \frac{s}{2^{b_U-1}}~\text{if $r_U=0$ and $b_U \in \{1,2,3\}$}.\]
	If $b_U \in \{1,2\}$, then the equation above is a direct consequence of Equation~\eqref{eq: hadGraph amply parameters}.
	If $b_U = 3$, then this follows from Equation~\eqref{eq: 3 indep vertices have same nb of nbs}.
	(The $\lambda^B$-values can be obtained by interchanging the roles of the two colours.)

	\medskip
	\noindent
	We prove~\eqref{itm: 3ultra-hadamard-graphs}.
	If $(G, \colouring)$ is \ultraRegular{3},
	then $\mathcal{A}(H)$ is 3-transitive.
	By Theorem~\ref{thm: 3-transitive-hadamard-matrices} and~ Statement~\eqref{eq: equi-matrices-equi-graphs}, we may assume that $H$ is a Sylvester matrix or $\hadTwelve$.
	Suppose $\{v_1, v_2, v_3\}, \{v_1', v_2', v_3'\} \subseteq V(G)$ and there is an isomorphism between $(G,\colouring)[\{v_1, v_2, v_3\}]$ and $(G, \colouring)[\{v_1', v_2', v_3'\}]$ mapping~$v_i$ to~$v'_i$.
	Due to unique neighbourhood within the $2$-cliques, we may assume that  $vv'\notin E(G)$ for all vertices $v,v' \in \{v_1', v_2', v_3'\}$ with $\colouring(v) = \colouring(v')$.
	
	First assume that $\{v_1, v_2, v_3\}, \{v_1', v_2', v_3'\} \subseteq R$ and thus
	\[
	(G,\colouring)[\{v_1, v_2, v_3\}] \cong (G, \colouring)[\{v_1', v_2', v_3'\}] \cong \overline{K_3}.
	\]
	By the 3-transitivity of $\mathcal{A}(H)$
	there exists automorphisms $\varphi, \varphi' \in \Aut((G, \colouring))$ with $\varphi(v_i), \varphi'(v_i') \in \{r_i^+, r_i^-\}$ for all $i \in \{1,2,3\}$.
	Further, Lemma~\ref{lem: aut of sylverster} (respectively Lemma~\ref{lem:automorphism-had12}\eqref{lem:automorphism-had12:3rows}) provides an automorphism~$\psi_j$ for each $j \in \{1,2,3\}$ such that $\psi_j(r_j^+) = r_j^-$ and $\psi_j(r_{j'}^+) = r_{j'}^+$ for all $j' \in \{1,2,3\} \setminus \{j\}$.
	Therefore, for every set $J \subseteq \{1,2,3\}$ there exists an automorphism~$\psi_J$ with $\psi_J(r_j^+) = r_j^-$ and $\psi_J(r_{j'}^+) = r_{j'}^+$ for all $j \in J$.
	This implies that there is a~$J\subseteq \{1,2,3\}$ 
	such that $\varphi \circ \psi_J \circ \varphi'^{-1}$ extends the colour-preserving isomorphism between $(G,\colouring)[\{v_1, v_2, v_3\}]$ and $(G, \colouring)[\{v_1', v_2', v_3'\}]$.
	
	Now assume $\abs{\{v_1, v_2, v_3\}\cap R} = 2$.
	As in the previous case, there exists $\varphi, \varphi' \in \Aut((G, \colouring))$ and $J \subseteq \{1,2\}$ such that  $(\varphi \circ \psi_J)(v_i),=\varphi'(v_i') \in \{r_i^+, r_i^-\}$ 
	for all $i \in \{1,2\}$.
	Note that we might have $(\varphi \circ \psi_J)(v_3) \neq v_3$ and $\varphi'(v'_3) \neq v'_3$.
	So from Lemma~\ref{lem:aut-of-sylvester:2rows1column} (respectively Lemma~\ref{lem:automorphism-had12}\eqref{lem:automorphism-had12:2rows1column}) we obtain an automorphism~$\rho$ mapping $(\varphi \circ \psi_J)(v_3)$ to $\varphi'(v'_3)$ while $(\varphi \circ \psi_J \circ \rho)(v_i) = \varphi'(v'_i)$ for each $i \in \{1,2\}$.
	Hence the automorphism $\varphi \circ \psi_J \circ \rho \circ \varphi'^{-1}$ is an extension of the colour-preserving isomorphism between $(G,\colouring)[\{v_1, v_2, v_3\}]$ and $(G, \colouring)[\{v_1', v_2', v_3'\}]$.
	
	The remaining cases follow by interchanging the roles of the colours.
	Furthermore, by what we just argued, isomorphisms between induced subgraphs with fewer than 3 vertices also extend to automorphisms of $(G,\colouring)$.
	
	\medskip
	\noindent
	It remains to prove~\eqref{itm: 4tuple-regular-hadamard-graphs}.
	If $(G, \colouring)$ is \tupleRegular{4},
	then there is a parameter $\lambda^R$ ($\lambda^B$) such that any four independent blue (red) vertices have $\lambda^R$ ($\lambda^B$) common red (blue) neighbours in $(G, \colouring)$.
	Let $r \in R$ and $b\in B$ be adjacent in $G$.
	Consider the incidence relation $\mathcal{I}$ with point set $p \coloneqq N^R(b)\setminus \{r\}$, block set
	$\{\!\{ \{r' \in p \mid r'b' \in E(G) \} \mid b' \in N^B(r)\setminus \{b\}\}\!\}$.
	Each three points of $\mathcal{I}$ form together with~$r$ an independent 4-set of $G$ and, hence, there exists $\lambda^B-1$ blocks containing all three points.
	Analogously, each three blocks intersect in $\lambda^R-1$ points.
	In particular, $\mathcal{I}$ is a symmetric 3-design.
	We obtain with~\eqref{eq: degenerate symm 3-designs} that
	$|N_G(b) \cap N_G(b')| \in \{0,1,s-1,s\}$ for each block $b'$ of $\mathcal{I}$.
	Since $G$ is an extended Hadamard graph, we have $|N_G(b) \cap N_G(b')| = \nicefrac{s}{2}$.
	Altogether, we have $s \in \{0,2\}$.
	We obtain $s=2$ since there is no Hadamard matrix of rank~0.
	If $H$ is a $2\times 2$-Hadamard matrix, of which there is only one up to equivalence, then $(G, \chi)\cong \extendedHadamardGraph(\sylvester(2))$.
	
	For the ``moreover'' part, assume that $(G, \chi)\cong \extendedHadamardGraph(\sylvester(2))$ (see Figure~\ref{fig: 8cycleWithDiagonals}).
	Fix two non-adjacent vertices $v_1$ and $v_2$ in $V(G)$.
	For each $u \in V(G)\setminus \{v_1, v_2\}$ we call the triple $(\chi(u), \delta_{u \in N_G(v_1)}, \delta_{u \in N_G(v_2)})$ the \emph{type} of $u$, where $\delta_A$ denotes the indicator function of a statement $A$ (that is~$1$ if~$A$ is true and~$0$ otherwise).\\
	\noindent \textbf{Observation:} No two distinct vertices in $V(G)\setminus \{v_1, v_2\}$ are of the same type.
	
	Let now $S, S' \subseteq V(G)$ be vertex sets for which there exists an isomorphism $\varphi$ between the two corresponding induced subgraphs of $(G, \chi)$.
	If $|S| \leq 3$, then it follows with Part~\eqref{itm: 3ultra-hadamard-graphs} that $\varphi$ extends to an automorphism of $(G, \chi)$.
	Therefore, suppose $|S| \geq 4$.
	In particular, $S$ contains two non-adjacent vertices $w_1$ and $w_2$.
	It follows with Part~\eqref{itm: 3ultra-hadamard-graphs} that the restriction of $\varphi$ to $\{w_1, w_2\}$ extends to an automorphism of $(G, \chi)$. It follows by the observation that this automorphism is an extension of $\varphi$.
\end{proof}

\section{Tricoloured graphs}
\label{sec:finite-graphs-multiple-colours}

\begin{theorem}\label{thm:hadamard:is:discon}
	Let $(G, \colouring)$ be a \tupleRegular{3} 3-coloured graph with colour classes $R, B$, and $Y$ such that each of the graphs $G[R]$, $G[B]$, and $G[Y]$ is a disjoint union of complete graphs.
	If $(G, \colouring)[R \cup B] = \extendedHadamardGraph(H)$ for some Hadamard matrix $H \in \{-1, 1\}^{s\times s}$, then $R$ and~$Y$ are homogeneously connected and~$B$ and $Y$ are homogeneously connected.
\end{theorem}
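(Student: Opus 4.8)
The plan is to show that every yellow vertex $y$ satisfies $N^R(y)\in\{\emptyset,R\}$. By $1$-tuple regularity the quantity $\lambda^R(\{y\})=|N^R(y)|$ is the same for all $y\in Y$, so once each $N^R(y)$ is known to be $\emptyset$ or $R$ they are simultaneously all $\emptyset$ or all $R$, which is exactly homogeneous connectedness of $R$ and $Y$; the statement for $B$ and $Y$ then follows by interchanging the two Hadamard colour classes. So I fix $y\in Y$ and assume, towards a contradiction, that $\emptyset\subsetneq N^R(y)\subsetneq R$. The first step is to pin down the shape of $N^R(y)$: since $(G,\colouring)$ is \tupleRegular{3}, Lemma~\ref{lem: safeOperations}\eqref{itm: properties of colour subconstituents} shows that $\firstRedNbGraph{G}{y}$ and $\secondRedNbGraph{G}{y}$ are strongly regular, hence (as induced subgraphs of $sK_2$) disjoint unions of cliques of a single size. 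Because $G[R]\cong sK_2$, this forces $N^R(y)$ to be either a union of complete red $2$-cliques or a transversal meeting every red $2$-clique in exactly one vertex. Equivalently, applying Theorem~\ref{thm: imprimitive graphs two colours main} to the \tupleRegular{3} bichromatic graph $(G,\colouring)[R\cup Y]$ and noting that the matching case cannot occur since $sK_2$ is neither edgeless nor complete (for $s\geq2$; the small case $s=1$ is checked directly), leaves precisely the ``blow-up'' shape and the ``Hadamard'' shape. The same dichotomy applies to $N^B(y)$.

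The core of the argument is to rule out both shapes using the rigidity of $H$. Here I would exploit the Hadamard identities \eqref{eq: hadGraph amply parameters} and \eqref{eq: 3 indep vertices have same nb of nbs}: for blue vertices in distinct blue $2$-cliques they give $|N^R(b)\cap N^R(b')|=\nicefrac{s}{2}$ and $|N^R(b)\cap N^R(b')\cap N^R(b'')|=\nicefrac{s}{4}$, and symmetrically with the colours exchanged. Since $(G,\colouring)$ is \tupleRegular{2} and \tupleRegular{3}, the values $\lambda^R(\{y,b\})$ and $\lambda^R(\{y,b,b'\})$ depend only on the isomorphism types of the corresponding induced subgraphs; transferring the identities above to $y$ turns the unknown connection of $y$ into a system of incidence conditions between $N^R(y)$ and the columns of $H$. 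Writing $N^R(y)$ as $\pm1$-data on the red $2$-cliques, the pairwise conditions together with the orthogonality $HH^{\mathsf T}=s\mathbb{1}_s$ already force rigid parameter constraints (for instance, equations linking the two admissible values of $\lambda^R(\{y,\cdot\})$, which in the transversal shape even force $s$ to be a perfect square).

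To close the argument I would package these incidences as a symmetric design, mimicking Lemma~\ref{lem: imprimitive both edgeless or complete} and the proof of Theorem~\ref{thm:high-regularity-of-hadamard-graphs}\eqref{itm: 4tuple-regular-hadamard-graphs}: the relevant block sizes are constant by \tupleRegular{2}, and the triple intersection numbers are constant by \tupleRegular{3} supplemented by \eqref{eq: 3 indep vertices have same nb of nbs}, so the resulting structure is a symmetric $3$-design. Hughes' theorem \eqref{eq: degenerate symm 3-designs} then forces it to be degenerate, so its block size $k$ lies in $\{0,1,v-1,v\}$. The values $0$ and $v$ give $N^R(y)\in\{\emptyset,R\}$, while the values $1$ and $v-1$ describe a matching-type connection that is incompatible with the dichotomy of the first paragraph (a single vertex is neither a full red $2$-clique nor a transversal of all red $2$-cliques). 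This is the desired contradiction.

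I expect the main obstacle to be the transversal (``Hadamard'') shape. There the orthogonality of $H$ makes the \emph{local} incidence data between $y$ and $R\cup B$ nearly self-consistent, so a naive one-vertex design is degenerate-but-admissible and produces no contradiction by itself. The delicate point is therefore to set up the design \emph{globally}---using the whole yellow colour class $Y$, whose internal structure is $s_YK_{t_Y}$, together with \emph{both} its red and its blue connections---so that the simultaneous \tupleRegular{3} constraints across all three colour classes over-determine the $\pm1$-data, and to verify carefully that the internal edges of the red and blue $2$-cliques do not spoil the constant block size and constant triple-coverage required to invoke Hughes' theorem.
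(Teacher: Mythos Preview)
Your case reduction is correct and matches the paper: applying Theorem~\ref{thm: imprimitive graphs two colours main} to $(G,\chi)[R\cup Y]$ and $(G,\chi)[B\cup Y]$, each is either homogeneously connected, a blow-up, or an extended Hadamard graph (the matching option being excluded since $G[R]\cong G[B]\cong sK_2$), and the task is to rule out everything except ``both homogeneous''.

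The genuine gap is the proposed endgame. You want to package the incidences as a symmetric $3$-design and invoke Hughes, but you never specify the points and blocks, and there is no natural candidate: symmetry would require $|Y|$ (or some derived set) to match $|R|$ or $s$, which is not given. The paper does \emph{not} use Hughes here; its arguments are direct and differ by case.

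For the transversal (Hadamard) shape of $R\cup Y$---the case you flag as the obstacle---the paper splits again on the shape of $B\cup Y$. If $B\cup Y$ is \emph{not} Hadamard (so homogeneous or a blow-up), then for every $b\in B$ one has $G[\{y,b\}]\cong G[\{\tilde y,b\}]$, and \tupleRegular{2} together with $N^R(y)\sqcup N^R(\tilde y)=R$ forces $\lambda^R(\{y,b\})=\nicefrac{s}{2}$ for all $b$. Encoding $N^R(y)$ as a $\pm1$-vector on the red $2$-cliques, this says that vector is orthogonal to every column of $H$, impossible since $H$ has full rank. If $B\cup Y$ \emph{is} also Hadamard, the symmetric $2$-design with points $N^R(y_1)$ and blocks $\{N(b)\cap N^R(y_1):b\in N^B(y_1)\}$ forces the Hadamard matrix (after relabelling) to be regular, giving $\lambda^R(\{b,y_1\})=\nicefrac{(s+\sqrt s)}{2}$; a \tupleRegular{3} computation with two non-adjacent yellow vertices then yields $\lambda^B(\{y_1,r,\tilde y_2\})=\nicefrac{s}{4}$, and a double count over $N^B(y_1)\cap N^B(\tilde y_2)$ produces $\nicefrac{s}{4}(s+\sqrt s)=\nicefrac{s^2}{4}$, a contradiction.

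For the two remaining blow-up configurations the paper uses short ad hoc counts (finding explicit $3$-sets of the same isomorphism type with different $\lambda^R$- or $\lambda^Y$-values), again with no design theory. So the ideas you are missing are precisely (a) the full-rank orthogonality argument when only one of the two yellow bipartitions is Hadamard, and (b) the regular-Hadamard double count when both are; neither goes through a symmetric $3$-design.
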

\begin{proof}
	Throughout this proof, the unique equally-coloured neighbour of a vertex $v$ in a Hadamard graph is denoted by $\tilde{v}$.
	
	\medskip
	\noindent
	Suppose that $(G, \chi)[R \cup Y]$ is an extended Hadamard graph.
	If $(G, \chi)[B \cup Y]$ is not an extended Hadamard graph, then by Theorem~\ref{thm: imprimitive graphs two colours main} the sets $B$ and $Y$ are either homogeneously connected or $G[B \cup Y]$ can be obtained from a matching-connected graph $(G', \chi')$ by first applying a blow-up to the $\chi(B)$-coloured vertices and then to the $\chi(Y)$-coloured vertices of $G'$.
	Suppose $y \in Y$ and $b \in B$.
	Observe that $G[\{y,b\}] \cong G[\{\tilde{y}, b\}]$ (independent of whether $B$ and $Y$ are homogeneously connected or $G[B\cup Y]$ is a blow-up).
	With $N^R(b)\cap N^R(\tilde{b}) = \emptyset$ and $\lambda^R(\{y\}) = s$ it follows that $\lambda^R(\{b,y\}) = \lambda^R(\{\tilde{b}, y\}) = \nicefrac{s}{2}$.
	Hence, the $s$-dimensional vector $v$ with $v_i = 1$ if $b_i^{+} \in N(y)$ and $v_i = -1$ otherwise is orthogonal to each of the $s$ columns of $H$, which is a contradiction since these form an orthogonal basis.
	
	If $(G, \chi)[B\cup Y]$ is an extended Hadamard graph, then
	let $y_1, y_2 \in Y$ be non-adjacent vertices.
	Consider the design with points $N^R(y_1)$ and blocks
	$\{\!\{N(b)\cap N^R(y_1) \mid b \in N^B(y_1)\}\!\}$.
	Since $(G, \chi)$ is \tupleRegular{3}, we obtain that this is a symmetric 2-design.
	In particular, relabelling the vertices of $G$ in such a way that $N^R(y_1) = \{r^+_i \mid 1 \leq i \leq s\}$ and $N^B(y_1) = \{c^+_i \mid 1 \leq i \leq s\}$ yields an extended Hadamard graph whose underlying Hadamard matrix is regular (that is, all rows have the same number of positive entries).
	This implies
	\begin{equation} \label{eq: lambda-b-y}
		\text{$\lambda_G^R(\{b,y_1\}) = \nicefrac{1}{2}(s+\sqrt{s})$ for every $b \in N_G^B(y_1)$.}
	\end{equation}
	Let $r \in N^R(y_1) \cap N^R(y_2)$.
	From the 3-tuple regularity of $(G, \colouring)$ we obtain
	\begin{equation*}
		\nicefrac{s}{2} =
		\lambda^B(\{\tilde{y_1}, y_2\}) =  \lambda^B(\{\tilde{y_1},r,y_2\})+\lambda^B(\{\tilde{y_1},\tilde{r},y_2\}) = \lambda^B(\{\tilde{y_1},r,y_2 \}) + \lambda^B(\{y_1,r,\tilde{y_2} \})
	\end{equation*}
	and, hence,
	$\lambda^B(\{y_1, r, \tilde{y_2} \}) = 
	\lambda^B(\{\tilde{y_1}, r, y_2 \}) 	 =  
	\nicefrac{s}{2} - \lambda^B(\{y_1, r, \tilde{y_2} \})$.
	In particular,
	\begin{equation} \label{eq: lambda-y1-r-bary2}
		\lambda^B(\set{y_1, r, \tilde{y_2}}) = \lambda^B(\{\tilde{y_1},r, y_2\}) = \nicefrac{s}{4}.
	\end{equation}
	The equations~\eqref{eq: lambda-b-y} and~\eqref{eq: lambda-y1-r-bary2} imply
	\begin{align*}
		\frac{s}{4}(s + \sqrt{s})
		&= \sum_{b \in N^B(y_1) \cap N^B(\tilde{y_2})}\  \left|N^R(b)\cap N^R(y_1)\right|\\
		&= \sum_{r \in N^R(y_1)}\left|N^B(y_1)\cap N^B(\tilde{y_2}) \cap N^B(r)\right|
		= \frac{s^2}{4}
	\end{align*}
	which is a contradiction.
	
	Therefore, we may assume that neither $G[B \cup Y]$ nor $G[R \cup Y]$ is an extended Hadamard graph.
	If $G[B \cup Y]$ and $G[R \cup Y]$ both arise from blow-ups, then there is a exists a triangle in $G$ formed by vertices $r \in R$, $b \in B$ and $y \in Y$.
	Let $b' \in N^B(r)\setminus \{b\}$.
	Since $G[R\cup B]$ is a Hadamard-graph, the vertices $b$ and $b'$ are non-adjacent.
	This implies that $b' \notin N(y')$ for all $y' \in N^Y(y) \cup \{y\}$.
	Altogether, we have $\lambda^Y(\{r,b\}) = |N^Y(y)| + 1$ and $\lambda^Y(\{r, b'\}) = 0$ which contradicts that $(G, \chi)$ is \tupleRegular{3}.
	
	Hence, we may assume that $G[R \cup Y]$  arises from blow-ups whereas $B$ and $Y$ are homogeneously connected.
	Since $G[R\cup B]$ is an extended Hadamard graph, we know that there are two non-adjacent vertices $b, b' \in B$ and two 2-cliques $C^R_1, C^R_2 \subseteq R$ such that $|C^R_1 \cap N^R(b) \cap N^R(b')| = 1$ and $|C^R_2 \cap N^R(b) \cap N^R(b')| = 0$. 
	Choose $y_i \in Y$ such that $N^R(y_i) = C^R_i$ for $i \in \{1,2\}$.
	We obtain $\lambda^R(\{y_1, b , b'\}) = 0$ and $\lambda^R(\{y_2, b, b'\}) = 2$, which is a contradiction since $(G, \chi)[\{y_1, b , b'\}] \cong (G, \chi)[\{y_2, b , b'\}]$.
\end{proof}

\section{Reductions and classification theorems}\label{sec:classfication:thms}

Towards describing our classification theorem, we observe that there are four simple, generic operations that allow us to create highly regular graphs from other ones.

A graph~$(H, \chi_H)$ is the \emph{colour complementation} of another graph~$(G, \chi_G)$ if~$H$ is obtained from~$G$ by replacing all edges with endpoints in two colour classes~$C$ and~$C'$ (possibly~$C=C')$ with non-edges and vice versa. More formally, 
if~$V(G)=V(H)$ and~$\chi_H=\chi_G$ and there are possibly equal colour classes~$C,C'$ such that~$E(H)= (E(G) \setminus (C\times C'))\cup ((C\times C') \setminus (E(G)\cup C^2\cup {C'}^2))$.

\begin{lemma}
	Suppose~\highlyRegularnok{} $\in \{UH,TR\}$ and~$k\in \mathbb{N}_{\geq 1}$. If~$(H, \chi_H)$ is the colour complementation of~$(G, \chi_G)$, then~$(H, \chi_H)$ is~\highlyRegular{k}{} if and only~$(G, \chi_G)$ is \highlyRegular{k}{}.
\end{lemma}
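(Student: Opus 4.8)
The plan is to treat colour complementation as an \emph{involutive} operation on coloured graphs, to show that it commutes with taking induced subgraphs and preserves colour-preserving isomorphisms, and then to deduce the homogeneity statement almost formally while handling the regularity statement by an extra inclusion--exclusion bookkeeping. Fix the two colour classes $C=\chi_G^{-1}(c)$ and $C'=\chi_G^{-1}(c')$ witnessing the complementation and write $\Phi$ for the operation that flips the adjacency of every pair $\{u,v\}$ with $\{\chi(u),\chi(v)\}=\{c,c'\}$ and leaves all other pairs, as well as the colouring, untouched. Since flipping twice is the identity, $\Phi$ is an involution, so $(G,\chi_G)=\Phi(H,\chi_H)$ and it suffices to prove one implication. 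The two facts I would establish first are: (a) $\Phi$ commutes with induced subgraphs, that is $\Phi((G,\chi)[U])=(\Phi(G,\chi))[U]$, because whether a pair inside $U$ is flipped depends only on the colours of its endpoints; and (b) every colour-preserving isomorphism $\varphi\colon(G_1,\chi_1)\to(G_2,\chi_2)$ is also a colour-preserving isomorphism $\Phi(G_1,\chi_1)\to\Phi(G_2,\chi_2)$. Fact (b) is immediate: as $\varphi$ preserves colours, a pair is a flip pair in $G_1$ exactly when its image is a flip pair in $G_2$, so flipping these pairs on both sides leaves $\varphi$ edge-preserving.

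For the ultrahomogeneous case I would argue as follows. Let $\varphi$ be a colour-preserving isomorphism between $(H,\chi_H)[U]$ and $(H,\chi_H)[U']$ with $|U|=|U'|\leq k$. By (a) these subgraphs are $\Phi((G,\chi_G)[U])$ and $\Phi((G,\chi_G)[U'])$, so applying (b) with the involution $\Phi$ turns $\varphi$ into a colour-preserving isomorphism between $(G,\chi_G)[U]$ and $(G,\chi_G)[U']$. As $(G,\chi_G)$ is \ultraRegular{k}, this extends to a colour-preserving automorphism $\psi$ of $(G,\chi_G)$. A final application of (b), now to $\psi$ viewed as an automorphism, shows $\psi\in\Aut((H,\chi_H))$, and since $\Phi$ does not move vertices, $\psi$ still extends $\varphi$. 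Hence $(H,\chi_H)$ is \ultraRegular{k}.

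For the tuple-regular case, let $U,U'\subseteq V(H)$ with $(H,\chi_H)[U]\cong(H,\chi_H)[U']$ via a colour-preserving $\varphi$, and fix a colour class $R$ of colour $r$. By (a) and (b), $\varphi$ is also an isomorphism $(G,\chi_G)[U]\to(G,\chi_G)[U']$. The key observation is that for a vertex $w\in R\setminus U$ and $u\in U$ the status of $wu$ is flipped by $\Phi$ precisely when $u$ lies in the colour-determined set $U_2\coloneqq\{u\in U\mid \{\chi(u),r\}=\{c,c'\}\}$ (read as $U_2=\{u\in U\mid \chi(u)=r\}$ in the degenerate case $c=c'=r$), independently of the choice of $w$. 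Writing $U_1\coloneqq U\setminus U_2$, a vertex $w\in R\setminus U$ lies in $N^R(U)$ in $H$ exactly when $w$ is $G$-adjacent to all of $U_1$ and $G$-nonadjacent to all of $U_2$, so inclusion--exclusion over $U_2$ gives $\lambda^R_H(U)=\sum_{T\subseteq U_2}(-1)^{|T|}\,\big(\lambda^R_G(U_1\cup T)-|N^R_G(U_1\cup T)\cap U|\big)$. Because $\varphi$ preserves colours it maps $U_1,U_2$ onto the analogous sets $U_1',U_2'$ of $U'$ and restricts to colour-preserving isomorphisms $(G,\chi_G)[U_1\cup T]\cong(G,\chi_G)[U_1'\cup\varphi(T)]$; invoking \tupleRegular{k} of $(G,\chi_G)$ for each $T$ (note $|U_1\cup T|\leq k$) makes the corresponding main summands equal, while the correction terms $|N^R_G(U_1\cup T)\cap U|$ match because they are intrinsic to the induced subgraph on $U$. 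Summing yields $\lambda^R_H(U)=\lambda^R_H(U')$, so $(H,\chi_H)$ is \tupleRegular{k}.

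The conceptual content lies entirely in facts (a) and (b), and the ultrahomogeneous direction is then essentially free. The genuine obstacle is the tuple-regular bookkeeping, where I expect the care to go into three points: isolating the colour-defined flip set $U_2$ and confirming it is $\varphi$-invariant, excluding $w\in U$ so that no self-loops are miscounted as common neighbours, and verifying that the inclusion--exclusion correction terms transform correctly under $\varphi$. Each step is routine, but a naive argument that ignores the self-loop exclusion or the colour-dependence of the flip set would give a wrong count.
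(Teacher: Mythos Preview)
Your proof is correct and rests on exactly the observation the paper uses: colour-preserving isomorphisms between induced subgraphs of $(G,\chi_G)$ and of $(H,\chi_H)$ coincide (your facts (a) and (b)). The paper's entire proof is that single sentence; for \ultraRegular{k} your argument and the paper's are identical. For \tupleRegular{k} you go further and make explicit what the paper leaves to the reader, expressing $\lambda^R_H(U)$ via inclusion--exclusion in terms of the values $\lambda^R_G(U_1\cup T)$ and verifying that both the main terms (by $k$-TR of $G$) and the correction terms $|N^R_G(U_1\cup T)\cap U|$ (as data internal to $(G,\chi_G)[U]$) match under $\varphi$. This bookkeeping is sound; it is precisely what one must supply to justify the one-liner directly from the $\lambda^R$ definition, whereas the paper implicitly leans on the equivalent extension-count formulation of $k$-TR (stated in its introduction), under which the claim is immediate once isomorphism types of induced subgraphs correspond bijectively via $\Phi$.
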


\begin{proof}
	The lemma follows since the isomorphisms between induced subgraphs of~$(H, \chi_H)$ are precisely the isomorphisms between induced subgraphs of~$(G, \chi_G)$.
\end{proof}

The second operation we need to consider is a homogeneous blow-up.

\begin{lemma}
	Suppose~\highlyRegularnok{} $\in \{UH,TR\}$ and~$k\in \mathbb{N}_{\geq 1}$. If~$(H, \chi_H)$ is a homogeneous blow-up of~$(G, \chi_G)$, then~$(H, \chi_H)$ is~\highlyRegular{k}{} if and only~$(G, \chi_G)$ is \highlyRegular{k}{}.
\end{lemma}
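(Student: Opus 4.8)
The plan is to use that the blow-up structure is canonically recoverable from $(H,\chi_H)$ and to transport statements between $H$ and $G$ along a projection that collapses each blown-up clique to one representative. Write $\chi(R)$ for the colour of the blown-up class and $C_r$ for the $t$-clique replacing $r\in R$, so that $N_H(r')=N_G(r)\cup(C_r\setminus\{r'\})$ for $r'\in C_r$. Since $R$ is independent, distinct cliques are pairwise non-adjacent, and therefore the cliques $C_r$ are precisely the connected components of the subgraph of $H$ induced by the colour class $\chi(R)$; in particular the partition $\{C_r\}$ is an isomorphism invariant. The first step is to record the consequences: every colour-preserving automorphism of $H$ permutes the cliques and hence induces, via the projection $\pi$ (the identity off $\chi(R)$ and $C_r\mapsto r$ on $\chi(R)$), a colour-preserving automorphism of $G$, while conversely every $\alpha\in\Aut(G)$ lifts to $H$ by choosing a bijection $C_r\to C_{\alpha(r)}$ for each $r$. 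Likewise, for every $S\subseteq V(H)$ the isomorphism type of $H[S]$ is determined by the type of $G[\pi(S)]$ together with the block sizes $|S\cap C_r|$, and every colour-preserving isomorphism $\varphi\colon H[S]\to H[S']$ descends to a colour-preserving isomorphism $\bar\varphi\colon G[\pi(S)]\to G[\pi(S')]$ with $|\pi(S)|\le|S|$ and with matching block sizes.

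For the ultrahomogeneous case I would argue both implications through this correspondence. If $(G,\chi_G)$ is \ultraRegular{k} and $\varphi\colon H[S]\to H[S']$ with $|S|\le k$ is given, then $|\pi(S)|\le k$, so $\bar\varphi$ extends to some $\alpha\in\Aut(G)$; lifting $\alpha$ while choosing each bijection $C_r\to C_{\alpha(r)}$ to agree with $\varphi$ on $S\cap C_r$ (possible because all cliques have the same size $t$ and $\varphi(S\cap C_r)=S'\cap C_{\alpha(r)}$) yields an automorphism of $H$ extending $\varphi$. Conversely, if $(H,\chi_H)$ is \ultraRegular{k} and $\psi\colon G[T]\to G[T']$ with $|T|\le k$ is given, I would lift $T$ and $T'$ to subsets $S,S'$ containing exactly one vertex of each relevant clique; then $H[S]\cong G[T]$, the map $\psi$ lifts to an isomorphism $H[S]\to H[S']$ of order $|T|\le k$, its extension $\beta\in\Aut(H)$ permutes the cliques, and the induced $\bar\beta\in\Aut(G)$ extends $\psi$.

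For the tuple-regular case I would compute $\lambda^C_H(S)$ explicitly in terms of the projection. If $C\neq\chi(R)$, then a common neighbour of $S$ in $C$ is a common neighbour of $\pi(S)$ in $C$ and conversely, so $\lambda^C_H(S)=\lambda^C_G(\pi(S))$. If $C=\chi(R)$, then a common $\chi(R)$-neighbour must lie inside a single clique, so $\lambda^{\chi(R)}_H(S)=0$ whenever $S$ meets two distinct cliques, $\lambda^{\chi(R)}_H(S)=t\cdot\lambda^{\chi(R)}_G(\pi(S))$ when $S$ meets no clique, and $\lambda^{\chi(R)}_H(S)$ equals $t-m$ or $0$ according to whether the block $S\cap C_r$ of size $m$ is joined to every remaining vertex of $S$ when $S$ meets exactly one clique $C_r$. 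Each of these expressions depends only on the isomorphism type of $H[S]$ and on $G$-quantities that are constant on isomorphism types once $(G,\chi_G)$ is \tupleRegular{k}, which gives the forward direction. For the converse I would again lift $T$ to $S$ with one vertex per clique and read the same formulas backwards, using $t\ge2$ to cancel the factor $t$ and the independence of $R$ to dispose of the case $T\cap R\neq\emptyset$, where $\lambda^{\chi(R)}_G(T)=0$ trivially.

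The main obstacle is the colour class $C=\chi(R)$ in the tuple-regular direction: unlike an unchanged colour class, its common-neighbour count is confined to the at most one clique actually met by $S$, so the count splits into the three cases above and involves the awkward factor $t-m$. The crux is to verify that the combinatorial data entering these formulas—the block sizes, whether $S$ meets one clique or several, and whether a block is completely joined to the rest of $S$—are genuine isomorphism invariants of $H[S]$, so that equality of $\lambda^{\chi(R)}_H$ on isomorphic sets reduces cleanly to the tuple regularity of $(G,\chi_G)$.
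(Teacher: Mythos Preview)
Your proposal is correct and follows essentially the same approach as the paper. Both arguments rest on the projection $\pi\colon V(H)\to V(G)$ collapsing each clique to its origin, the observation that the isomorphism type of $H[S]$ is determined by that of $G[\pi(S)]$ together with the block sizes $|S\cap C_r|$ (what the paper calls the \emph{annotated} subgraph), and the ability to lift and project automorphisms; the only cosmetic difference is that for tuple regularity the paper counts one-vertex extensions fibre by fibre over $V(G)$, whereas you compute $\lambda^C_H(S)$ directly by cases, which amounts to the same thing.
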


\begin{proof}Assume that $(G, \chi_G)$ has a colour class, and call the vertices within it red.
	Assume further that the red vertices in $(G, \chi_G)$ form an independent set and $(H, \chi_H)$ is obtained by blowing-up the red vertices in~$(G, \chi_G)$ to~$t$-cliques. For a vertex~$w\in V(H)$ we denote by~$\overline{w}$ the vertex of~$V(G)$ from which it originated. (I.e., if~$w$ is red, then it is the red vertex of~$G$ that was blown up to create~$w$. If~$w$ is not red, then~$\overline{w}=w$.)
	
	We call two red vertices of~$(H, \chi_H)$ equivalent if they are adjacent. Vertices of other colours are only equivalent to themselves.
	Every induced subgraph~$X$ of~$(H, \chi_H)$ is associated with an annotated induced subgraph~$\overline{X}$ of~$(G, \chi_G)$ on vertex set~$\{\overline{x}\mid x\in V(X)\}$, where red vertices~$w$ in~$\overline{X}$ are annotated with the number equivalent red vertices of~$H$ they represent.
	
	\noindent \textbf{Observation 1:} Two induced subgraphs of~$(H, \chi_H)$ are isomorphic exactly if their annotated subgraphs of~$(G, \chi_G)$ are isomorphic under an isomorphism that respects annotations. 
	
	Call a bijection between two sets of vertices in~$(H, \chi_H)$ \emph{admissible} if it  maps equivalent vertices to equivalent vertices.
	Note that every isomorphism between two induced subgraphs of~$H$
	is admissible. Every admissible map~$\varphi$ induces a bijection~$\overline{\varphi}$ between vertex sets in~$(G, \chi_G)$.
	An admissible map~$\varphi$ is an isomorphism exactly if~$\overline{\varphi}$ is an isomorphism respecting annotations. 
	
	Note that for every annotated induced subgraph~$Y$ of~$(G, \chi_G)$ whose red annotations are at most~$t$ there is an induced subgraph~$X$ of~$(H, \chi_H)$ so that~$\overline{X}=Y$. 
	
	Regarding ultrahomogeneity, suppose~$(H, \chi_H)$ is  k-$UH$ and suppose there is an isomorphism~$\varphi'$ between two induced subgraphs~$Y_1$ and~$Y_2$ of~$(G, \chi_G)$ of order at most~$k$.
	Consider two induced subgraphs~$X_1$ and~$X_2$ of~$(H, \chi_H)$ whose associated annotated graphs are~$\overline{X_1}=Y_1$ and~$\overline{X_2}=Y_2$ with all annotations of red vertices being~1. 
	It follows that between~$X_1$ and~$X_2$ there is a isomorphism~$\varphi$ with~$\overline{\varphi}=\varphi'$. Since~$(H, \chi_H)$ is k-$UH$, in~$(H, \chi_H)$ there is an automorphism~$\psi$ extending~$\varphi$. This automorphism induces an automorphism~$\overline{\psi}$ of~$(G, \chi_G)$ extending~$\varphi'$.
	Conversely if~$(G, \chi_G)$ is k-$UH$ and there is an isomorphism~$\varphi$ between two induced graphs~$X_1$ and~$X_2$ in~$(H, \chi_H)$, this induces an isomorphism between the associated annotated subgraphs~$\overline{X_1}$ and~$\overline{X_2}$. If this isomorphism extends to an automorphism of~$(G, \chi_G)$, then this automorphism lifts to an automorphism of~$(H, \chi_H)$ extending~$\varphi$.
	
	Regarding tuple regularity, we use the following observation that says that the number of 1-vertex extensions of an induced subgraph~$X$ of a particular isomorphism type can be deduced from the annotated graph.

	\noindent \textbf{Observation 2:} 
	Assume that~$X$ is an induced subgraph of~$(H, \chi_H)$ and let~$\overline{X}$ be the associated annotated subgraph of~$(G, \chi_{G})$. For~$v\in V(G)$, the number of 1-vertex extensions~$X^{+w}$ of~$X$ by a new vertex~$w$ satisfying~$\overline{w}=v$ is equal to
	
	\begin{itemize}
		\item $1$ \tabto{1cm} if~$v$ is not in~$V(\overline{X})$ and not red,
		\item $t$ \tabto{1cm}if~$v$ is not in~$V(\overline{X})$ and \phantom{not} red,
		\item $0$ \tabto{1cm}if~$v$ is \phantom{not} in~$V(\overline{X})$ and not red, and
		\item $t-i$ \tabto{1cm} if~$v$ is \phantom{not}  in~$V(\overline{X})$ and \phantom{not} red with annotation~$i$.
	\end{itemize}
	
	This implies that two isomorphic induced subgraphs in~$(G, \chi_G)$ have the same number of 1-vertex extensions of each isomorphism type if and only if isomorphic lifts of the two graphs have the same number of 1-vertex extensions of each isomorphism type.
\end{proof}

The third operation duplicates all vertices in an independent colour class and connects the duplicates via a matching as follows.
Let $(G,\chi)$ and be $(G', \colouring')$ coloured graphs. A \emph{(possibly) colour-permuting isomorphism} is a bijection $\varphi\colon V(G) \to V(G')$ that is an isomorphism from~$(G, \colouring\circ \pi)$ to $(G', \colouring')$ for some permutation~$\pi$ of the colours~$C$. 

Assume there are distinct colour classes~$R$ and $B$ of $(G,\chi)$ each inducing an independent set for which~$M \coloneqq E(G[R\cup B])$ is a perfect matching of $G[R\cup B]$.
If the bijection from $V(G)\setminus R$ to $V(G) \setminus B$ that is induced by $M$ and fixes all vertices in $V(G)\setminus (R \cup B)$ is a colour-permuting isomorphism, then we say that~$(G,\chi)$ is a \emph{homogeneous matching extension} of~$(G,\chi)[V(G)\setminus R]$ .

\begin{lemma}
	\label{lem:matching}
	Suppose \highlyRegularnok{} $\in \{UH,TR\}$ and~$k\in \mathbb{N}_{\geq 1}$. If~$(H, \colouring_H)$ is a homogeneous matching extension of~$(G, \colouring_G)$, then~$(H, \colouring_H)$ is~\highlyRegular{k}{} if and only~$(G, \colouring_G)$ is \highlyRegular{k}{}.
\end{lemma}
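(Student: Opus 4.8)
The plan is to use the fact that the matching extension replaces each vertex $b$ of the duplicated colour class by an adjacent pair $\{b, r_b\}$, where $r_b \in R$ is its matched partner. First I would unwind the colour-permuting isomorphism from the definition to record the local structure: because $M = E(H[R\cup B])$ is a perfect matching and $R, B$ are independent, each $r_b$ is the unique red neighbour of $b$, the vertices $b$ and $r_b$ have the same neighbours outside $\{b, r_b\}$, and the only $R$--$B$ edges are the matching edges. Writing $(G, \chi_G) = (H, \chi_H)[V(H)\setminus R]$, this says precisely that $H$ arises from $G$ by attaching to every blue vertex $b$ a fresh independent red twin $r_b$ with $N_H(r_b) = \{b\}\cup N_G(b)$.

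Next I would introduce the base map $\beta\colon V(H)\to V(G)$ that fixes $V(G)$ and sends $r_b\mapsto b$, and set $\pi(S)=\beta(S)$ for $S\subseteq V(H)$. The central observation (playing the role of Observation~1 in the blow-up proof) is that $H[S]\cong H[S']$ by a colour-preserving isomorphism if and only if $G[\pi(S)]\cong G[\pi(S')]$ by a colour-preserving isomorphism respecting, at each base $b\in\pi(S)$, the annotation that records which of $b$ and $r_b$ lie in $S$. Forwards this holds because any colour-preserving isomorphism of $H$ fixes the colours red and blue, hence maps matched pairs to matched pairs and lone vertices to lone vertices, so it descends along $\beta$; backwards an annotation-respecting isomorphism lifts vertex by vertex.

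For ultrahomogeneity I would combine this correspondence with a canonical passage between automorphisms: every $\varphi\in\Aut(G)$ extends to $\widehat{\varphi}\in\Aut(H)$ via $r_b\mapsto r_{\varphi(b)}$, and conversely every element of $\Aut(H)$ preserves $B$, the remaining colour classes and the matching, so it restricts to $\Aut(G)$. Given a colour-preserving isomorphism between two induced subgraphs of $H$ of order at most $k$, I would project it (note $|\pi(S)|\le|S|\le k$), extend the projection using that $G$ is \ultraRegular{k}, lift the resulting automorphism, and verify it agrees with the original map on $S$. The converse is immediate: for $T\subseteq V(G)$ one has $H[T]=G[T]$, so any automorphism of $H$ extending a given isomorphism restricts to the desired automorphism of $G$.

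For tuple regularity I would compute the parameters $\lambda^C_H(U)$ directly. A short case distinction gives $\lambda^{O_i}_H(U)=\lambda^{O_i}_G(\pi(U))$ for every colour class $O_i\notin\{R,B\}$, while $\lambda^B_H(U)$ (and symmetrically $\lambda^R_H(U)$) is either $0$---when $U$ already meets $B$, or contains two vertices of distinct base---or equals $\lambda^B_G(\pi(U))$ when $U$ consists only of vertices of the other classes, or is a $0/1$ quantity that is visibly an isomorphism invariant of $H[U]$ when $U$ contains exactly one red vertex. Since $H[U]\cong H[U']$ forces $G[\pi(U)]\cong G[\pi(U')]$ with $|\pi(U)|=|\pi(U')|\le k$, each equality $\lambda^C_H(U)=\lambda^C_H(U')$ then follows from \tupleRegular{k} of $G$ (for the $\lambda_G$ terms) or automatically (for the invariant terms); the converse again uses $H[T]=G[T]$ for $T\subseteq V(G)$. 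The main obstacle, compared with the blow-up, is the colour asymmetry within a matched pair: the two duplicated vertices are not interchangeable, so the projection must carry the annotation data and one has to check carefully that a lone red vertex yields a forced $0/1$ contribution rather than a genuine parameter of $G$; once this is in place the remaining verifications are routine.
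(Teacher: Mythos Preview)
Your approach is correct and is essentially the same as the paper's: both rest on the base/projection map together with annotations recording which vertices of each matched pair lie in a given subgraph, exactly as in the blow-up lemma. The paper simply observes that a homogeneous matching extension is a blow-up into $2$-cliques in which the two vertices of each clique receive distinct colours, and refers back to the blow-up proof rather than repeating it; your direct case analysis for the $\lambda$-parameters amounts to unfolding that reference explicitly.
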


\begin{proof}
	The proof is similar to the previous one. For this note that a homogeneous matching extension is actually a homogeneous blow up with vertices in one colour class replaced be 2-cliques and where the two vertices in each clique obtaining distinct colours.
\end{proof}
For ultrahomogeneity this lemma is also proven in~\cite{truss:homogeneous-coloured-multipartite-graphs}.

Finally, the fourth and last operation combines two graphs with disjoint colours.
We say that~$(H, \chi_H)$ is a \emph{colour disjoint union} of the graphs~$(G, \chi_G)$ and~$(G', \chi_{G'})$ if the colours of~$(G, \chi_G)$ and~$(G', \chi_{G'})$ are disjoint and~$H$ is the disjoint union of~$G$ and~$G'$, where vertices in~$H$ inherit their colour from the respective graph they originate from.

\begin{lemma}
	Suppose~\highlyRegularnok{} $\in \{UH,TR\}$ and~$k\in \mathbb{N}_{\geq 1}$. If~$(H, \chi_H)$ is a colour disjoint union of~$(G, \chi_G)$ and~$(G', \chi_{G'})$, then~$(H, \chi_H)$ is~\highlyRegular{k}{} if and only~$(G, \chi_G)$ and~$(G', \chi_{G'})$ are both \highlyRegular{k}{}. 
\end{lemma}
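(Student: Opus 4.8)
The plan is to exploit that a colour disjoint union decouples its two factors $(G,\chi_G)$ and $(G',\chi_{G'})$ both combinatorially and chromatically: in $H$ there are no edges between $V(G)$ and $V(G')$, and no colour is shared. First I would record the key structural observation. Since $\chi_H$ uses only colours of $G$ on $V(G)$ and only colours of $G'$ on $V(G')$, and these colour sets are disjoint, every colour class of $H$ lies entirely in $V(G)$ or entirely in $V(G')$; moreover $G=H[V(G)]$ and $G'=H[V(G')]$ are induced subgraphs whose colour classes are exactly those of $G$ and $G'$. Consequently, any colour-preserving isomorphism $\varphi\colon H[U]\to H[U']$ must send $U\cap V(G)$ to $U'\cap V(G)$ and $U\cap V(G')$ to $U'\cap V(G')$, and since $H[U]$ is the disjoint union of $G[U\cap V(G)]$ and $G'[U\cap V(G')]$, the map $\varphi$ splits into colour-preserving isomorphisms $\varphi_G$ and $\varphi_{G'}$ of the two parts.

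For the $UH$ case this decomposition is enough. For the ``if'' direction I would take an isomorphism between two induced subgraphs of $H$ of order at most $k$, split it as $\varphi_G,\varphi_{G'}$ (each of order at most $k$), extend each to an automorphism of its factor using that $G$ and $G'$ are \ultraRegular{k}, and form the disjoint union of these two automorphisms. This union is colour-preserving, is an automorphism of $H$ (it respects the edgeless cut between the parts), and restricts to the given $\varphi_G,\varphi_{G'}$, hence extends the original isomorphism. For the ``only if'' direction, an isomorphism between induced subgraphs of $G$ of order at most $k$ is simultaneously an isomorphism between induced subgraphs of $H$; I would extend it to an automorphism $\psi$ of $H$, note that colour-disjointness forces $\psi(V(G))=V(G)$, and restrict $\psi$ to $V(G)$ to obtain the desired automorphism of $G$. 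The argument for $G'$ is symmetric.

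For the $TR$ case the extra ingredient is the behaviour of $\lambda^R$. Fixing a colour class $R$, say with $R\subseteq V(G)$, I would observe the dichotomy: if $U$ contains a vertex $u\in V(G')$, then $N_H(u)\subseteq V(G')$ is disjoint from $R$, so $N^R(U)=\emptyset$ and $\lambda^R(U)=0$; otherwise $U\subseteq V(G)$ and $N^R_H(U)=N^R_G(U)$, whence $\lambda^R_H(U)=\lambda^R_G(U)$. For the ``if'' direction, given $U,U'$ with $H[U]\cong H[U']$ and $|U|=|U'|\le k$, the decomposition shows that $U$ meets $V(G')$ if and only if $U'$ does; in that case both $\lambda^R$-values vanish, and otherwise both equal the corresponding value in $G$, which agree because $G[U]\cong G[U']$ and $G$ is \tupleRegular{k}. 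The ``only if'' direction is immediate: for $U,U'\subseteq V(G)$ with $G[U]\cong G[U']$ one has $\lambda^R_H=\lambda^R_G$ on both, so \tupleRegular{k} of $H$ passes to $G$; again $G'$ is symmetric.

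I do not anticipate a genuine obstacle: all the content sits in the decoupling observation of the first paragraph, after which both cases are routine bookkeeping. The only point needing a little care is making the $\lambda^R$ dichotomy precise---that $\lambda^R(U)$ vanishes exactly when the tuple strays into the part not containing $R$---and checking that ``$U$ meets the other part'' is an isomorphism invariant, so that the two compared tuples are always treated consistently.
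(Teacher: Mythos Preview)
Your proposal is correct and follows exactly the approach of the paper: the paper's proof consists of the single observation that an isomorphism between induced subgraphs of $(H,\chi_H)$ decomposes into isomorphisms between induced subgraphs of the two factors and vice versa, which is precisely the decoupling observation in your first paragraph. You simply spell out in detail the bookkeeping for the UH and TR cases that the paper leaves implicit.
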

\begin{proof}
	The lemma follows from the observation that an isomorphism between two induced subgraphs in~$(H, \chi_H)$ naturally decomposes into two isomorphisms between induced subgraphs of~$(G, \chi_G)$ and between induced subgraphs of~$(G', \chi_{G'})$, respectively, and vice versa.
\end{proof}

We call a graph~$(G, \chi)$ \emph{reduced} if neither~$(G, \chi)$ nor any of the graphs obtainable from~$(G, \chi)$ by a sequence of colour complementations is a colour disjoint union, a homogeneous blow-up, or a homogeneous matching extension.
Let us first summarize the bichromatic case using this terminology.
\begin{theorem}\label{thm:4:reg:irreducible:bichrom}
	Suppose~\highlyRegularnok{} $\in \{UH,TR\}$ and~$k \in \mathbb{N}_{\geq 5}$. The only bichromatic reduced~\highlyRegular{k}{} graph is the extended Hadamard graph corresponding to $\sylvester(2)$, whose underlying uncoloured graph is the Wagner graph (Figure~\ref{fig: 8cycleWithDiagonals}).
\end{theorem}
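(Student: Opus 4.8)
The plan is to reduce the statement entirely to the structural results already proved in Section~\ref{sec:finite-graphs-2colours}, using only that colour complementation preserves \highlyRegularnok{}-regularity and that being \emph{reduced} is invariant under the group generated by the three colour complementations (complement within $R$, within $B$, between $R$ and $B$): since the graphs obtainable from $(G,\chi)$ by sequences of complementations form one orbit, a graph is reduced iff every member of its orbit avoids being a colour disjoint union, homogeneous blow-up, or homogeneous matching extension. Thus any normalisation I perform by complementation lands in the same orbit, so if the normalised graph reduces then so does the original.

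First I would extract the monochromatic structure. Since $k\geq 5$, Lemma~\ref{lem: safeOperations}.\eqref{itm: induced-graph-has-property-pi} shows $G[R]$ and $G[B]$ are monochromatic and \tupleRegular{5}, hence (Table~\ref{tab:classifiaction}) each is, up to complementation, an $sK_t$, the rook's graph $\rookGraph{3}$, or the cycle $\pentagon$; in particular each is ultrahomogeneous and therefore \ultraRegular{4}. I would then eliminate the primitive case: if $G[R]$ (or, symmetrically, $G[B]$) is primitive it must be $\rookGraph{3}$ or $\pentagon$, so Theorem~\ref{thm: 2col primitive graphs main} forces $R$ and $B$ to be homogeneously connected. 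But homogeneous connectedness makes $(G,\chi)$ a colour disjoint union directly (when there are no cross edges) or after complementing between $R$ and $B$ (when all cross edges are present), contradicting reducedness. Hence both $G[R]$ and $G[B]$ are imprimitive, and after complementing within each colour class I may assume $G[R]\cong s_RK_{t_R}$ and $G[B]\cong s_BK_{t_B}$.

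Now I would invoke Theorem~\ref{thm: imprimitive graphs two colours main} and kill three of its four outcomes with reducedness. Its outcome~(i) is homogeneous connectedness, handled exactly as above. In outcome~\eqref{itm: match} both classes are edgeless or complete and $R,B$ are matching-connected; complementing within each class to make it edgeless and, if necessary, between the classes so that the matching lies in $G$, turns $(G,\chi)$ into a homogeneous matching extension of the edgeless graph $G[B]$, because the matching induces a colour-permuting isomorphism between the two independent classes of equal size. Outcome~\eqref{itm: hom blowup} is a homogeneous blow-up outright. All three contradict reducedness, so only the fourth survives: $(G,\chi)$ is an extended Hadamard graph $\extendedHadamardGraph(H)$. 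Finally, since $(G,\chi)$ is \tupleRegular{4}, Theorem~\ref{thm:high-regularity-of-hadamard-graphs}.\eqref{itm: 4tuple-regular-hadamard-graphs} identifies it: the only \tupleRegular{4} extended Hadamard graph is $\extendedHadamardGraph(\sylvester(2))$, which is moreover ultrahomogeneous (hence \highlyRegular{k}{} for every $k$ and both regularity notions) and whose underlying uncoloured graph is the Wagner graph. For the converse I would check that $\extendedHadamardGraph(\sylvester(2))$ is reduced: no colour class can be made independent by complementation (each induced class lies in $\{2K_2, \pentagon\text{-free } C_4\}$, neither independent nor complete), ruling out blow-ups and matching extensions, while the cross-connection between $R$ and $B$ is neither empty nor complete bipartite, ruling out a colour disjoint union.

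I expect essentially all the substantive mathematics to be carried by the cited theorems, so the only real obstacle is bookkeeping: making sure the several colour complementations used to normalise $G[R]$, $G[B]$, and the matching can be applied simultaneously, and that reducedness (an orbit invariant) and \highlyRegular{k}{}-regularity are preserved at each step, so that a reduction of the normalised graph genuinely certifies non-reducedness of the original. As in Table~\ref{tab:classifiaction}, the classification is understood up to colour complementation, with $\extendedHadamardGraph(\sylvester(2))$ taken as the representative of its complementation orbit.
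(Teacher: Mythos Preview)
Your proposal is correct and follows essentially the same route as the paper's proof, which simply cites the monochromatic classification together with Theorems~\ref{thm: 2col primitive graphs main}, \ref{thm: imprimitive graphs two colours main}, and~\ref{thm:high-regularity-of-hadamard-graphs}; you have merely unpacked the case analysis and the reducedness bookkeeping that the paper leaves implicit. Your added verification that $\extendedHadamardGraph(\sylvester(2))$ is itself reduced (each colour class induces $2K_2$ or its complement $C_4$, neither independent nor complete, and the cross bipartite part is $2$-regular hence never empty or complete) is a useful detail the paper omits; the phrase ``$C_5$-free $C_4$'' is presumably a slip for ``$C_4$''.
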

\begin{proof}
	This follows directly by combining the characterization of monochromatic $k$-HR graphs with  Theorems~\ref{thm: 2col primitive graphs main},~\ref{thm: imprimitive graphs two colours main}, and~\ref{thm:high-regularity-of-hadamard-graphs}.
\end{proof}

We can now give a characterization of reduced highly regular graphs.

\begin{theorem}\label{thm:k:reg:irreducible}
	Suppose~\highlyRegularnok{} $\in \{UH,TR\}$ and~$k \in \mathbb{N}_{\geq 5}$. An irreducible graph is~\highlyRegular{k}{} if and only if it is a monochromatic or bichromatic \highlyRegular{k}{} graph.
\end{theorem}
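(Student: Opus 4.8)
The backward direction is immediate: an irreducible graph that happens to be a monochromatic or bichromatic \highlyRegular{k}{} graph is in particular \highlyRegular{k}{}. The content is the forward direction, which I would establish in the contrapositive form: \emph{every} \highlyRegular{k}{} graph with $k\ge 5$ and at least three colour classes is reducible. So let $(G,\chi)$ be \highlyRegular{k}{} with colour classes $C_1,\dots,C_m$, where $m\ge 3$. By Lemma~\ref{lem: safeOperations}\eqref{itm: induced-graph-has-property-pi}, every induced subgraph on a union of colour classes is again \highlyRegular{k}{}; in particular each $G[C_i]$ is a monochromatic \tupleRegular{5} graph and hence ultrahomogeneous by the classification summarised in Table~\ref{tab:classifiaction}. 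Thus $G[C_i]$ is an $s_iK_{t_i}$, the complement of such a graph, an $\rookGraph{3}$, or a $\pentagon$.

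First I would peel off a colour class as a colour disjoint union whenever this is possible. If some $G[C_i]$ is primitive, that is an $\rookGraph{3}$ or a $\pentagon$, then Theorem~\ref{thm: 2col primitive graphs main}, applied to $C_i$ together with each of the other classes, shows that $C_i$ is homogeneously connected to every other class; complementing $C_i$ against each class it is joined to isolates $C_i$ and exhibits $(G,\chi)$ as a colour disjoint union. Hence, after complementing within each class, we may assume that every $G[C_i]$ is a disjoint union of cliques, and I would then classify each pair of classes by Theorem~\ref{thm: imprimitive graphs two colours main}: it is homogeneously connected, matching-connected, a homogeneous blow-up, or an extended Hadamard graph. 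If all pairs are homogeneously connected, complementing one class against every class it is joined to again yields a colour disjoint union. If some pair $(C_i,C_j)$ is an extended Hadamard graph, then Theorem~\ref{thm:hadamard:is:discon}, applied to $C_i$, $C_j$ and each remaining class $C_\ell$ (which induces a disjoint union of cliques), shows that $C_\ell$ is homogeneously connected to both $C_i$ and $C_j$; complementing every such $C_\ell$ away from $C_i$ and $C_j$ separates $C_i\cup C_j$ from the rest, so $(G,\chi)$ is once more a colour disjoint union. In each of these cases $(G,\chi)$ is reducible.

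It remains to treat the case in which every pair is homogeneously connected, matching-connected, or a blow-up, with at least one pair of the last two kinds. Suppose $(C_i,C_j)$ is a blow-up of a fat class $C_i\cong s_iK_{t_i}$ with $s_i,t_i\ge 2$, so the vertices of each $t_i$-clique of $C_i$ are true twins inside $C_i\cup C_j$. For any third class $C_\ell$, the pair $(C_i,C_\ell)$ is not matching-connected (as $C_i$ is neither edgeless nor complete), and it is not an extended Hadamard graph: such a pair would, by Theorem~\ref{thm:hadamard:is:discon} applied with third class $C_j$, force $C_j$ to be homogeneously connected to $C_i$, contradicting that $(C_i,C_j)$ is a blow-up. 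Hence $(C_i,C_\ell)$ is homogeneously connected, a blow-up of $C_i$, or a blow-up of $C_\ell$. In the first two cases the twins of a $C_i$-clique plainly agree on $C_\ell$. In the third, contracting the $C_\ell$-cliques turns the \highlyRegular{k}{} graph $(G,\chi)[C_i\cup C_\ell]$ into a bichromatic \highlyRegular{k}{} graph in which $C_\ell$ is edgeless; there Theorem~\ref{thm: imprimitive graphs two colours main} leaves only the homogeneously connected and blow-up-of-$C_i$ options (a blow-up of $C_\ell$ and an extended Hadamard pair are impossible because $C_\ell$ is now edgeless, and matching is impossible because $C_i$ is fat), and in both the $C_i$-twins agree on the contracted class, hence on $C_\ell$ itself. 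Thus the $C_i$-clique vertices are global true twins, $(G,\chi)$ is a homogeneous blow-up of $C_i$, and so reducible.

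Finally, if the distinguished pair $(C_i,C_j)$ is matching-connected, then after complementing within the classes $C_i$ and $C_j$ are edgeless and joined by a perfect matching $M$, and the plan is to realise $(G,\chi)$ as the homogeneous matching extension of $(G,\chi)[V(G)\setminus C_i]$, a reduction by Lemma~\ref{lem:matching}. Since the unique $C_j$-neighbour of a vertex $r\in C_i$ is $M(r)$, \tupleRegular{2} gives that $\lambda^{C_j}(\{r,y\}) = [\,M(r)\sim y\,]$ is constant along the edges joining $C_i$ to any third class $C_\ell$; combining this with the symmetric identity obtained by interchanging $C_i$ and $C_j$ should force $N^{C_\ell}(r)=N^{C_\ell}(M(r))$ for every matched pair, so that the colour-swapping bijection induced by $M$ is an automorphism. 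The main obstacle throughout is exactly this lifting of a two-colour blow-up, matching, or Hadamard structure to a global reduction: one must exclude the possibility that a third colour sees the two twinned (or matched) classes asymmetrically. The sharpest instance, an extended Hadamard pair interacting with a third colour, is precisely what Theorem~\ref{thm:hadamard:is:discon} dispatches; the blow-up instance is settled by the contraction argument above, and the matching instance is approached through the \tupleRegular{2} identities. Verifying that no such asymmetry survives, so that in each remaining case a global blow-up, matching extension, or disjoint union is forced, is the crux of the argument.
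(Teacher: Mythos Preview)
Your approach mirrors the paper's but is far more explicit: the paper asserts in one sentence that under irreducibility every non-homogeneously-connected bichromatic pair must induce an extended Hadamard graph, leaving the lifting of a local blow-up or matching to a global reduction entirely implicit. You have correctly identified this lifting as the crux. Your blow-up case works but is over-elaborate: once $C_i$ is fat and Hadamard pairs have been excluded, applying Theorem~\ref{thm: imprimitive graphs two colours main} to $(C_i,C_\ell)$ with the fat class $C_i$ playing the role of $R$ forces case~(i) or case~(iii) with the blow-up at $C_i$ (case~(ii) fails since $C_i$ is neither edgeless nor complete, and case~(iv) is already excluded). Thus the $C_i$-clique vertices are twins in $C_i\cup C_\ell$ directly, and your ``blow-up of $C_\ell$'' sub-case with its contraction detour never arises.

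The genuine gap is in your matching case, as you acknowledge. The \tupleRegular{2} identity shows that $[M(r)\sim y]$ is a fixed function $f$ of $[r\sim y]$; you want $f$ to be the identity, but $f$ could equally well be negation or a constant. If $f$ is negation, colour-complement between $C_j$ and $C_\ell$ to convert it to the identity; if $f$ is constant then the symmetric identity forces $C_i$ and $C_\ell$ to be homogeneously connected as well, and a further complementation makes the two connections agree. Carrying this out for each $C_\ell$ separately makes every matched pair agree on every third class, so the complemented graph is a homogeneous matching extension. Since the definition of ``reduced'' permits arbitrary colour complementations before testing for a matching extension, this single observation closes your argument.
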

\begin{proof}
	Suppose that~$(G, \chi)$ is~\highlyRegular{k}{} and irreducible. By Lemma~\ref{lem: safeOperations}.\eqref{itm: induced-graph-has-property-pi}, each colour class of~$(G, \chi)$ induces a~\highlyRegular{k}{} graph. 
	If there is a colour class~$R$ in~$(G, \chi)$ that induces a primitive strongly regular graph, then by Lemma~\ref{lem:cricle-components}, Corollary~\ref{lem:rook-graph}, or Lemma~\ref{lem: Sch HS McL do not allow for regular partition} the colour class~$R$ is homogeneously connected to all other colour classes. Since~$(G, \chi)$ is irreducible this implies that~$(G, \chi)$ is monochromatic.
	
	From Theorem~\ref{thm: imprimitive graphs two colours main} it follows that every pair of colour classes that is not homogeneously connected induces an extended Hadamard graph. 
	If~$(G, \chi)$ is not monochromatic, then there are two colour classes that are not homogeneously connected. But then Theorem~\ref{thm:hadamard:is:discon} implies that the graph is bichromatic.
\end{proof}

We have finally assembled all the tools required to state our classification.

\begin{corollary}Suppose~\highlyRegularnok{} $\in \{UH,TR\}$ and~$k \in \mathbb{N}_{\geq 5}$.
	A graph is \highlyRegular{k}{} if and only if it can be obtained from irreducible monochromatic and bichromatic \highlyRegular{k}{} graphs by an arbitrary combination of the following operations:
	\begin{enumerate}
		\item colour disjoint union,
		\item homogeneous matching extension,
		\item homogeneous blow-up, and
		\item colour complementation.
	\end{enumerate}
\end{corollary}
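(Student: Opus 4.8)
The plan is to prove the two implications separately, with the substantive half already carried by the earlier structural results. Sufficiency is immediate: each of the four operations preserves being \highlyRegular{k}{}, which is precisely the content of the lemmas on colour complementation, homogeneous blow-up, homogeneous matching extension (Lemma~\ref{lem:matching}), and colour disjoint union. Since the irreducible monochromatic and bichromatic graphs we build from are \highlyRegular{k}{} by hypothesis, any graph assembled from them by an arbitrary combination of the four operations is again \highlyRegular{k}{}.

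For necessity I would induct on $|V(G)|$. If the \highlyRegular{k}{} graph $(G, \chi)$ is reduced, then Theorem~\ref{thm:k:reg:irreducible} guarantees it is monochromatic or bichromatic, so it is already one of the permitted building blocks and no operations are needed. If $(G, \chi)$ is not reduced, then by the definition of reduced there is a sequence of colour complementations carrying it to a graph $(G^*, \chi^*)$ that is a colour disjoint union, a homogeneous blow-up, or a homogeneous matching extension; by the colour complementation lemma $(G^*, \chi^*)$ is still \highlyRegular{k}{}. In each of the three cases the relevant operation lemma shows that the source graph(s) of the operation are \highlyRegular{k}{} and strictly smaller: a disjoint union splits into two proper pieces, a blow-up uses factor $t \geq 2$, and a matching extension doubles an independent colour class. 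The induction hypothesis then writes each source as a combination of permitted building blocks; reapplying the operation reconstructs $(G^*, \chi^*)$, and reversing the colour-complementation sequence (each step being itself a colour complementation) recovers $(G, \chi)$.

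The main obstacle is not located in this corollary but in Theorem~\ref{thm:k:reg:irreducible} together with the structural results of Sections~\ref{sec:finite-graphs-2colours} and~\ref{sec:finite-graphs-multiple-colours} on which it rests; the corollary itself is a bookkeeping induction. The only two points needing care are confirming that the induction terminates --- colour complementation alone preserves the vertex count, but in the reducible case it is always followed by a genuinely size-reducing operation --- and handling the \emph{sequence} of colour complementations in the definition of reduced rather than a single one, which is harmless because colour complementations compose and are self-inverse.
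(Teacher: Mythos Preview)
Your proof is correct and is precisely the natural argument the paper leaves implicit (the corollary is stated there without proof). The induction on $|V(G)|$ together with Theorem~\ref{thm:k:reg:irreducible} for the reduced case and the four ``if and only if'' preservation lemmas for the reducible case is exactly what is needed, and you have correctly identified the two subtleties (termination via a strict size drop after the complementation phase, and handling a sequence rather than a single complementation).
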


If it turns out that the only monochromatic~$4$-tuple regular graphs that are not~$5$-tuple regular are indeed the McLaughlin graph and its complement then the Theorem and the Corollary also hold for~$k=4$.

When~$k=3$, we can also make a similar statement for graphs whose colour classes induce imprimitive graphs.

\begin{theorem}
	Suppose~\highlyRegularnok{} $\in \{UH,TR\}$. A reduced graph whose colour classes induce imprimitive graphs is~\highlyRegular{3}{} if and only if it is a 1-vertex graph or an extended Hadamard graph.
\end{theorem}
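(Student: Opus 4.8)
The plan is to deduce the statement from the bichromatic classification (Theorem~\ref{thm: imprimitive graphs two colours main}), the three-colour degeneracy result (Theorem~\ref{thm:hadamard:is:discon}), and the fact that extended Hadamard graphs are \tupleRegular{3} (Theorem~\ref{thm:high-regularity-of-hadamard-graphs}\eqref{itm: hadamard-graphs-are-3tu}). For the ``if'' direction, the one-vertex graph is trivially \highlyRegular{3}{}, and an extended Hadamard graph is \tupleRegular{3} and reduced, so it lies in the class under consideration. When \highlyRegularnok{} is read as \infUltraRegular{} one restricts the extended Hadamard graphs on the right-hand side to those that are \ultraRegular{3}, that is, to Sylvester matrices and the rank-$12$ matrix by Theorem~\ref{thm:high-regularity-of-hadamard-graphs}\eqref{itm: 3ultra-hadamard-graphs}; since each of these is in particular \tupleRegular{3}, the ``only if'' analysis below is unaffected.

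For the ``only if'' direction I would first apply colour complementation, which preserves both \highlyRegularnok{}-ness and reducedness, to arrange that every colour class induces a disjoint union of cliques $sK_t$ rather than its complement; this is possible because an imprimitive strongly regular graph is an $sK_t$ or the complement of one. If $(G,\colouring)$ is monochromatic it is an $sK_t$, and it suffices to observe that $sK_t$ with $t\geq 2$ is a homogeneous blow-up of $sK_1$, while $sK_1$ with $s\geq 2$ becomes the blow-up $K_s$ of $K_1$ after complementation; hence the only reduced monochromatic graph is $K_1$.

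Now suppose $(G,\colouring)$ has at least two colours. Reducedness forbids $(G,\colouring)$ and all of its colour complementations from being a colour disjoint union, so the auxiliary graph whose vertices are the colour classes and whose edges join the pairs that are \emph{not} homogeneously connected is connected; in particular some pair $R,B$ is not homogeneously connected. By Lemma~\ref{lem: safeOperations}\eqref{itm: induced-graph-has-property-pi} the subgraph $(G,\colouring)[R\cup B]$ is \tupleRegular{3}, so Theorem~\ref{thm: imprimitive graphs two colours main} makes it matching-connected, a homogeneous blow-up, or an extended Hadamard graph. The heart of the argument is to exclude the first two possibilities by lifting them to a reduction of the whole graph. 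In the blow-up case the pair forces $s_R,t_R\geq 2$, so $R$ is neither edgeless nor complete; then for every other colour class $C$ the pair $(R,C)$ is homogeneously connected or a blow-up or an extended Hadamard graph, and the Hadamard option is ruled out because Theorem~\ref{thm:hadamard:is:discon} applied to $R\cup C\cup B$ would force $R$ and $B$ to be homogeneously connected. Thus every maximal red clique has a uniform neighbourhood in every other colour, exhibiting $(G,\colouring)$ as a global homogeneous blow-up at $R$, a contradiction. In the matching case $R$ and $B$ are edgeless and matched; here a non-homogeneously attached colour $C$ must itself be matched to $R$ (since $R$ is edgeless, Theorem~\ref{thm: imprimitive graphs two colours main} leaves no other option), while homogeneously attached colours can be complemented away, so that $(G,\colouring)$ is again reducible, either as a homogeneous matching extension or as a colour disjoint union. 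Therefore $R,B$ induces an extended Hadamard graph, and Theorem~\ref{thm:hadamard:is:discon} forces every other colour class to be homogeneously connected to both $R$ and $B$. Consequently $\{R,B\}$ is a connected component of the auxiliary graph, and since that graph is connected there are no further colours; thus $(G,\colouring)$ is bichromatic, and the bichromatic case of Theorem~\ref{thm: imprimitive graphs two colours main} together with reducedness identifies it as an extended Hadamard graph.

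I expect the main obstacle to be exactly this lifting of the bichromatic alternatives to the whole graph. The blow-up case is controlled cleanly by using Theorem~\ref{thm:hadamard:is:discon} to forbid $R$ from having a Hadamard partner, but the matching case is more delicate, because \tupleRegular{3} alone does not force two matched vertices to have identical neighbourhoods in the remaining colours; there one has to case on how each additional colour attaches --- homogeneously, so that it can be split off towards a colour disjoint union, or by a further matching, so that a global matching extension appears --- and verify that these cases are exhaustive and mutually compatible.
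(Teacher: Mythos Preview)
Your proposal is essentially the same argument as the paper's (the paper simply says ``analogous to Theorem~\ref{thm:k:reg:irreducible}''), and you have in fact written out more of the lifting argument than the paper does. The blow-up case and the final Hadamard case are handled correctly.

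There is one genuine oversight in your matching case. You assert that once $R$ and $B$ are edgeless and matching-connected, ``a non-homogeneously attached colour $C$ must itself be matched to $R$ (since $R$ is edgeless, Theorem~\ref{thm: imprimitive graphs two colours main} leaves no other option)''. This is not true as stated: Theorem~\ref{thm: imprimitive graphs two colours main} only forces option~\eqref{itm: match} when \emph{both} classes are edgeless or complete. If $G[C]\cong s_CK_{t_C}$ with $s_C,t_C\geq 2$, then the pair $(R,C)$ can fall into option~\eqref{itm: hom blowup}, realised as a blow-up at~$C$ (not at~$R$), and nothing you have said about~$R$ rules this out. The fix is simply to run your blow-up argument globally first: if \emph{any} colour class $C$ is thick ($s_C,t_C\geq 2$), then your own reasoning for the blow-up case, applied with $C$ in the role of~$R$, already shows the whole graph is a blow-up at~$C$ or a bichromatic extended Hadamard graph. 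Only after this may you assume every class is edgeless (up to complementation), and then your dichotomy for~$C$ becomes valid. With this reordering the argument goes through; you then still need the $2$-tuple regularity observation that if $R$--$B$ and $R$--$C$ are matchings, $B$--$C$ cannot be homogeneously connected, and that the three matchings are forced to be compatible (possibly after one further complementation when $|C|=2$), so that the matching extension is genuinely global.
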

\begin{proof}
	The proof is analogous to the proof of Theorem~\ref{thm:k:reg:irreducible}.
\end{proof}
	
	\section{Conclusion and further research}\label{sec:future:work}
	
	\begin{table}[h!]\renewcommand{\arraystretch}{1.2}
			\centering
			\begin{tabular}{l|c|c|c|c}
				&\!\tupleRegular{3}\!&\!\ultraRegular{3}\!&\!\tupleRegular{4}\!&\!\infUltraRegular{}\!\\
				\cline{1-5}
				\multirow{2}{*}{bichromatic Wagner graph $\extendedHadamardGraph(\sylvester(2))$} &\multirow{2}{*}{$\checkmark$}&\multirow{2}{*}{$\checkmark$}&\multirow{2}{*}{$\checkmark$}&\multirow{2}{*}{$\checkmark$}\\
				 &&&&\\
				 \cline{1-5}
				 				\multirow{2}{*}{possibly graphs made from new monochromatic \tupleRegular{4} graphs} &\multirow{2}{*}{$\checkmark$}&\multirow{2}{*}{$\checkmark$}&\multirow{2}{*}{$\checkmark$}&\\
				 				 &&&&\\
				\cline{1-5}
				
				ext.~Had.~graph of rank 12~Hadamard matrix~$\extendedHadamardGraph(\hadTwelve)$				
				 &\multirow{3}{*}{$\checkmark$}&\multirow{3}{*}{$\checkmark$}&&\\
				ext.~Had.~graph of Sylvester matrix $\extendedHadamardGraph(\sylvester(2^s))$ ($s\geq 2$)&&&&\\
				possibly other graphs with a primitive colour class&&&&\\
				\cline{1-5}
				\multirow{1}{*}{ext.~Had.~graph  $\extendedHadamardGraph(H)$ (not Sylvester matrix and~$H\ncong \hadTwelve$)}		&\multirow{2}{*}{$\checkmark$}&&&\\
				possibly other graphs with a primitive colour class&&&&
				
			\end{tabular}
			\caption{Classification of finite, bichromatic, reduced \tupleRegular{k} and \ultraRegular{\ell} graphs for~$k\geq 5,\ell\geq 4$ and
			classification for finite, bichromatic, reduced \tupleRegular{3}, \tupleRegular{4} and \ultraRegular{3} graphs with both colour classes inducing imprimitive graphs, the McLaughlin or its complement.}
			\label{tab:classifiaction;bichrom}
		\end{table}
		
	In this paper we classified, for~$k\geq 4$ and~$\ell \geq 5$, the~$k$-ultrahomogeneous and the~$\ell$-tuple regular finite graphs. Via several regularity-preserving operations, the classification boils down to analysing bichromatic graphs.
	A two-coloured version of the Wagner graph plays a prominent role in the characterization. 
	For~$k =3$ we showed a classification when the colour classes do not induce primitive graphs. The results are also summarized in Table~\ref{tab:classifiaction;bichrom}.
	
	An open problem that suggests itself is to extend the classification to~$3$-ultrahomogeneous vertex coloured graphs. For this, two things must be done. First one has to analyse whether primitive monochromatic colour classes can be non-homogeneously connected to other colour classes. One cannot rule this out in a straightforward manner as was done in this paper since some of the graphs of interest can actually be partitioned into two parts inducing strongly regular graphs. For example, if $H$ is isomorphic to the Higman-Sims graph and $(V_1,V_2)$ is a partition of $V(H)$ into two strongly regular graphs, then $H[V_1]$ and $H[V_2]$ are both isomorphic to
	the \emph{Hoffman-Singleton graph}
	a strongly regular graph with parameters
	$(50, 7, 0, 1)$. So indeed, a partition into strongly regular graphs is possible.
	If non-trivial bichromatic connections are possible one would have to investigate whether irreducible trichromatic graphs exist.
	
	However, in light of our algorithmic interest in local to global approaches, hinted at in the introduction, a non-obvious avenue of further research might be as follows. Note that both the concepts~$k$-ultrahomogeneity and~$k$-tuple regularity in some way require that among the induced subgraphs of a particular isomorphism type there is only one kind. Indeed, for ultrahomogeneity there should be at most one kind under the orbits of the automorphism group and for tuple regularity there should be at most one kind when considering the multiset of 1-vertex extensions.
	Can we classify or even quantify what happens when we allow several kinds but not too many, say a bounded number of them?
	
	\section*{Acknowledgements}
	We thank Alexander Gavrilyuk for pointing out to us that the classification of monochromatic 4-tuple regulars graphs actually remains an open problem to date and providing pointers to various references.

	\bibliography{bib/inftyTransitiveGraphs}
	
    \newpage
    \appendix
    \noindent {\huge{\textbf{Appendix}}}
    
    \bigskip
    
    \section{Partitioning the Schl\"afli and the McLaughlin graph}
    \label{sec:appendix}
    
    The appendix details the calculations that have to be performed for the proof of Lemma~\ref{lem: Sch HS McL do not allow for regular partition} by hand. We include them for the convenience and overview for reviewers, but do not want to suggest that the tables should be published in the journal.
    We also checked the parameters using the computer. Here we used a brute force algorithm that not require any knowledge on possible parameter sets at all.
    
    Let $(n,d,\lambda,\mu)$ be the parameter set of the Schl\"afli graph (respectively McLaughlin graph).
    As explained in Lemma~\ref{lem: Sch HS McL do not allow for regular partition}, the aim of the calculation is to check whether these two graphs can be partitioned into two strongly regular graphs $H_1$ and $H_2$ with the parameter sets $(n_1,d_1,\lambda_1,\mu_1)$ and $(n_2,d_2,\lambda_2,\mu_2)$, respectively.
    Iterating over Brouwer's list of parameters of strongly regular graphs~\cite{BrouwerParamter:online}, we discard all $(n_1,d_1,\lambda_1,\mu_1)$ which do not satisfy following conditions:
    $n_1 \leq \lfloor\nicefrac{n}{2}\rfloor$, $d_1 \leq d$, $\lambda_1 \leq \lambda$, and  $\mu_1 \leq \mu$.
    For the remaining possible parameter sets, we stepwise compute the parameter set $(n_2,d_2,\lambda_2,\mu_2)$ according to Lemma~\ref{lem: partitionSrgIntoTwoSrgs} in search for a violation. 
    As the tables below display, in each of the cases at least one of the parameters $d_2$, $\lambda_2$, or $\mu_2$ is not a natural number.
    Afterwards, we repeat this process for all parameter sets of the imprimitive strongly regular graphs $sK_t$ and $\overline{tK_{s}}$. Here  we can bound $s$ by the independence number of the graph and $t$ by its clique number.

    \paragraph*{Schl\"afli Graph}
    The parameter set of the Schl\"alfli graph is $(27, 16, 10, 8)$, the clique number is $6$, and the indpendence number is $3$.
    Using Brouwer's parameter list, 27 of 27 parameter combinations were pruned.
    
    \begin{longtable}{c|cccc|cccc|l}
    	& $n_1$ & $d_1$ & $\lambda_1$ & $\mu_1$ & $n_2$ & $d_2$ & $\lambda_2$ & $\mu_2$ & Reason \\ \hline
    	& 5 & 2 & 0 & 1 & 22 & 12.818 &   &   & $d_{2} \notin \Nat$ \\
    	& 9 & 4 & 1 & 2 & 18 & 10 & 5.800 &   & $\lambda_{2} \notin \Nat$ \\
    	& 10 & 3 & 0 & 1 & 17 & 8.353 &   &   & $d_{2} \notin \Nat$ \\
    	& 10 & 6 & 3 & 4 & 17 & 10.118 &   &   & $d_{2} \notin \Nat$ \\
    	& 13 & 6 & 2 & 3 & 14 & 6.714 &   &   & $d_{2} \notin \Nat$ \\
    	\hline$K_1$ & 1 & 0 & - & - & 26 & 15.385 &   &   & $d_{2} \notin \Nat$ \\
    	$K_{2}$ & 2 & 1 & 0 & - & 25 & 14.800 &   &   & $d_{2} \notin \Nat$ \\
    	$K_{3}$ & 3 & 2 & 1 & - & 24 & 14.250 &   &   & $d_{2} \notin \Nat$ \\
    	$K_{4}$ & 4 & 3 & 2 & - & 23 & 13.739 &   &   & $d_{2} \notin \Nat$ \\
    	$K_{5}$ & 5 & 4 & 3 & - & 22 & 13.273 &   &   & $d_{2} \notin \Nat$ \\
    	$K_{6}$ & 6 & 5 & 4 & - & 21 & 12.857 &   &   & $d_{2} \notin \Nat$ \\
    	\hline$\overline{K_{2} }$ & 2 & 0 & - & 0 & 25 & 14.720 &   &   & $d_{2} \notin \Nat$ \\
    	$\overline{K_{3} }$ & 3 & 0 & - & 0 & 24 & 14 & 8.571 &   & $\lambda_{2} \notin \Nat$ \\
    	\hline$2K_{2}$ & 4 & 1 & 0 & 0 & 23 & 13.391 &   &   & $d_{2} \notin \Nat$ \\
    	$3K_{2}$ & 6 & 1 & 0 & 0 & 21 & 11.714 &   &   & $d_{2} \notin \Nat$ \\
    	$2K_{3}$ & 6 & 2 & 1 & 0 & 21 & 12 & 7.095 &   & $\lambda_{2} \notin \Nat$ \\
    	$3K_{3}$ & 9 & 2 & 1 & 0 & 18 & 9 & 3.222 &   & $\lambda_{2} \notin \Nat$ \\
    	$2K_{4}$ & 8 & 3 & 2 & 0 & 19 & 10.526 &   &   & $d_{2} \notin \Nat$ \\
    	$3K_{4}$ & 12 & 3 & 2 & 0 & 15 & 5.600 &   &   & $d_{2} \notin \Nat$ \\
    	$2K_{5}$ & 10 & 4 & 3 & 0 & 17 & 8.941 &   &   & $d_{2} \notin \Nat$ \\
    	$2K_{6}$ & 12 & 5 & 4 & 0 & 15 & 7.200 &   &   & $d_{2} \notin \Nat$ \\
    	\hline$\overline{2K_{2} }$ & 4 & 2 & 0 & 2 & 23 & 13.565 &   &   & $d_{2} \notin \Nat$ \\
    	$\overline{2K_{3} }$ & 6 & 3 & 0 & 3 & 21 & 12.286 &   &   & $d_{2} \notin \Nat$ \\
    	$\overline{3K_{2} }$ & 6 & 4 & 2 & 4 & 21 & 12.571 &   &   & $d_{2} \notin \Nat$ \\
    	$\overline{3K_{3} }$ & 9 & 6 & 3 & 6 & 18 & 11 & 7.364 &   & $\lambda_{2} \notin \Nat$ \\
    	$\overline{4K_{2} }$ & 8 & 6 & 4 & 6 & 19 & 11.789 &   &   & $d_{2} \notin \Nat$ \\
    	$\overline{5K_{2} }$ & 10 & 8 & 6 & 8 & 17 & 11.294 &   &   & $d_{2} \notin \Nat$ \\
    \end{longtable}

    \paragraph*{McLaughlin Graph}
    The parameter set of the Schl\"alfli graph is $(275, 112, 30, 56)$, the clique number is $5$, and the indpendence number is $22$.
    Using Brouwer's parameter list, 372 of 372 parameter combinations were pruned.
    
    \begin{longtable}{c|cccc|cccc|l}
    	& $n_1$ & $d_1$ & $\lambda_1$ & $\mu_1$ & $n_2$ & $d_2$ & $\lambda_2$ & $\mu_2$ & Reason \\ \hline
    	& 5 & 2 & 0 & 1 & 270 & 109.963 &   &   & $d_{2} \notin \Nat$ \\
    	& 9 & 4 & 1 & 2 & 266 & 108.346 &   &   & $d_{2} \notin \Nat$ \\
    	& 10 & 3 & 0 & 1 & 265 & 107.887 &   &   & $d_{2} \notin \Nat$ \\
    	& 10 & 6 & 3 & 4 & 265 & 108 & 28.945 &   & $\lambda_{2} \notin \Nat$ \\
    	& 13 & 6 & 2 & 3 & 262 & 106.740 &   &   & $d_{2} \notin \Nat$ \\
    	& 15 & 6 & 1 & 3 & 260 & 105.885 &   &   & $d_{2} \notin \Nat$ \\
    	& 15 & 8 & 4 & 4 & 260 & 106 & 28.415 &   & $\lambda_{2} \notin \Nat$ \\
    	& 16 & 5 & 0 & 2 & 259 & 105.390 &   &   & $d_{2} \notin \Nat$ \\
    	& 16 & 6 & 2 & 2 & 259 & 105.452 &   &   & $d_{2} \notin \Nat$ \\
    	& 16 & 9 & 4 & 6 & 259 & 105.637 &   &   & $d_{2} \notin \Nat$ \\
    	& 16 & 10 & 6 & 6 & 259 & 105.699 &   &   & $d_{2} \notin \Nat$ \\
    	& 17 & 8 & 3 & 4 & 258 & 105.147 &   &   & $d_{2} \notin \Nat$ \\
    	& 21 & 10 & 3 & 6 & 254 & 103.567 &   &   & $d_{2} \notin \Nat$ \\
    	& 21 & 10 & 4 & 5 & 254 & 103.567 &   &   & $d_{2} \notin \Nat$ \\
    	& 21 & 10 & 5 & 4 & 254 & 103.567 &   &   & $d_{2} \notin \Nat$ \\
    	& 25 & 8 & 3 & 2 & 250 & 101.600 &   &   & $d_{2} \notin \Nat$ \\
    	& 25 & 12 & 5 & 6 & 250 & 102 & 27.353 &   & $\lambda_{2} \notin \Nat$ \\
    	& 25 & 16 & 9 & 12 & 250 & 102.400 &   &   & $d_{2} \notin \Nat$ \\
    	& 26 & 10 & 3 & 4 & 249 & 101.349 &   &   & $d_{2} \notin \Nat$ \\
    	& 26 & 15 & 8 & 9 & 249 & 101.871 &   &   & $d_{2} \notin \Nat$ \\
    	& 27 & 10 & 1 & 5 & 248 & 100.895 &   &   & $d_{2} \notin \Nat$ \\
    	& 27 & 16 & 10 & 8 & 248 & 101.548 &   &   & $d_{2} \notin \Nat$ \\
    	& 28 & 9 & 0 & 4 & 247 & 100.324 &   &   & $d_{2} \notin \Nat$ \\
    	& 28 & 12 & 6 & 4 & 247 & 100.664 &   &   & $d_{2} \notin \Nat$ \\
    	& 28 & 15 & 6 & 10 & 247 & 101.004 &   &   & $d_{2} \notin \Nat$ \\
    	& 28 & 18 & 12 & 10 & 247 & 101.344 &   &   & $d_{2} \notin \Nat$ \\
    	& 29 & 14 & 6 & 7 & 246 & 100.447 &   &   & $d_{2} \notin \Nat$ \\
    	& 33 & 16 & 7 & 8 & 242 & 98.909 &   &   & $d_{2} \notin \Nat$ \\
    	& 35 & 16 & 6 & 8 & 240 & 98 & 26.286 &   & $\lambda_{2} \notin \Nat$ \\
    	& 35 & 18 & 9 & 9 & 240 & 98.292 &   &   & $d_{2} \notin \Nat$ \\
    	& 36 & 10 & 4 & 2 & 239 & 96.636 &   &   & $d_{2} \notin \Nat$ \\
    	& 36 & 14 & 4 & 6 & 239 & 97.238 &   &   & $d_{2} \notin \Nat$ \\
    	& 36 & 14 & 7 & 4 & 239 & 97.238 &   &   & $d_{2} \notin \Nat$ \\
    	& 36 & 15 & 6 & 6 & 239 & 97.389 &   &   & $d_{2} \notin \Nat$ \\
    	& 36 & 20 & 10 & 12 & 239 & 98.142 &   &   & $d_{2} \notin \Nat$ \\
    	& 36 & 21 & 10 & 15 & 239 & 98.293 &   &   & $d_{2} \notin \Nat$ \\
    	& 36 & 21 & 12 & 12 & 239 & 98.293 &   &   & $d_{2} \notin \Nat$ \\
    	& 36 & 25 & 16 & 20 & 239 & 98.895 &   &   & $d_{2} \notin \Nat$ \\
    	& 37 & 18 & 8 & 9 & 238 & 97.387 &   &   & $d_{2} \notin \Nat$ \\
    	& 40 & 12 & 2 & 4 & 235 & 94.979 &   &   & $d_{2} \notin \Nat$ \\
    	& 40 & 27 & 18 & 18 & 235 & 97.532 &   &   & $d_{2} \notin \Nat$ \\
    	& 41 & 20 & 9 & 10 & 234 & 95.880 &   &   & $d_{2} \notin \Nat$ \\
    	& 45 & 12 & 3 & 3 & 230 & 92.435 &   &   & $d_{2} \notin \Nat$ \\
    	& 45 & 16 & 8 & 4 & 230 & 93.217 &   &   & $d_{2} \notin \Nat$ \\
    	& 45 & 22 & 10 & 11 & 230 & 94.391 &   &   & $d_{2} \notin \Nat$ \\
    	& 45 & 28 & 15 & 21 & 230 & 95.565 &   &   & $d_{2} \notin \Nat$ \\
    	& 45 & 32 & 22 & 24 & 230 & 96.348 &   &   & $d_{2} \notin \Nat$ \\
    	& 49 & 12 & 5 & 2 & 226 & 90.319 &   &   & $d_{2} \notin \Nat$ \\
    	& 49 & 16 & 3 & 6 & 226 & 91.186 &   &   & $d_{2} \notin \Nat$ \\
    	& 49 & 18 & 7 & 6 & 226 & 91.619 &   &   & $d_{2} \notin \Nat$ \\
    	& 49 & 24 & 11 & 12 & 226 & 92.920 &   &   & $d_{2} \notin \Nat$ \\
    	& 49 & 30 & 17 & 20 & 226 & 94.221 &   &   & $d_{2} \notin \Nat$ \\
    	& 49 & 32 & 21 & 20 & 226 & 94.655 &   &   & $d_{2} \notin \Nat$ \\
    	& 49 & 36 & 25 & 30 & 226 & 95.522 &   &   & $d_{2} \notin \Nat$ \\
    	& 50 & 7 & 0 & 1 & 225 & 88.667 &   &   & $d_{2} \notin \Nat$ \\
    	& 50 & 21 & 4 & 12 & 225 & 91.778 &   &   & $d_{2} \notin \Nat$ \\
    	& 50 & 21 & 8 & 9 & 225 & 91.778 &   &   & $d_{2} \notin \Nat$ \\
    	& 50 & 28 & 15 & 16 & 225 & 93.333 &   &   & $d_{2} \notin \Nat$ \\
    	& 50 & 28 & 18 & 12 & 225 & 93.333 &   &   & $d_{2} \notin \Nat$ \\
    	& 53 & 26 & 12 & 13 & 222 & 91.468 &   &   & $d_{2} \notin \Nat$ \\
    	& 55 & 18 & 9 & 4 & 220 & 88.500 &   &   & $d_{2} \notin \Nat$ \\
    	& 55 & 36 & 21 & 28 & 220 & 93 & 24.742 &   & $\lambda_{2} \notin \Nat$ \\
    	& 56 & 10 & 0 & 2 & 219 & 85.918 &   &   & $d_{2} \notin \Nat$ \\
    	& 56 & 22 & 3 & 12 & 219 & 88.986 &   &   & $d_{2} \notin \Nat$ \\
    	& 56 & 33 & 22 & 15 & 219 & 91.799 &   &   & $d_{2} \notin \Nat$ \\
    	& 57 & 14 & 1 & 4 & 218 & 86.376 &   &   & $d_{2} \notin \Nat$ \\
    	& 57 & 24 & 11 & 9 & 218 & 88.991 &   &   & $d_{2} \notin \Nat$ \\
    	& 57 & 28 & 13 & 14 & 218 & 90.037 &   &   & $d_{2} \notin \Nat$ \\
    	& 57 & 32 & 16 & 20 & 218 & 91.083 &   &   & $d_{2} \notin \Nat$ \\
    	& 61 & 30 & 14 & 15 & 214 & 88.626 &   &   & $d_{2} \notin \Nat$ \\
    	& 63 & 22 & 1 & 11 & 212 & 85.255 &   &   & $d_{2} \notin \Nat$ \\
    	& 63 & 30 & 13 & 15 & 212 & 87.632 &   &   & $d_{2} \notin \Nat$ \\
    	& 63 & 32 & 16 & 16 & 212 & 88.226 &   &   & $d_{2} \notin \Nat$ \\
    	& 63 & 40 & 28 & 20 & 212 & 90.604 &   &   & $d_{2} \notin \Nat$ \\
    	& 64 & 14 & 6 & 2 & 211 & 82.275 &   &   & $d_{2} \notin \Nat$ \\
    	& 64 & 18 & 2 & 6 & 211 & 83.488 &   &   & $d_{2} \notin \Nat$ \\
    	& 64 & 21 & 0 & 10 & 211 & 84.398 &   &   & $d_{2} \notin \Nat$ \\
    	& 64 & 21 & 8 & 6 & 211 & 84.398 &   &   & $d_{2} \notin \Nat$ \\
    	& 64 & 27 & 10 & 12 & 211 & 86.218 &   &   & $d_{2} \notin \Nat$ \\
    	& 64 & 28 & 12 & 12 & 211 & 86.521 &   &   & $d_{2} \notin \Nat$ \\
    	& 64 & 30 & 18 & 10 & 211 & 87.128 &   &   & $d_{2} \notin \Nat$ \\
    	& 64 & 33 & 12 & 22 & 211 & 88.038 &   &   & $d_{2} \notin \Nat$ \\
    	& 64 & 35 & 18 & 20 & 211 & 88.645 &   &   & $d_{2} \notin \Nat$ \\
    	& 64 & 36 & 20 & 20 & 211 & 88.948 &   &   & $d_{2} \notin \Nat$ \\
    	& 64 & 42 & 26 & 30 & 211 & 90.768 &   &   & $d_{2} \notin \Nat$ \\
    	& 64 & 42 & 30 & 22 & 211 & 90.768 &   &   & $d_{2} \notin \Nat$ \\
    	& 65 & 32 & 15 & 16 & 210 & 87.238 &   &   & $d_{2} \notin \Nat$ \\
    	& 66 & 20 & 10 & 4 & 209 & 82.947 &   &   & $d_{2} \notin \Nat$ \\
    	& 66 & 45 & 28 & 36 & 209 & 90.842 &   &   & $d_{2} \notin \Nat$ \\
    	& 69 & 20 & 7 & 5 & 206 & 81.184 &   &   & $d_{2} \notin \Nat$ \\
    	& 69 & 34 & 16 & 17 & 206 & 85.874 &   &   & $d_{2} \notin \Nat$ \\
    	& 70 & 27 & 12 & 9 & 205 & 82.976 &   &   & $d_{2} \notin \Nat$ \\
    	& 70 & 42 & 23 & 28 & 205 & 88.098 &   &   & $d_{2} \notin \Nat$ \\
    	& 73 & 36 & 17 & 18 & 202 & 84.535 &   &   & $d_{2} \notin \Nat$ \\
    	& 75 & 32 & 10 & 16 & 200 & 82 & 21.951 &   & $\lambda_{2} \notin \Nat$ \\
    	& 75 & 42 & 25 & 21 & 200 & 85.750 &   &   & $d_{2} \notin \Nat$ \\
    	& 76 & 21 & 2 & 7 & 199 & 77.246 &   &   & $d_{2} \notin \Nat$ \\
    	& 76 & 30 & 8 & 14 & 199 & 80.683 &   &   & $d_{2} \notin \Nat$ \\
    	& 76 & 35 & 18 & 14 & 199 & 82.593 &   &   & $d_{2} \notin \Nat$ \\
    	& 76 & 40 & 18 & 24 & 199 & 84.503 &   &   & $d_{2} \notin \Nat$ \\
    	& 76 & 45 & 28 & 24 & 199 & 86.412 &   &   & $d_{2} \notin \Nat$ \\
    	& 77 & 16 & 0 & 4 & 198 & 74.667 &   &   & $d_{2} \notin \Nat$ \\
    	& 77 & 38 & 18 & 19 & 198 & 83.222 &   &   & $d_{2} \notin \Nat$ \\
    	& 78 & 22 & 11 & 4 & 197 & 76.365 &   &   & $d_{2} \notin \Nat$ \\
    	& 81 & 16 & 7 & 2 & 194 & 71.918 &   &   & $d_{2} \notin \Nat$ \\
    	& 81 & 20 & 1 & 6 & 194 & 73.588 &   &   & $d_{2} \notin \Nat$ \\
    	& 81 & 24 & 9 & 6 & 194 & 75.258 &   &   & $d_{2} \notin \Nat$ \\
    	& 81 & 30 & 9 & 12 & 194 & 77.763 &   &   & $d_{2} \notin \Nat$ \\
    	& 81 & 32 & 13 & 12 & 194 & 78.598 &   &   & $d_{2} \notin \Nat$ \\
    	& 81 & 40 & 13 & 26 & 194 & 81.938 &   &   & $d_{2} \notin \Nat$ \\
    	& 81 & 40 & 19 & 20 & 194 & 81.938 &   &   & $d_{2} \notin \Nat$ \\
    	& 81 & 40 & 25 & 14 & 194 & 81.938 &   &   & $d_{2} \notin \Nat$ \\
    	& 81 & 48 & 27 & 30 & 194 & 85.278 &   &   & $d_{2} \notin \Nat$ \\
    	& 82 & 36 & 15 & 16 & 193 & 79.710 &   &   & $d_{2} \notin \Nat$ \\
    	& 82 & 45 & 24 & 25 & 193 & 83.534 &   &   & $d_{2} \notin \Nat$ \\
    	& 85 & 14 & 3 & 2 & 190 & 68.158 &   &   & $d_{2} \notin \Nat$ \\
    	& 85 & 20 & 3 & 5 & 190 & 70.842 &   &   & $d_{2} \notin \Nat$ \\
    	& 85 & 30 & 11 & 10 & 190 & 75.316 &   &   & $d_{2} \notin \Nat$ \\
    	& 85 & 42 & 20 & 21 & 190 & 80.684 &   &   & $d_{2} \notin \Nat$ \\
    	& 88 & 27 & 6 & 9 & 187 & 72 & 17.569 &   & $\lambda_{2} \notin \Nat$ \\
    	& 89 & 44 & 21 & 22 & 186 & 79.462 &   &   & $d_{2} \notin \Nat$ \\
    	& 91 & 24 & 12 & 4 & 184 & 68.478 &   &   & $d_{2} \notin \Nat$ \\
    	& 93 & 46 & 22 & 23 & 182 & 78.275 &   &   & $d_{2} \notin \Nat$ \\
    	& 95 & 40 & 12 & 20 & 180 & 74 & 19.730 &   & $\lambda_{2} \notin \Nat$ \\
    	& 96 & 19 & 2 & 4 & 179 & 62.123 &   &   & $d_{2} \notin \Nat$ \\
    	& 96 & 20 & 4 & 4 & 179 & 62.659 &   &   & $d_{2} \notin \Nat$ \\
    	& 96 & 35 & 10 & 14 & 179 & 70.704 &   &   & $d_{2} \notin \Nat$ \\
    	& 96 & 38 & 10 & 18 & 179 & 72.313 &   &   & $d_{2} \notin \Nat$ \\
    	& 96 & 45 & 24 & 18 & 179 & 76.067 &   &   & $d_{2} \notin \Nat$ \\
    	& 96 & 50 & 22 & 30 & 179 & 78.749 &   &   & $d_{2} \notin \Nat$ \\
    	& 97 & 48 & 23 & 24 & 178 & 77.124 &   &   & $d_{2} \notin \Nat$ \\
    	& 99 & 14 & 1 & 2 & 176 & 56.875 &   &   & $d_{2} \notin \Nat$ \\
    	& 99 & 42 & 21 & 15 & 176 & 72.625 &   &   & $d_{2} \notin \Nat$ \\
    	& 99 & 48 & 22 & 24 & 176 & 76 & 18.632 &   & $\lambda_{2} \notin \Nat$ \\
    	& 99 & 50 & 25 & 25 & 176 & 77.125 &   &   & $d_{2} \notin \Nat$ \\
    	& 99 & 56 & 28 & 36 & 176 & 80.500 &   &   & $d_{2} \notin \Nat$ \\
    	& 100 & 18 & 8 & 2 & 175 & 58.286 &   &   & $d_{2} \notin \Nat$ \\
    	& 100 & 22 & 0 & 6 & 175 & 60.571 &   &   & $d_{2} \notin \Nat$ \\
    	& 100 & 27 & 10 & 6 & 175 & 63.429 &   &   & $d_{2} \notin \Nat$ \\
    	& 100 & 33 & 8 & 12 & 175 & 66.857 &   &   & $d_{2} \notin \Nat$ \\
    	& 100 & 33 & 14 & 9 & 175 & 66.857 &   &   & $d_{2} \notin \Nat$ \\
    	& 100 & 33 & 18 & 7 & 175 & 66.857 &   &   & $d_{2} \notin \Nat$ \\
    	& 100 & 36 & 14 & 12 & 175 & 68.571 &   &   & $d_{2} \notin \Nat$ \\
    	& 100 & 44 & 18 & 20 & 175 & 73.143 &   &   & $d_{2} \notin \Nat$ \\
    	& 100 & 45 & 20 & 20 & 175 & 73.714 &   &   & $d_{2} \notin \Nat$ \\
    	& 100 & 54 & 28 & 30 & 175 & 78.857 &   &   & $d_{2} \notin \Nat$ \\
    	& 100 & 55 & 30 & 30 & 175 & 79.429 &   &   & $d_{2} \notin \Nat$ \\
    	& 101 & 50 & 24 & 25 & 174 & 76.011 &   &   & $d_{2} \notin \Nat$ \\
    	& 105 & 26 & 13 & 4 & 170 & 58.882 &   &   & $d_{2} \notin \Nat$ \\
    	& 105 & 32 & 4 & 12 & 170 & 62.588 &   &   & $d_{2} \notin \Nat$ \\
    	& 105 & 40 & 15 & 15 & 170 & 67.529 &   &   & $d_{2} \notin \Nat$ \\
    	& 105 & 52 & 21 & 30 & 170 & 74.941 &   &   & $d_{2} \notin \Nat$ \\
    	& 105 & 52 & 25 & 26 & 170 & 74.941 &   &   & $d_{2} \notin \Nat$ \\
    	& 105 & 52 & 29 & 22 & 170 & 74.941 &   &   & $d_{2} \notin \Nat$ \\
    	& 109 & 54 & 26 & 27 & 166 & 73.916 &   &   & $d_{2} \notin \Nat$ \\
    	& 111 & 30 & 5 & 9 & 164 & 56.500 &   &   & $d_{2} \notin \Nat$ \\
    	& 111 & 44 & 19 & 16 & 164 & 65.976 &   &   & $d_{2} \notin \Nat$ \\
    	& 112 & 30 & 2 & 10 & 163 & 55.656 &   &   & $d_{2} \notin \Nat$ \\
    	& 112 & 36 & 10 & 12 & 163 & 59.779 &   &   & $d_{2} \notin \Nat$ \\
    	& 113 & 56 & 27 & 28 & 162 & 72.938 &   &   & $d_{2} \notin \Nat$ \\
    	& 115 & 18 & 1 & 3 & 160 & 44.438 &   &   & $d_{2} \notin \Nat$ \\
    	& 117 & 36 & 15 & 9 & 158 & 55.722 &   &   & $d_{2} \notin \Nat$ \\
    	& 117 & 58 & 28 & 29 & 158 & 72.013 &   &   & $d_{2} \notin \Nat$ \\
    	& 119 & 54 & 21 & 27 & 156 & 67.756 &   &   & $d_{2} \notin \Nat$ \\
    	& 120 & 28 & 14 & 4 & 155 & 46.968 &   &   & $d_{2} \notin \Nat$ \\
    	& 120 & 34 & 8 & 10 & 155 & 51.613 &   &   & $d_{2} \notin \Nat$ \\
    	& 120 & 35 & 10 & 10 & 155 & 52.387 &   &   & $d_{2} \notin \Nat$ \\
    	& 120 & 42 & 8 & 18 & 155 & 57.806 &   &   & $d_{2} \notin \Nat$ \\
    	& 120 & 51 & 18 & 24 & 155 & 64.774 &   &   & $d_{2} \notin \Nat$ \\
    	& 120 & 56 & 28 & 24 & 155 & 68.645 &   &   & $d_{2} \notin \Nat$ \\
    	& 120 & 63 & 30 & 36 & 155 & 74.065 &   &   & $d_{2} \notin \Nat$ \\
    	& 121 & 20 & 9 & 2 & 154 & 39.714 &   &   & $d_{2} \notin \Nat$ \\
    	& 121 & 30 & 11 & 6 & 154 & 47.571 &   &   & $d_{2} \notin \Nat$ \\
    	& 121 & 36 & 7 & 12 & 154 & 52.286 &   &   & $d_{2} \notin \Nat$ \\
    	& 121 & 40 & 15 & 12 & 154 & 55.429 &   &   & $d_{2} \notin \Nat$ \\
    	& 121 & 48 & 17 & 20 & 154 & 61.714 &   &   & $d_{2} \notin \Nat$ \\
    	& 121 & 50 & 21 & 20 & 154 & 63.286 &   &   & $d_{2} \notin \Nat$ \\
    	& 121 & 56 & 15 & 35 & 154 & 68 & 20.294 &   & $\lambda_{2} \notin \Nat$ \\
    	& 121 & 60 & 29 & 30 & 154 & 71.143 &   &   & $d_{2} \notin \Nat$ \\
    	& 122 & 55 & 24 & 25 & 153 & 66.549 &   &   & $d_{2} \notin \Nat$ \\
    	& 125 & 28 & 3 & 7 & 150 & 42 & -5 &   & $\lambda_{2} \notin \Nat$ \\
    	& 125 & 48 & 28 & 12 & 150 & 58.667 &   &   & $d_{2} \notin \Nat$ \\
    	& 125 & 52 & 15 & 26 & 150 & 62 & 16.290 &   & $\lambda_{2} \notin \Nat$ \\
    	& 125 & 62 & 30 & 31 & 150 & 70.333 &   &   & $d_{2} \notin \Nat$ \\
    	& 126 & 25 & 8 & 4 & 149 & 38.430 &   &   & $d_{2} \notin \Nat$ \\
    	& 126 & 45 & 12 & 18 & 149 & 55.342 &   &   & $d_{2} \notin \Nat$ \\
    	& 126 & 50 & 13 & 24 & 149 & 59.570 &   &   & $d_{2} \notin \Nat$ \\
    	& 126 & 65 & 28 & 39 & 149 & 72.255 &   &   & $d_{2} \notin \Nat$ \\
    	& 130 & 48 & 20 & 16 & 145 & 54.621 &   &   & $d_{2} \notin \Nat$ \\
    	& 133 & 24 & 5 & 4 & 142 & 29.577 &   &   & $d_{2} \notin \Nat$ \\
    	& 133 & 32 & 6 & 8 & 142 & 37.070 &   &   & $d_{2} \notin \Nat$ \\
    	& 133 & 44 & 15 & 14 & 142 & 48.310 &   &   & $d_{2} \notin \Nat$ \\
    	& 135 & 64 & 28 & 32 & 140 & 65.714 &   &   & $d_{2} \notin \Nat$ \\
    	& 136 & 30 & 8 & 6 & 139 & 31.770 &   &   & $d_{2} \notin \Nat$ \\
    	& 136 & 30 & 15 & 4 & 139 & 31.770 &   &   & $d_{2} \notin \Nat$ \\
    	& 136 & 60 & 24 & 28 & 139 & 61.122 &   &   & $d_{2} \notin \Nat$ \\
    	& 136 & 63 & 30 & 28 & 139 & 64.058 &   &   & $d_{2} \notin \Nat$ \\
    	\hline$K_1$ & 1 & 0 & - & - & 274 & 111.591 &   &   & $d_{2} \notin \Nat$ \\
    	$K_{2}$ & 2 & 1 & 0 & - & 273 & 111.187 &   &   & $d_{2} \notin \Nat$ \\
    	$K_{3}$ & 3 & 2 & 1 & - & 272 & 110.787 &   &   & $d_{2} \notin \Nat$ \\
    	$K_{4}$ & 4 & 3 & 2 & - & 271 & 110.391 &   &   & $d_{2} \notin \Nat$ \\
    	$K_{5}$ & 5 & 4 & 3 & - & 270 & 110 & 29.473 &   & $\lambda_{2} \notin \Nat$ \\
    	\hline$\overline{K_{2} }$ & 2 & 0 & - & 0 & 273 & 111.179 &   &   & $d_{2} \notin \Nat$ \\
    	$\overline{K_{3} }$ & 3 & 0 & - & 0 & 272 & 110.765 &   &   & $d_{2} \notin \Nat$ \\
    	$\overline{K_{4} }$ & 4 & 0 & - & 0 & 271 & 110.347 &   &   & $d_{2} \notin \Nat$ \\
    	$\overline{K_{5} }$ & 5 & 0 & - & 0 & 270 & 109.926 &   &   & $d_{2} \notin \Nat$ \\
    	$\overline{K_{6} }$ & 6 & 0 & - & 0 & 269 & 109.502 &   &   & $d_{2} \notin \Nat$ \\
    	$\overline{K_{7} }$ & 7 & 0 & - & 0 & 268 & 109.075 &   &   & $d_{2} \notin \Nat$ \\
    	$\overline{K_{8} }$ & 8 & 0 & - & 0 & 267 & 108.644 &   &   & $d_{2} \notin \Nat$ \\
    	$\overline{K_{9} }$ & 9 & 0 & - & 0 & 266 & 108.211 &   &   & $d_{2} \notin \Nat$ \\
    	$\overline{K_{10} }$ & 10 & 0 & - & 0 & 265 & 107.774 &   &   & $d_{2} \notin \Nat$ \\
    	$\overline{K_{11} }$ & 11 & 0 & - & 0 & 264 & 107.333 &   &   & $d_{2} \notin \Nat$ \\
    	$\overline{K_{12} }$ & 12 & 0 & - & 0 & 263 & 106.890 &   &   & $d_{2} \notin \Nat$ \\
    	$\overline{K_{13} }$ & 13 & 0 & - & 0 & 262 & 106.443 &   &   & $d_{2} \notin \Nat$ \\
    	$\overline{K_{14} }$ & 14 & 0 & - & 0 & 261 & 105.992 &   &   & $d_{2} \notin \Nat$ \\
    	$\overline{K_{15} }$ & 15 & 0 & - & 0 & 260 & 105.538 &   &   & $d_{2} \notin \Nat$ \\
    	$\overline{K_{16} }$ & 16 & 0 & - & 0 & 259 & 105.081 &   &   & $d_{2} \notin \Nat$ \\
    	$\overline{K_{17} }$ & 17 & 0 & - & 0 & 258 & 104.620 &   &   & $d_{2} \notin \Nat$ \\
    	$\overline{K_{18} }$ & 18 & 0 & - & 0 & 257 & 104.156 &   &   & $d_{2} \notin \Nat$ \\
    	$\overline{K_{19} }$ & 19 & 0 & - & 0 & 256 & 103.688 &   &   & $d_{2} \notin \Nat$ \\
    	$\overline{K_{20} }$ & 20 & 0 & - & 0 & 255 & 103.216 &   &   & $d_{2} \notin \Nat$ \\
    	$\overline{K_{21} }$ & 21 & 0 & - & 0 & 254 & 102.740 &   &   & $d_{2} \notin \Nat$ \\
    	$\overline{K_{22} }$ & 22 & 0 & - & 0 & 253 & 102.261 &   &   & $d_{2} \notin \Nat$ \\
    	\hline$2K_{2}$ & 4 & 1 & 0 & 0 & 271 & 110.362 &   &   & $d_{2} \notin \Nat$ \\
    	$3K_{2}$ & 6 & 1 & 0 & 0 & 269 & 109.524 &   &   & $d_{2} \notin \Nat$ \\
    	$4K_{2}$ & 8 & 1 & 0 & 0 & 267 & 108.674 &   &   & $d_{2} \notin \Nat$ \\
    	$5K_{2}$ & 10 & 1 & 0 & 0 & 265 & 107.811 &   &   & $d_{2} \notin \Nat$ \\
    	$6K_{2}$ & 12 & 1 & 0 & 0 & 263 & 106.935 &   &   & $d_{2} \notin \Nat$ \\
    	$7K_{2}$ & 14 & 1 & 0 & 0 & 261 & 106.046 &   &   & $d_{2} \notin \Nat$ \\
    	$8K_{2}$ & 16 & 1 & 0 & 0 & 259 & 105.143 &   &   & $d_{2} \notin \Nat$ \\
    	$9K_{2}$ & 18 & 1 & 0 & 0 & 257 & 104.226 &   &   & $d_{2} \notin \Nat$ \\
    	$10K_{2}$ & 20 & 1 & 0 & 0 & 255 & 103.294 &   &   & $d_{2} \notin \Nat$ \\
    	$11K_{2}$ & 22 & 1 & 0 & 0 & 253 & 102.348 &   &   & $d_{2} \notin \Nat$ \\
    	$12K_{2}$ & 24 & 1 & 0 & 0 & 251 & 101.386 &   &   & $d_{2} \notin \Nat$ \\
    	$13K_{2}$ & 26 & 1 & 0 & 0 & 249 & 100.410 &   &   & $d_{2} \notin \Nat$ \\
    	$14K_{2}$ & 28 & 1 & 0 & 0 & 247 & 99.417 &   &   & $d_{2} \notin \Nat$ \\
    	$15K_{2}$ & 30 & 1 & 0 & 0 & 245 & 98.408 &   &   & $d_{2} \notin \Nat$ \\
    	$16K_{2}$ & 32 & 1 & 0 & 0 & 243 & 97.383 &   &   & $d_{2} \notin \Nat$ \\
    	$17K_{2}$ & 34 & 1 & 0 & 0 & 241 & 96.340 &   &   & $d_{2} \notin \Nat$ \\
    	$18K_{2}$ & 36 & 1 & 0 & 0 & 239 & 95.280 &   &   & $d_{2} \notin \Nat$ \\
    	$19K_{2}$ & 38 & 1 & 0 & 0 & 237 & 94.203 &   &   & $d_{2} \notin \Nat$ \\
    	$20K_{2}$ & 40 & 1 & 0 & 0 & 235 & 93.106 &   &   & $d_{2} \notin \Nat$ \\
    	$21K_{2}$ & 42 & 1 & 0 & 0 & 233 & 91.991 &   &   & $d_{2} \notin \Nat$ \\
    	$22K_{2}$ & 44 & 1 & 0 & 0 & 231 & 90.857 &   &   & $d_{2} \notin \Nat$ \\
    	$2K_{3}$ & 6 & 2 & 1 & 0 & 269 & 109.546 &   &   & $d_{2} \notin \Nat$ \\
    	$3K_{3}$ & 9 & 2 & 1 & 0 & 266 & 108.278 &   &   & $d_{2} \notin \Nat$ \\
    	$4K_{3}$ & 12 & 2 & 1 & 0 & 263 & 106.981 &   &   & $d_{2} \notin \Nat$ \\
    	$5K_{3}$ & 15 & 2 & 1 & 0 & 260 & 105.654 &   &   & $d_{2} \notin \Nat$ \\
    	$6K_{3}$ & 18 & 2 & 1 & 0 & 257 & 104.296 &   &   & $d_{2} \notin \Nat$ \\
    	$7K_{3}$ & 21 & 2 & 1 & 0 & 254 & 102.906 &   &   & $d_{2} \notin \Nat$ \\
    	$8K_{3}$ & 24 & 2 & 1 & 0 & 251 & 101.482 &   &   & $d_{2} \notin \Nat$ \\
    	$9K_{3}$ & 27 & 2 & 1 & 0 & 248 & 100.024 &   &   & $d_{2} \notin \Nat$ \\
    	$10K_{3}$ & 30 & 2 & 1 & 0 & 245 & 98.531 &   &   & $d_{2} \notin \Nat$ \\
    	$11K_{3}$ & 33 & 2 & 1 & 0 & 242 & 97 & 25.442 &   & $\lambda_{2} \notin \Nat$ \\
    	$12K_{3}$ & 36 & 2 & 1 & 0 & 239 & 95.431 &   &   & $d_{2} \notin \Nat$ \\
    	$13K_{3}$ & 39 & 2 & 1 & 0 & 236 & 93.822 &   &   & $d_{2} \notin \Nat$ \\
    	$14K_{3}$ & 42 & 2 & 1 & 0 & 233 & 92.172 &   &   & $d_{2} \notin \Nat$ \\
    	$15K_{3}$ & 45 & 2 & 1 & 0 & 230 & 90.478 &   &   & $d_{2} \notin \Nat$ \\
    	$16K_{3}$ & 48 & 2 & 1 & 0 & 227 & 88.740 &   &   & $d_{2} \notin \Nat$ \\
    	$17K_{3}$ & 51 & 2 & 1 & 0 & 224 & 86.955 &   &   & $d_{2} \notin \Nat$ \\
    	$18K_{3}$ & 54 & 2 & 1 & 0 & 221 & 85.122 &   &   & $d_{2} \notin \Nat$ \\
    	$19K_{3}$ & 57 & 2 & 1 & 0 & 218 & 83.239 &   &   & $d_{2} \notin \Nat$ \\
    	$20K_{3}$ & 60 & 2 & 1 & 0 & 215 & 81.302 &   &   & $d_{2} \notin \Nat$ \\
    	$21K_{3}$ & 63 & 2 & 1 & 0 & 212 & 79.311 &   &   & $d_{2} \notin \Nat$ \\
    	$22K_{3}$ & 66 & 2 & 1 & 0 & 209 & 77.263 &   &   & $d_{2} \notin \Nat$ \\
    	$2K_{4}$ & 8 & 3 & 2 & 0 & 267 & 108.734 &   &   & $d_{2} \notin \Nat$ \\
    	$3K_{4}$ & 12 & 3 & 2 & 0 & 263 & 107.027 &   &   & $d_{2} \notin \Nat$ \\
    	$4K_{4}$ & 16 & 3 & 2 & 0 & 259 & 105.266 &   &   & $d_{2} \notin \Nat$ \\
    	$5K_{4}$ & 20 & 3 & 2 & 0 & 255 & 103.451 &   &   & $d_{2} \notin \Nat$ \\
    	$6K_{4}$ & 24 & 3 & 2 & 0 & 251 & 101.578 &   &   & $d_{2} \notin \Nat$ \\
    	$7K_{4}$ & 28 & 3 & 2 & 0 & 247 & 99.644 &   &   & $d_{2} \notin \Nat$ \\
    	$8K_{4}$ & 32 & 3 & 2 & 0 & 243 & 97.646 &   &   & $d_{2} \notin \Nat$ \\
    	$9K_{4}$ & 36 & 3 & 2 & 0 & 239 & 95.582 &   &   & $d_{2} \notin \Nat$ \\
    	$10K_{4}$ & 40 & 3 & 2 & 0 & 235 & 93.447 &   &   & $d_{2} \notin \Nat$ \\
    	$11K_{4}$ & 44 & 3 & 2 & 0 & 231 & 91.238 &   &   & $d_{2} \notin \Nat$ \\
    	$12K_{4}$ & 48 & 3 & 2 & 0 & 227 & 88.952 &   &   & $d_{2} \notin \Nat$ \\
    	$13K_{4}$ & 52 & 3 & 2 & 0 & 223 & 86.583 &   &   & $d_{2} \notin \Nat$ \\
    	$14K_{4}$ & 56 & 3 & 2 & 0 & 219 & 84.128 &   &   & $d_{2} \notin \Nat$ \\
    	$15K_{4}$ & 60 & 3 & 2 & 0 & 215 & 81.581 &   &   & $d_{2} \notin \Nat$ \\
    	$16K_{4}$ & 64 & 3 & 2 & 0 & 211 & 78.938 &   &   & $d_{2} \notin \Nat$ \\
    	$17K_{4}$ & 68 & 3 & 2 & 0 & 207 & 76.193 &   &   & $d_{2} \notin \Nat$ \\
    	$18K_{4}$ & 72 & 3 & 2 & 0 & 203 & 73.340 &   &   & $d_{2} \notin \Nat$ \\
    	$19K_{4}$ & 76 & 3 & 2 & 0 & 199 & 70.372 &   &   & $d_{2} \notin \Nat$ \\
    	$20K_{4}$ & 80 & 3 & 2 & 0 & 195 & 67.282 &   &   & $d_{2} \notin \Nat$ \\
    	$21K_{4}$ & 84 & 3 & 2 & 0 & 191 & 64.063 &   &   & $d_{2} \notin \Nat$ \\
    	$22K_{4}$ & 88 & 3 & 2 & 0 & 187 & 60.706 &   &   & $d_{2} \notin \Nat$ \\
    	$2K_{5}$ & 10 & 4 & 3 & 0 & 265 & 107.925 &   &   & $d_{2} \notin \Nat$ \\
    	$3K_{5}$ & 15 & 4 & 3 & 0 & 260 & 105.769 &   &   & $d_{2} \notin \Nat$ \\
    	$4K_{5}$ & 20 & 4 & 3 & 0 & 255 & 103.529 &   &   & $d_{2} \notin \Nat$ \\
    	$5K_{5}$ & 25 & 4 & 3 & 0 & 250 & 101.200 &   &   & $d_{2} \notin \Nat$ \\
    	$6K_{5}$ & 30 & 4 & 3 & 0 & 245 & 98.776 &   &   & $d_{2} \notin \Nat$ \\
    	$7K_{5}$ & 35 & 4 & 3 & 0 & 240 & 96.250 &   &   & $d_{2} \notin \Nat$ \\
    	$8K_{5}$ & 40 & 4 & 3 & 0 & 235 & 93.617 &   &   & $d_{2} \notin \Nat$ \\
    	$9K_{5}$ & 45 & 4 & 3 & 0 & 230 & 90.870 &   &   & $d_{2} \notin \Nat$ \\
    	$10K_{5}$ & 50 & 4 & 3 & 0 & 225 & 88 & 22.091 &   & $\lambda_{2} \notin \Nat$ \\
    	$11K_{5}$ & 55 & 4 & 3 & 0 & 220 & 85 & 20.788 &   & $\lambda_{2} \notin \Nat$ \\
    	$12K_{5}$ & 60 & 4 & 3 & 0 & 215 & 81.860 &   &   & $d_{2} \notin \Nat$ \\
    	$13K_{5}$ & 65 & 4 & 3 & 0 & 210 & 78.571 &   &   & $d_{2} \notin \Nat$ \\
    	$14K_{5}$ & 70 & 4 & 3 & 0 & 205 & 75.122 &   &   & $d_{2} \notin \Nat$ \\
    	$15K_{5}$ & 75 & 4 & 3 & 0 & 200 & 71.500 &   &   & $d_{2} \notin \Nat$ \\
    	$16K_{5}$ & 80 & 4 & 3 & 0 & 195 & 67.692 &   &   & $d_{2} \notin \Nat$ \\
    	$17K_{5}$ & 85 & 4 & 3 & 0 & 190 & 63.684 &   &   & $d_{2} \notin \Nat$ \\
    	$18K_{5}$ & 90 & 4 & 3 & 0 & 185 & 59.459 &   &   & $d_{2} \notin \Nat$ \\
    	$19K_{5}$ & 95 & 4 & 3 & 0 & 180 & 55 & -0.055 &   & $\lambda_{2} \notin \Nat$ \\
    	$20K_{5}$ & 100 & 4 & 3 & 0 & 175 & 50.286 &   &   & $d_{2} \notin \Nat$ \\
    	$21K_{5}$ & 105 & 4 & 3 & 0 & 170 & 45.294 &   &   & $d_{2} \notin \Nat$ \\
    	$22K_{5}$ & 110 & 4 & 3 & 0 & 165 & 40 & -22.200 &   & $\lambda_{2} \notin \Nat$ \\
    	\hline$\overline{2K_{2} }$ & 4 & 2 & 0 & 2 & 271 & 110.376 &   &   & $d_{2} \notin \Nat$ \\
    	$\overline{2K_{3} }$ & 6 & 3 & 0 & 3 & 269 & 109.569 &   &   & $d_{2} \notin \Nat$ \\
    	$\overline{2K_{4} }$ & 8 & 4 & 0 & 4 & 267 & 108.764 &   &   & $d_{2} \notin \Nat$ \\
    	$\overline{2K_{5} }$ & 10 & 5 & 0 & 5 & 265 & 107.962 &   &   & $d_{2} \notin \Nat$ \\
    	$\overline{2K_{6} }$ & 12 & 6 & 0 & 6 & 263 & 107.163 &   &   & $d_{2} \notin \Nat$ \\
    	$\overline{2K_{7} }$ & 14 & 7 & 0 & 7 & 261 & 106.368 &   &   & $d_{2} \notin \Nat$ \\
    	$\overline{2K_{8} }$ & 16 & 8 & 0 & 8 & 259 & 105.575 &   &   & $d_{2} \notin \Nat$ \\
    	$\overline{2K_{9} }$ & 18 & 9 & 0 & 9 & 257 & 104.786 &   &   & $d_{2} \notin \Nat$ \\
    	$\overline{2K_{10} }$ & 20 & 10 & 0 & 10 & 255 & 104 & 27.919 &   & $\lambda_{2} \notin \Nat$ \\
    	$\overline{2K_{11} }$ & 22 & 11 & 0 & 11 & 253 & 103.217 &   &   & $d_{2} \notin \Nat$ \\
    	$\overline{2K_{12} }$ & 24 & 12 & 0 & 12 & 251 & 102.438 &   &   & $d_{2} \notin \Nat$ \\
    	$\overline{2K_{13} }$ & 26 & 13 & 0 & 13 & 249 & 101.663 &   &   & $d_{2} \notin \Nat$ \\
    	$\overline{2K_{14} }$ & 28 & 14 & 0 & 14 & 247 & 100.891 &   &   & $d_{2} \notin \Nat$ \\
    	$\overline{2K_{15} }$ & 30 & 15 & 0 & 15 & 245 & 100.122 &   &   & $d_{2} \notin \Nat$ \\
    	$\overline{2K_{16} }$ & 32 & 16 & 0 & 16 & 243 & 99.358 &   &   & $d_{2} \notin \Nat$ \\
    	$\overline{2K_{17} }$ & 34 & 17 & 0 & 17 & 241 & 98.598 &   &   & $d_{2} \notin \Nat$ \\
    	$\overline{2K_{18} }$ & 36 & 18 & 0 & 18 & 239 & 97.841 &   &   & $d_{2} \notin \Nat$ \\
    	$\overline{2K_{19} }$ & 38 & 19 & 0 & 19 & 237 & 97.089 &   &   & $d_{2} \notin \Nat$ \\
    	$\overline{2K_{20} }$ & 40 & 20 & 0 & 20 & 235 & 96.340 &   &   & $d_{2} \notin \Nat$ \\
    	$\overline{2K_{21} }$ & 42 & 21 & 0 & 21 & 233 & 95.597 &   &   & $d_{2} \notin \Nat$ \\
    	$\overline{2K_{22} }$ & 44 & 22 & 0 & 22 & 231 & 94.857 &   &   & $d_{2} \notin \Nat$ \\
    	$\overline{3K_{2} }$ & 6 & 4 & 2 & 4 & 269 & 109.591 &   &   & $d_{2} \notin \Nat$ \\
    	$\overline{3K_{3} }$ & 9 & 6 & 3 & 6 & 266 & 108.414 &   &   & $d_{2} \notin \Nat$ \\
    	$\overline{3K_{4} }$ & 12 & 8 & 4 & 8 & 263 & 107.255 &   &   & $d_{2} \notin \Nat$ \\
    	$\overline{3K_{5} }$ & 15 & 10 & 5 & 10 & 260 & 106.115 &   &   & $d_{2} \notin \Nat$ \\
    	$\overline{3K_{6} }$ & 18 & 12 & 6 & 12 & 257 & 104.996 &   &   & $d_{2} \notin \Nat$ \\
    	$\overline{3K_{7} }$ & 21 & 14 & 7 & 14 & 254 & 103.898 &   &   & $d_{2} \notin \Nat$ \\
    	$\overline{3K_{8} }$ & 24 & 16 & 8 & 16 & 251 & 102.821 &   &   & $d_{2} \notin \Nat$ \\
    	$\overline{3K_{9} }$ & 27 & 18 & 9 & 18 & 248 & 101.766 &   &   & $d_{2} \notin \Nat$ \\
    	$\overline{3K_{10} }$ & 30 & 20 & 10 & 20 & 245 & 100.735 &   &   & $d_{2} \notin \Nat$ \\
    	$\overline{3K_{11} }$ & 33 & 22 & 11 & 22 & 242 & 99.727 &   &   & $d_{2} \notin \Nat$ \\
    	$\overline{3K_{12} }$ & 36 & 24 & 12 & 24 & 239 & 98.745 &   &   & $d_{2} \notin \Nat$ \\
    	$\overline{3K_{13} }$ & 39 & 26 & 13 & 26 & 236 & 97.788 &   &   & $d_{2} \notin \Nat$ \\
    	$\overline{3K_{14} }$ & 42 & 28 & 14 & 28 & 233 & 96.858 &   &   & $d_{2} \notin \Nat$ \\
    	$\overline{3K_{15} }$ & 45 & 30 & 15 & 30 & 230 & 95.957 &   &   & $d_{2} \notin \Nat$ \\
    	$\overline{3K_{16} }$ & 48 & 32 & 16 & 32 & 227 & 95.084 &   &   & $d_{2} \notin \Nat$ \\
    	$\overline{3K_{17} }$ & 51 & 34 & 17 & 34 & 224 & 94.241 &   &   & $d_{2} \notin \Nat$ \\
    	$\overline{3K_{18} }$ & 54 & 36 & 18 & 36 & 221 & 93.430 &   &   & $d_{2} \notin \Nat$ \\
    	$\overline{3K_{19} }$ & 57 & 38 & 19 & 38 & 218 & 92.651 &   &   & $d_{2} \notin \Nat$ \\
    	$\overline{3K_{20} }$ & 60 & 40 & 20 & 40 & 215 & 91.907 &   &   & $d_{2} \notin \Nat$ \\
    	$\overline{3K_{21} }$ & 63 & 42 & 21 & 42 & 212 & 91.198 &   &   & $d_{2} \notin \Nat$ \\
    	$\overline{3K_{22} }$ & 66 & 44 & 22 & 44 & 209 & 90.526 &   &   & $d_{2} \notin \Nat$ \\
    	$\overline{4K_{2} }$ & 8 & 6 & 4 & 6 & 267 & 108.824 &   &   & $d_{2} \notin \Nat$ \\
    	$\overline{4K_{3} }$ & 12 & 9 & 6 & 9 & 263 & 107.300 &   &   & $d_{2} \notin \Nat$ \\
    	$\overline{4K_{4} }$ & 16 & 12 & 8 & 12 & 259 & 105.822 &   &   & $d_{2} \notin \Nat$ \\
    	$\overline{4K_{5} }$ & 20 & 15 & 10 & 15 & 255 & 104.392 &   &   & $d_{2} \notin \Nat$ \\
    	$\overline{4K_{6} }$ & 24 & 18 & 12 & 18 & 251 & 103.012 &   &   & $d_{2} \notin \Nat$ \\
    	$\overline{4K_{7} }$ & 28 & 21 & 14 & 21 & 247 & 101.684 &   &   & $d_{2} \notin \Nat$ \\
    	$\overline{4K_{8} }$ & 32 & 24 & 16 & 24 & 243 & 100.412 &   &   & $d_{2} \notin \Nat$ \\
    	$\overline{4K_{9} }$ & 36 & 27 & 18 & 27 & 239 & 99.197 &   &   & $d_{2} \notin \Nat$ \\
    	$\overline{4K_{10} }$ & 40 & 30 & 20 & 30 & 235 & 98.043 &   &   & $d_{2} \notin \Nat$ \\
    	$\overline{4K_{11} }$ & 44 & 33 & 22 & 33 & 231 & 96.952 &   &   & $d_{2} \notin \Nat$ \\
    	$\overline{4K_{12} }$ & 48 & 36 & 24 & 36 & 227 & 95.930 &   &   & $d_{2} \notin \Nat$ \\
    	$\overline{4K_{13} }$ & 52 & 39 & 26 & 39 & 223 & 94.978 &   &   & $d_{2} \notin \Nat$ \\
    	$\overline{4K_{14} }$ & 56 & 42 & 28 & 42 & 219 & 94.100 &   &   & $d_{2} \notin \Nat$ \\
    	$\overline{4K_{15} }$ & 60 & 45 & 30 & 45 & 215 & 93.302 &   &   & $d_{2} \notin \Nat$ \\
    	$\overline{5K_{2} }$ & 10 & 8 & 6 & 8 & 265 & 108.075 &   &   & $d_{2} \notin \Nat$ \\
    	$\overline{5K_{3} }$ & 15 & 12 & 9 & 12 & 260 & 106.231 &   &   & $d_{2} \notin \Nat$ \\
    	$\overline{5K_{4} }$ & 20 & 16 & 12 & 16 & 255 & 104.471 &   &   & $d_{2} \notin \Nat$ \\
    	$\overline{5K_{5} }$ & 25 & 20 & 15 & 20 & 250 & 102.800 &   &   & $d_{2} \notin \Nat$ \\
    	$\overline{5K_{6} }$ & 30 & 24 & 18 & 24 & 245 & 101.224 &   &   & $d_{2} \notin \Nat$ \\
    	$\overline{5K_{7} }$ & 35 & 28 & 21 & 28 & 240 & 99.750 &   &   & $d_{2} \notin \Nat$ \\
    	$\overline{5K_{8} }$ & 40 & 32 & 24 & 32 & 235 & 98.383 &   &   & $d_{2} \notin \Nat$ \\
    	$\overline{5K_{9} }$ & 45 & 36 & 27 & 36 & 230 & 97.130 &   &   & $d_{2} \notin \Nat$ \\
    	$\overline{5K_{10} }$ & 50 & 40 & 30 & 40 & 225 & 96 & 25 & 52.500 & $\mu_{2} \notin \Nat$ \\
    \end{longtable}

\end{document}